\newcommand{\rrvert}{\vert}
\newcommand{\llvert}{\vert}
\newcommand{\eqref}[1]{(\ref{#1})}
\newtheorem{thmm}{Theorem}[section]
\newtheorem{prop}[thmm]{Proposition}
\newtheorem{lem}[thmm]{Lemma}
\newcommand{\A}{\mathcal{A}}
\newcommand{\jdt}{jeu de taquin\ }
\newcommand{\prob}{\operatorname{Prob}}
\newcommand{\demph}{\emph}
\newcommand{\partitions}[1]{\mathbb{Y}_{#1}}
\newcommand{\allpartitions}{\mathbb{Y}}
\newcommand{\R}{\mathbb{R}}
\newcommand{\N}{\mathbb{N}}
\newcommand{\Z}{\mathbb{Z}}
\newcommand{\C}{\mathbb{C}}
\newcommand{\E}{\mathbb{E}}
\newcommand{\convdist}{\mathop{\rightarrow}^{\mathcal{D}}} %
\newcommand{\equalindist}{\stackrel{\mathcal{D}}{=}} 
\newcommand{\Young}{\partitions}
\newcommand{\jpath}{\mathbf{p}}
\newcommand{\jpathlazy}{\mathbf{q}}
\newcommand{\oproduct}{\circ}
\newcommand{\plancherel}{\mathsf{P}}
\newcommand{\projection}{q}
\newcommand{\Rec}{\operatorname{Ins}}
\newcommand{\Tr}{\operatorname{Tr}}
\newcommand{\Var}{\operatorname{Var}}
\newcommand{\RSK}{\operatorname{RSK}}
\newcommand{\SC}{\mathrm{SC}}
\newcommand{\SClaw}{\mathcal{L}_{\SC}}
\newcommand{\FSC}{F_{\SC}}
\newcommand{\Id}{\operatorname{Id}}
\newcommand{\regular}{\operatorname{regular}}
\newcommand{\trivial}{\operatorname{trivial}}
\newcommand{\End}{\operatorname{End}}
\newcommand{\SYT}{\operatorname{SYT}}
\newcommand{\strangeSchur}{\widetilde{S}}
\begin{document}
\begin{frontmatter}

\title{Jeu de taquin dynamics on infinite Young tableaux and
second class particles}
\runtitle{Jeu de taquin dynamics on infinite Young tableaux}

\begin{aug}
\author[A]{\fnms{Dan} \snm{Romik}\corref{}\thanksref{T2}\ead[label=e1]{romik@math.ucdavis.edu}}
\and
\author[B]{\fnms{Piotr} \snm{\'Sniady}\thanksref{T3}\ead[label=e2]{Piotr.Sniady@math.uni.wroc.pl}}
\thankstext{T2}{Supported by NSF Grant DMS-09-55584.}
\thankstext{T3}{Supported by the Polish Ministry of Higher Education
research Grant N N201 364436 for the years 2009--2012.
In the initial phase of this research, Piotr \'Sniady was a holder of a
fellowship of Alexander-von-Humboldt-Stiftung.}
\runauthor{D.~Romik and P.~\'Sniady}
\affiliation{University of California, Davis, and Polish Academy of
Sciences and University of Wroc\l{}aw}

\address[A]{Department of Mathematics\\
University of California, Davis\\
One Shields Avenue \\
Davis, California 95616\\
USA\\
\printead{e1}}

\address[B]{Institute of Mathematics\\
Polish Academy of Sciences\\
ul.~\'Sniadeckich 8\\
00-956 Warszawa\\
Poland\\
and\\
Institute of Mathematics\\
University of Wroc\l{}aw\\
pl.~Grunwaldzki~2/4\\
50-384 Wroc\l{}aw\\
Poland\\
\printead{e2}}
\end{aug}

\received{\smonth{11} \syear{2012}}
\revised{\smonth{6} \syear{2013}}

%
\begin{abstract}
We study an infinite version of the ``\emph{jeu de taquin}'' sliding
game, which can be thought of as a natural measure-preserving
transformation on the set of infinite Young tableaux equipped with the
Plancherel probability measure. We use methods from representation
theory to show that the Robinson--Schensted--Knuth ($\RSK$)
algorithm gives an isomorphism between this measure-preserving
dynamical system and the one-sided shift dynamics on a sequence of
independent and identically distributed random variables distributed
uniformly on the unit interval. We also show that the \jdt paths
induced by the transformation are asymptotically straight lines
emanating from the origin in a random direction whose distribution is
computed explicitly, and show that this result can be interpreted as a
statement on the limiting speed of a second-class particle in the \emph
{Plancherel-TASEP} particle system (a variant of the Totally Asymmetric
Simple Exclusion Process associated with Plancherel growth), in analogy
with earlier results for second class particles in the ordinary TASEP.
\end{abstract}

%
\begin{keyword}[class=AMS]
\kwd[Primary ]{60C05}
\kwd[; secondary ]{60K35}
\kwd{82C22}
\kwd{05E10}
\kwd{37A05}
\end{keyword}
\begin{keyword}
\kwd{Jeu de taquin}
\kwd{Young tableau}
\kwd{Plancherel measure}
\kwd{TASEP}
\kwd{exclusion process}
\kwd{second class particle}
\kwd{dynamical system}
\kwd{isomorphism of measure preserving systems}
\kwd{representation theory of symmetric groups}
\end{keyword}

\end{frontmatter}

\section{Introduction}\label{sec1}

\subsection{Overview: Jeu de taquin on infinite Young tableaux}
The goal of this paper is to study in a new probabilistic framework a
combinatorial process that is well known to algebraic combinatorialists
and representation theorists. This process is known as the \emph{\jdt}
(literally ``teasing game'') or sliding game. Its remarkable properties
have been studied since its introduction in a seminal paper by \citet{Schutzenberger1977}. Its main importance is as a tool for studying the
combinatorics of permutations and Young tableaux, especially with
regards to the Robinson--Schensted--Knuth ($\RSK$) algorithm, which is
a fundamental object of algebraic combinatorics. However, the existing
\jdt theory deals exclusively with the case of \emph{finite}
permutations and tableaux. A main new idea of the current paper is to
consider the implications of ``sliding theory'' for \emph{infinite}
tableaux. As the reader will discover below, this will lead us to some
important new insights into the asymptotic theory of Young tableaux, as
well as to unexpected new connections to ergodic theory and to
well-known random processes of contemporary interest in probability
theory, namely the Totally Asymmetric Simple Exclusion Process (TASEP),
the corner growth model and directed last-passage percolation.

Our study will focus on a certain measure-preserving dynamical system,
that is, a quadruple $\mathfrak{J}=(\Omega, \mathcal{F}, \plancherel,
J)$, where $(\Omega, \mathcal{F}, \plancherel)$ is a probability space
and $J\dvtx \Omega\to\Omega$ is a measure-preserving transformation. The
sample space $\Omega$ will be the set of \demph{infinite Young
tableaux}; the probability measure $\plancherel$ will be the \demph
{Plancherel measure}, and the measure-preserving transformation $J$
will be the \demph{jeu de taquin map}. To define these concepts, we
need to first recall some basic notions from combinatorics.

\subsection{Basic definitions}
\subsubsection{Young diagrams and Young tableaux}
Let $n\ge1$ be an integer. An \demph{integer partition} (or just
\demph
{partition}) of $n$ is a representation of $n$ in the form $n=\lambda
(1)+\lambda(2)+\cdots+\lambda(k)$, where $\lambda(1)\ge\cdots\ge
\lambda
(k)>0$ are integers. Usually the vector $\lambda=(\lambda(1),\ldots
,\lambda(k))$ is used to denote the partition. We denote the set of
partitions of $n$ by $\partitions{n}$ (where we also define
$\partitions
{n}$ for $n=0$ as the singleton set consisting of the ``empty
partition,'' denoted by $\varnothing$), and the set of all partitions by
$\allpartitions=\bigcup_{n=0}^\infty\partitions{n}$. If $\lambda
\in
\partitions{n}$ we call $n$ the \demph{size} of $\lambda$ and denote
$|\lambda|=n$.

Given a partition $\lambda=(\lambda(1),\ldots,\lambda(k))$ of $n$, we
associate with it a \demph{Young diagram}, which is a diagram of $k$
left-justified rows of unit squares (also called boxes or cells) in
which the $j$th row has $\lambda(j)$ boxes. We use the French
convention of drawing the Young diagrams from the bottom up; see
Figure~\ref{fig-youngdiag}. Since Young diagrams are an equivalent way
of representing integer partitions, we refer to a Young diagram
interchangeably with its associated partition.

\begin{figure}

\includegraphics{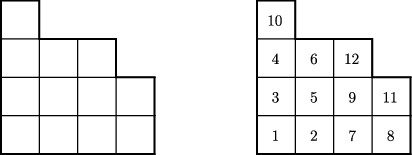}

\caption{The Young diagram $\lambda=(4,4,3,1)$ and a Young tableau of
shape $\lambda$.}
\label{fig-youngdiag}
\end{figure}

The set $\allpartitions$ of Young diagrams forms in a natural way the
vertex set of a directed graph called the \demph{Young graph} (or
\demph
{Young lattice}), where we connect two diagrams $\lambda, \nu$ by a
directed edge if $|\nu|=|\lambda|+1$ and $\nu$ can be obtained from
$\lambda$ by the addition of a single box; see Figure~\ref{fig-younggraph}. We denote the adjacency relation in this graph by
$\lambda\nearrow\nu$.

\begin{figure}[b]

\includegraphics{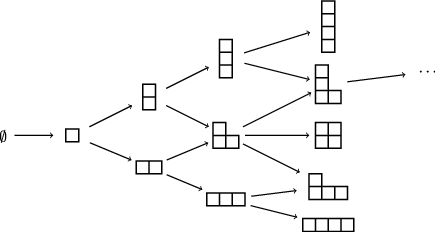}

\caption{The Young graph.}
\label{fig-younggraph}
\end{figure}

Given a Young diagram $\lambda$ of size $n$, an \demph{increasing
tableau} of shape $\lambda$ is a filling of the boxes of $\lambda$ with
some distinct real numbers $x_1,\ldots,x_n$ such that the numbers along
each row and column are in increasing order. A \demph{Young tableau}
(also called \demph{standard Young tableau} or \demph{standard
tableau}) of shape $\lambda$ is an increasing tableau of shape
$\lambda
$ where the numbers filling it are exactly $1,\ldots,n$. The set of
standard Young tableaux of shape $\lambda$ will be denoted by $\SYT
_\lambda$. One useful way of thinking about these objects is that a
Young tableau $t$ of shape $\lambda$ encodes (bijectively) a path in
the Young graph
%
\begin{equation}
\label{eq:younggraph-path} \varnothing= \lambda_0 \nearrow\lambda_1
\nearrow\cdots\nearrow \lambda_n = \lambda
\end{equation}
starting with the empty diagram and ending at $\lambda$. The way the
encoding works is that the $k$th diagram $\lambda_k$ in the path is the
Young diagram consisting of these boxes of $\lambda$ which contain a
number $\le k$. Going in the opposite direction, given the path~\eqref
{eq:younggraph-path} one can reconstruct the Young tableau by writing
the number $k$ in a given box if that box was added to $\lambda_{k-1}$
to obtain $\lambda_k$. The Young tableau $t$ constructed in this way is
referred to as the \demph{recording tableau} of the sequence \eqref
{eq:younggraph-path}.

\subsubsection{Plancherel measure}
Denote by $f^\lambda$ the number of standard Young tableaux of shape
$\lambda$. It is well known that
\[
n! = \sum_{\lambda\in\partitions{n}} \bigl(f^\lambda
\bigr)^2,
\]
a fact easily explained by the $\RSK$ algorithm [\citet{Fulton1997}, page 52].
Thus, if we define a measure $\plancherel_n$ on $\partitions{n}$ by setting
%
\begin{equation}
\label{eq:plancherel} \plancherel_n(\lambda) = \frac{(f^\lambda)^2}{n!} \qquad(\lambda
\in \partitions{n}),
\end{equation}
then $\plancherel_n$ is a probability measure. The measure
$\plancherel
_n$ is called the \demph{Plancherel measure} of order $n$. From the
viewpoint of representation theory, one can argue that this is one of
the most natural probability measures on $\partitions{n}$ since it
corresponds to taking a random irreducible component of the
left-regular representation of the symmetric group $S_n$, which is one
of the most natural and important representations;
see Section~\ref{subsec:random-young-diagram-to-representation} below.

Another well-known fact is that the Plancherel measures of all
different orders can be coupled to form a Markov chain
%
\begin{equation}
\label{eq:plancherel-growth} \varnothing= \Lambda_0 \nearrow\Lambda_1
\nearrow\Lambda_2 \nearrow \cdots,
\end{equation}
where each $\Lambda_n$ is a random Young diagram distributed according
to $\plancherel_n$. This is done by defining the conditional
distribution of $\Lambda_{n+1}$ given $\Lambda_n$ using the following
transition rule:
%
\begin{equation}
\label{eq:transition-rule} \prob(\Lambda_{n+1} = \nu | \Lambda_n =
\lambda) = %
\cases{\displaystyle\frac{f^\nu}{(n+1) f^\lambda}, & \quad$\mbox{if }\lambda \nearrow
\nu$, \vspace*{2pt}
\cr
0, &\quad $\mbox{otherwise,}$ } %
\end{equation}
for each $\lambda\in\partitions{n}, \nu\in\partitions{n+1}$. The fact
that the right-hand side of \eqref{eq:transition-rule} defines a valid
Markov transition matrix and that the push-forward of the measure
$\plancherel_n$ under this transition rule is $\plancherel_{n+1}$ is
explained by \citet{Kerov1999}, where the process $(\Lambda
_n)_{n=0}^\infty$ has been called the \demph{Plancherel growth process}
[see also \citet{Romik2013}, Section~1.19]. Here, we shall think of the
same process in a slightly different way by looking at the recording
tableau associated with the chain \eqref{eq:plancherel-growth}. Since
this is now an infinite path in the Young graph, the recording tableau
is a new kind of object which we call an \demph{infinite Young
tableau}. This is defined as an infinite matrix
$t=(t_{i,j})_{i,j=1}^\infty$ of natural numbers where each natural
number appears exactly once and the numbers along each row and column
are increasing.
Graphically, an infinite Young tableau can be visualized, similarly as
before, as a filling of the boxes of the ``infinite Young diagram''
occupying the entire first quadrant of the plane by the natural
numbers. We use the convention that the numbering of the boxes follows
the Cartesian coordinates, that is, $t_{i,j}$ is the number written in
the box $(i,j)$ which is in the $i$th column and $j$th row, with the
rows and columns numbered by the elements of the set $\N=\{1,2,\ldots\}$
of the natural numbers. Denote by $\Omega$ the set of infinite Young tableaux.

We remark that the usual (i.e., noninfinite) Young tableaux are very
useful in the \emph{representation theory of the symmetric groups}: one
can find a very natural base of the appropriate representation space
which is indexed by Young tableaux [\citet
{Ceccherini-SilbersteinScarabottiTolli2010}]. Thus, it should not come
as a surprise that infinite tableaux are very useful for studying
harmonic analysis on the \emph{infinite symmetric group} $S_\infty$;
see \citet{VershikKerov1981}.

Now, just as finite Young tableaux are in bijection with paths in the
Young graph leading up to a given Young diagram, the infinite Young
tableaux are similarly in bijection with those infinite paths in the
Young graph starting from the empty diagram that have the property that
any box is eventually included in some diagram of the path. We call an
infinite tableau corresponding to such an infinite path the \demph
{recording tableau} of the path, similarly to the case of finite paths.
Thus, under this bijection the Plancherel growth process \eqref
{eq:plancherel-growth} can be interpreted as a random infinite Young
tableau; that is, a probability measure on the set $\Omega$ of infinite
Young tableaux, equipped with its natural measurable structure, namely,
the minimal $\sigma$-algebra $\mathcal{F}$ of subsets of $\Omega$ such
that all the coordinate functions $t \mapsto t_{i,j}$ are measurable.
(Note that the Plancherel growth process almost surely has the property
of eventually filling all the boxes---for example, this follows
trivially from Theorem~\ref{thmm-plancherel-limitshape} below.)

We denote this probability measure on $(\Omega, \mathcal{F})$ by
$\plancherel$, and refer to it as the \demph{Plancherel measure of
infinite order}, or (where there is no risk of confusion) simply \demph
{Plancherel measure}.

\subsubsection{Jeu de taquin}
Given an infinite Young tableau $t=(t_{i,j})_{i,j=1}^\infty\in\Omega$,
define inductively an infinite up-right lattice path in $\mathbb{N}^2$
%
\begin{equation}
\label{eq:jdt-path} \jpath_1(t), \jpath_2(t),
\jpath_3(t), \ldots,
\end{equation}
where $\jpath_1(t)=(1,1)$, and for each $k\ge2$, $\jpath_k =
(i_k,j_k)$ is given by
%
\begin{equation}
\label{eq:jdt-path-def} \jpath_k = %
\cases{ (i_{k-1}+1,j_{k-1}),
&\quad $\mbox{if } t_{i_{k-1}+1,j_{k-1}} < t_{i_{k-1},j_{k-1}+1}$, \vspace*{2pt}
\cr
(i_{k-1},j_{k-1}+1), &\quad $\mbox{if } t_{i_{k-1}+1,j_{k-1}} >
t_{i_{k-1},j_{k-1}+1}$.} %
\end{equation}
That is, one starts from the corner box of the tableau and starts
traveling in unit steps to the right and up, at each step choosing the
direction among the two in which the entry in the tableau is smaller.
We refer to the path \eqref{eq:jdt-path} defined in this way as the
\demph{jeu de taquin path} of the
tableau $t$. This is illustrated in Figure~\ref{fig-jdt-path}(a).

\begin{figure}

\includegraphics{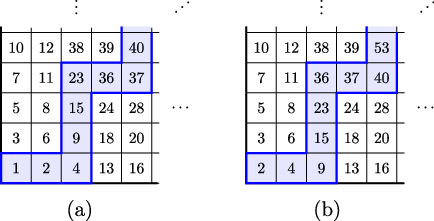}

\caption{\textup{(a)} A part of an infinite Young
tableau $t$. The highlighted boxes form the beginning of the \jdt path
$\jpath(t)$. \textup{(b)} The outcome of ``sliding'' of
the boxes along the highlighted \jdt path. The outcome of the \jdt
transformation $J(t)$ is obtained by subtracting $1$ from all the entries.}
\label{fig-jdt-path}
\end{figure}

%
%

We now use the jeu de taquin path to define a new infinite tableau
$s=J(t) =(s_{i,j})_{i,j=1}^\infty$, using the formula
%
\begin{equation}
\label{eq:jdt-transformation} s_{i,j} = %
\cases{t_{\jpath_{k+1}}-1, & \quad $
\mbox{if } (i,j)=\jpath_k\mbox{ for some }k$, \vspace*{2pt}
\cr
t_{i,j}-1, & \quad $\mbox{otherwise.}$} %
\end{equation}
The mapping $t\mapsto s=J(t)$ defines a transformation $J\dvtx \Omega\to
\Omega$, which we call the \demph{jeu de taquin map}. In words, the way
the transformation works is by removing the box at the corner, then
sliding the second box of the jeu de taquin path into the space left
vacant by the removal of the first box, and continuing in this way,
successively sliding each box along the jeu de taquin path into the
space vacated by its predecessor.
At the end, one subtracts $1$ from all entries to obtain a new array of
numbers. It is easy to see that the resulting array is an infinite
Young tableau: the definition of the \jdt path guarantees that the
sliding is done in such a way that preserves monotonicity along rows
and columns. For an example, compare Figure \ref{fig-jdt-path}(a) and
\ref{fig-jdt-path}(b).

The above construction is a generalization of the construction of\break 
\citet{Schutzenberger1977} who introduced it for finite Young tableaux.\break  Sch\"
utzenberger's jeu de taquin turned out to be a very powerful tool
of algebraic combinatorics and the representation theory of symmetric
groups; in particular, it is important in studying combinatorics of
words, the Robinson--Schensted--Knuth ($\RSK$) correspondence and the
Littlewood--Richardson rule; see \citet{Fulton1997} for an overview.

\subsubsection{An infinite version of the Robinson--Schensted--Knuth algorithm}
\label{subsubsec:infinite-rsk}

Next, we consider an infinite version of the \emph{Robinson--Schensted--Knuth\break (RSK) algorithm} which can be applied to an
infinite sequence $(x_1,x_2,x_3,\ldots)$ of distinct real
numbers.\setcounter{footnote}{2}\footnote{Actually, this is an infinite version of a special
case of $\RSK$ that predates it and is known as the \demph
{Robinson--Schensted algorithm}, but we prefer to use the $\RSK$
mnemonic due to its convenience and familiarity to a large number of
readers.} This infinite version was considered in a more general setup
by \citet{KerovVershik1986} [the finite version of the algorithm,
summarized here, is discussed in detail by \citet{Fulton1997}]. The
algorithm performs an inductive computation, reading the inputs
$x_1,x_2, \ldots$ successively, and at each step applying a so-called
\emph{insertion step} to its previous computed output together with the
next input $x_n$.

The insertion step, given an increasing tableau $P_{n-1}$ and a number
$x_n$ produces a new increasing tableau $P_n$ whose shape $\lambda_n$
is obtained from $\lambda_{n-1}$ by the addition of a single box. The
new tableau $P_n$ is computed by performing a succession of \emph
{bumping steps} whereby $x_n$ is inserted into the first row of the
diagram (as far to the right as possible so that the row remains
increasing and no gaps are created), bumping an existing entry from the
first row into the second row, which results in an entry of the second
row being bumped to the third row, and so on, until finally the entry
being bumped settles down in an unoccupied position outside the diagram~$\lambda$. An example is shown in Figure~\ref{fig:insertion-step}.

\begin{figure}

\includegraphics{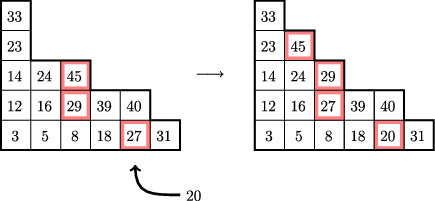}

\caption{Example of an insertion step. The highlighted boxes indicate
the locations of bumped entries.}
\label{fig:insertion-step}
\end{figure}

For each $n\ge0$, after inserting the first $n$ inputs $x_1,\ldots
,x_n$ the algorithm produces a triple $(\lambda_n, P_n, Q_n)$, where
$\lambda_n \in\partitions{n}$ is a Young diagram with $n$ boxes, $P_n$
is an increasing tableau of shape $\lambda_n$ containing the numbers
$x_1, \ldots, x_n$, and $Q_n$ is a standard Young tableau of shape
$\lambda_n$. The shapes satisfy $\lambda_{n-1}\nearrow\lambda_n$, that
is, at each step one new box is added to the current shape, with the
tableau $Q_n$ being simply the recording tableau of the path $\varnothing
= \lambda_0 \nearrow\lambda_1 \nearrow\ldots\nearrow\lambda_n$. The
tableau $P_n$ is the information that will be acted upon by the next
insertion step, and is called the \demph{insertion tableau}.
We will refer to $\lambda_n$ as the \demph{$\RSK$ shape associated to
$(x_1,\ldots,x_n)$}.

In this infinite version of the algorithm, we shall assume that
$x_1,x_2,\ldots$ are such that the infinite Young graph path
$\varnothing
= \lambda_0 \nearrow\lambda_1 \nearrow\ldots$ can be encoded by an
infinite recording tableau $Q_\infty$ (i.e., we assume that every box
in the first quadrant eventually gets added to some $\lambda_n$). For
our purposes, the information in the insertion tableaux $P_n$ will not
be needed, so we simply discard it, and define the \demph{\textup{(}infinite\textup{)}
$\RSK$ map} by
\[
\RSK(x_1,x_2,\ldots)=Q_\infty.
\]

\subsection{The main results}

We are now ready to state our main results.

\subsubsection{The jeu de taquin path}
Our first result concerns the asymptotic behavior of the jeu de taquin path.
For a given infinite tableau $t\in\Omega$, we define $\Theta= \Theta
(t)\in[0,\pi/2 ]$ by
\[
\bigl(\cos\Theta(t),\sin\Theta(t) \bigr) = \lim_{k\to\infty}
\frac
{\jpath_k(t)}{\Vert\jpath_k(t)\Vert}
\]
whenever the limit exists, and in this case refer to $\Theta$ as the
\demph{asymptotic angle} of the \jdt path.

\begin{thmm}[(Asymptotic behavior of the jeu de taquin path)]
\label{thmm-straight-line}
The jeu de taquin path converges $\plancherel$-almost surely to a
straight line with a random direction. More precisely, we have
\[
\plancherel \biggl[\lim_{k\to\infty} \frac{\jpath_k}{\Vert\jpath
_k\Vert
}\mbox{ exists }
\biggr] = 1.
\]
Under the Plancherel measure $\plancherel$, the asymptotic angle
$\Theta
$ is an absolutely continuous random variable on $[0,\pi/2]$ whose
distribution has the following explicit description:
%
\begin{equation}
\label{eq:theta-dist} \Theta\equalindist\Pi(W),
\end{equation}
where $W$ is a random variable distributed according to the semicircle
distribution $\SClaw$ on $[-2,2]$, that is, having density given by
%
\begin{equation}
\label{eq:semicircle} \SClaw(dw) = \frac{1}{2 \pi} \sqrt{4-w^2} \,dw\qquad \bigl (|w|
\le2\bigr),
\end{equation}
and $\Pi(\cdot)$ is the function
\[
\Pi(w) = \frac{\pi}{4}-\cot^{-1} \biggl[\frac{2}{\pi} \biggl(
\sin ^{-1} \biggl(\frac{w}{2} \biggr)+\frac{\sqrt{4-w^2}}{w} \biggr)
\biggr]\qquad (-2 \le w \le2).
\]
\end{thmm}

Figure~\ref{fig:theta} shows simulation results illustrating the
theorem. Figure~\ref{fig-theta-density-plot} shows a plot of the
density function of $\Theta$. Note that the definition of the
distribution of $\Theta$ has a more intuitive geometric description;
see Section~\ref{sec-weak-asym} for the details.

\begin{figure}

\includegraphics{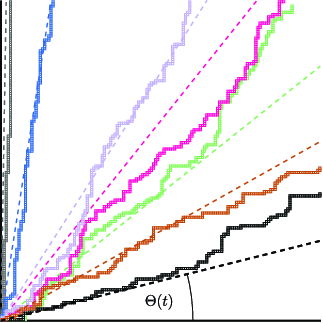}

\caption{Several simulated paths of \jdt and (dashed lines) their asymptotes.}
\label{fig:theta}
\end{figure}

\begin{figure}[b]

\includegraphics{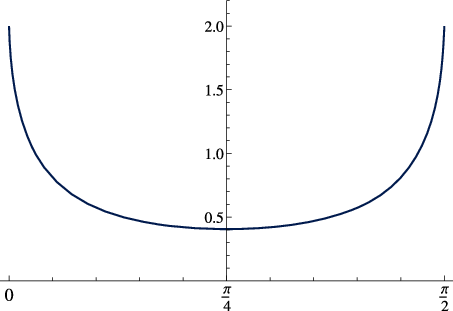}

\caption{A plot of the density function of $\Theta$. The density is
bounded but is heavily skewed, with most of the probability
concentrated near the ends of the interval $[0,\pi/2]$.}
\label{fig-theta-density-plot}
\end{figure}

\subsubsection{The Plancherel-TASEP interacting particle system}

One topic that we will explore in more detail later is an analogy between
Theorem~\ref{thmm-straight-line} and a result of \citet
{FerrariPimentel2005} on competition interfaces in the corner growth
model. Furthermore, this result is essentially a reformulation of
previous results of \citet{FerrariKipnis1995} and \citet
{MountfordGuiol2005} on the limiting speed of second class particles in
the \demph{Totally Asymmetric Simple Exclusion Process (TASEP)};
similarly, our Theorem~\ref{thmm-straight-line} affords a
reinterpretation in the language of interacting particle systems,
involving a variant of the TASEP which we call the \demph
{Plancherel-TASEP particle system}. We find this reinterpretation to be
just as interesting as the result above. However, because of the
complexity of the necessary background, and to avoid making this
introductory section excessively long, we formulate this version of the
result here without explaining the meaning of the terminology used, and
defer the details and further exploration of this connection to
Section~\ref{sec-particle-systems}. We encourage the reader to visit
the discussion in that section to gain a better appreciation of the
context and importance of the result.

\begin{thmm}[(The second class particle trajectory)] \label
{thmm-second-class-plancherel-tasep}
For $n\ge0$, let $X(n)$ denote the location at time $n$ of the
second-class particle in the\break Plancherel-TASEP interacting particle system.
The limit
\[
W = \lim_{n\to\infty} \frac{X(n)}{\sqrt{n}}
\]
exists almost surely and is a random variable distributed according to
the semicircle distribution $\SClaw$.
\end{thmm}

The limiting random variable $W$ can be thought of as an asymptotic
speed parameter for the second-class particle. Namely, if one considers
for each $n\ge1$ the scaled trajectory functions
%
\begin{equation}
\label{eq:second-class-rescaled} \widehat{X}_n(t) =
 \frac{X(\lfloor n t \rfloor)}{\sqrt{n}}\qquad (t>0),
\end{equation}
then Theorem~\ref{thmm-second-class-plancherel-tasep} can be
reformulated as saying that as $n\to\infty$, almost surely the
trajectory will follow asymptotically one of the curves in the
one-parameter family
$(\alpha\sqrt{t})_{-2\le\alpha\le2}$, where the parameter $\alpha
$ is
random and chosen according to the distribution $\SClaw$. If one
reparameterizes time by replacing $t$ with $t^2$ (which is arguably a
more natural parameterization---see the discussion in
Section~\ref{sec:stochastic}), we get the statement that the limiting
trajectory of the second-class particle is asymptotically a straight
line with slope $\alpha$. This is analogous to the result of \citet
{MountfordGuiol2005}, where the process is the ordinary TASEP and the
limiting speed of the second-class particle has the uniform
distribution $U(-1,1)$ on the interval $[-1,1]$.

\subsubsection{The jeu de taquin dynamical system}

It is worth pointing out that the \jdt applied to an infinite tableau
$t\in\Omega$ produces two interesting pieces of information: the \jdt
path \eqref{eq:jdt-path}:
\[
\jpath(t)= \bigl( \jpath_1(t), \jpath_2(t), \ldots
\bigr),
\]
and another infinite tableau $J(t)\in\Omega$.
This setup naturally raises questions about the iterations of the \jdt map
\[
t, J(t), J \bigl(J(t) \bigr), \ldots
\]
or, in other words, about the dynamical system $\mathfrak{J}=(\Omega,
\mathcal{F}, \plancherel, J)$, which we call the \demph{\jdt dynamical
system}. The following result shows that this is indeed a very natural
point of view.

\begin{thmm}[(Measure preservation and ergodicity)]
\label{thmm-measure-preservation}
The dynamical system $\mathfrak{J}=(\Omega, \mathcal{F}, \plancherel
,J)$ is measure-preserving and ergodic.
\end{thmm}

We believe the part of the above result concerning the measure-preservation may be known to experts in the field, though we are not
aware of a reference to it in print. The second part concerning
ergodicity is new.

The next result sheds light on the behavior of the \jdt dynamical
system~$\mathfrak{J}$, by showing that it has probably the simplest
possible structure one could hope for, namely, it is isomorphic to an
i.i.d. shift.

\begin{thmm}[(Isomorphism to an i.i.d. shift map)]
\label{thmm-isomorphism}
Let\/ $\mathfrak{S}=([0,1]^\mathbb{N},\mathcal{B},\break  \mathrm
{Leb}^{\otimes\mathbb{N}},S)$ denote
the measure-preserving dynamical system corresponding to the
(one-sided) shift map on an infinite sequence of independent random
variables with the uniform distribution $U(0,1)$ on the unit interval
$[0,1]$. That is, $\mathrm{Leb}^{\otimes\mathbb{N}}=\prod_{n=1}^\infty(\operatorname{Leb})$
is the product of Lebesgue measures on $[0,1]$, $\mathcal{B}$ is the
product $\sigma$-algebra on $[0,1]^\mathbb{N}$, and $S\dvtx [0,1]^\mathbb
{N}\to[0,1]^\mathbb{N}$ is the shift map, defined by
\[
S(x_1,x_2,\ldots) = (x_2,x_3,
\ldots).
\]
Then the mapping $\RSK\dvtx [0,1]^\mathbb{N}\to\Omega$ is an isomorphism
between the measure-preserving dynamical systems $\mathfrak{J}$ and
$\mathfrak{S}$.
\end{thmm}

Note that such a complete characterization of the highly nontrivial
measure-preserving system $\mathfrak{J}$ may open up many
possibilities for additional applications. We hope to explore these
possibilities in future work. Furthermore, in contrast to many
structure theorems in ergodic theory that show isomorphism of
complicated dynamical systems to i.i.d. shift maps via an abstract
existential argument that does not provide much insight into the nature
of the isomorphism, here the isomorphism is a completely explicit,
familiar and highly structured mapping---the $\RSK$ algorithm.

Note also that $\RSK$ is defined on the set of sequences
$(x_1,x_2,\ldots)$ which satisfy the assumption mentioned in Section~\ref{subsubsec:infinite-rsk}. This is known (see again Theorem~\ref
{thmm-plancherel-limitshape} below) to be a set of full measure with
respect to $\mathrm{Leb}^{\otimes\mathbb{N}}$.

Theorem~\ref{thmm-isomorphism} above encapsulates several separate
claims: first, that the Plancherel measure $\plancherel$ is the
push-forward of the product measure $\mathrm{Leb}^{\otimes\mathbb
{N}}$ under the mapping
$\RSK
$; this is easy and well known (see Lemma~\ref{lem-push-forward}
below). Second, that $\RSK$ is a factor map (also known as
homomorphism) of measure-preserving dynamical systems. This is the
statement that
%
\begin{equation}
\label{eq:factormap} J\circ\RSK= \RSK\circ\, S,
\end{equation}
that is, the following diagram commutes:
\[
\xymatrix{
{[0,1]^\mathbb{N}} \ar[r]^{S}\ar[d]^{\mathrm{RSK}}& {[0,1]^\mathbb{N}} \ar[d]^{\mathrm{RSK}}\\
\Omega\ar[r]^{J}& \Omega}
\]
This is somewhat nontrivial but follows from known combinatorial
properties of the $\RSK$ algorithm and jeu de taquin in the finite
setting. Finally, the hardest part is the claim that this factor map is
in fact an isomorphism. It is also the most surprising: recall that in
the infinite version of the $\RSK$ map we discarded all the information
contained in the insertion tableaux $(P_n)_{n=1}^\infty$. In the finite
version of $\RSK$, the insertion tableau is essential to inverting the
map, so how can we hope to invert the infinite version without this
information? It turns out that Theorem~\ref{thmm-straight-line} plays
an essential part: the asymptotic direction of the \jdt path provides
the key to inverting $\RSK$ in our ``infinite'' setting. This is
explained next.

\subsubsection{The inverse of infinite $\operatorname{RSK}$}

The secret to inversion of infinite $\RSK$ is as follows. We will show
in a later section (see Theorem~\ref{thmm-asym-det-jdt} below) that the
limiting direction $\Theta$ of the \jdt path is a function of only the
first input $X_1$ in the sequence of i.i.d. uniform random variables
$X_1, X_2, \ldots$ to which the $\RSK$ factor map is applied. Moreover,
this function is an explicit (and invertible) function. This gives us
the key to inverting the map $\RSK(\cdot)$ and, therefore, proving the
isomorphism claim, since, if we can recover $X_1$ from the infinite
tableau $T$, then by iterating the map $J$ and using the factor
property we can similarly recover the successive inputs $X_2, X_3,\ldots
,$ etc. Thus, we get the following explicit description of the inverse
$\RSK$ map.

\begin{thmm}[(The inverse of infinite $\RSK$)]
\label{thmm-inverse-rsk}
The inverse mapping\break $\RSK^{-1}\dvtx  \Omega\to[0,1]^\mathbb
{N}$ is
given $\plancherel$-almost surely by
\[
\RSK^{-1}(t) = \bigl[ F_\Theta \bigl(\Theta_1(t)
\bigr),F_\Theta \bigl( \Theta _2(t) \bigr),
F_\Theta \bigl(\Theta_3(t) \bigr), \ldots \bigr],
\]
where we denote $\Theta_k = \Theta\circ J^{k-1}$ (this refers to
functional iteration of $J$ with itself $k-1$ times), and where
$F_\Theta(s)= \plancherel ( \Theta\le s  )$ is the cumulative
distribution function of the asymptotic angle $\Theta$.
\end{thmm}

Note that one particular consequence of this theorem, which taken on
its own, already makes for a rather striking statement, is the fact
that under the measure $\plancherel$, the sequence of asymptotic angles
$(\Theta_k)_{k=1}^\infty$ obtained by iteration of the map $J$ as above
is a sequence of independent and identically distributed random
variables. The full statement of the theorem can be interpreted as the
stronger fact, which seems all the more surprising, that this
i.i.d. sequence is actually related in a simple way (via
coordinate-wise application of the monotone increasing function
$F_\Theta^{-1}$) to the original sequence of i.i.d. $U(0,1)$ random
variables fed as input to the $\RSK$ algorithm. As a referee pointed
out to us, an earlier clue to this type of isomorphism phenomenon can
be found in the context of $\RSK$ applied to random words over a finite
alphabet; see \citet{OConnell2003,OConnellYor2002} and the remark in
Section~\ref{subsec-random-words}.

\subsection{Overview of the paper}
We have described our main results, but the rest of the paper also
contains additional results of independent interest. The plan of the
paper is as follows. In Section~\ref{sec-jdt-elementary}, we recall
some additional facts from the combinatorics of Young tableaux, which
we use to pick some of the low-hanging fruit in our theory of infinite
jeu de taquin, namely,
the proof of Theorem~\ref{thmm-measure-preservation} and
the fact that $\RSK$ is a factor map, and as preparation for the more
difficult proofs. In Section~\ref{sec:limit-shape}, we prove a weaker
version of Theorem~\ref{thmm-straight-line} that shows convergence in
distribution (instead of almost sure convergence) of the direction of
the \jdt path to the correct distribution. This provides additional
intuition and motivation, since this weaker result is much easier to
prove than Theorem~\ref{thmm-straight-line}.

Next, we attack Theorem~\ref{thmm-straight-line}, which conceptually is
the most difficult part of the paper. Here, we apply methods from the
representation theory of the symmetric group. The necessary background
is developed in Section~\ref{sec:representation-voodoo}, where a key
technical result is proved (this is the only part of the paper where
representation theory is used, and it may be skipped if one is willing
to assume the validity of this technical result). This result is used
in Section~\ref{sec-determinism} to prove two additional results which
are of independent interest (especially to readers interested in
asymptotic properties of random Young tableaux) but which we did not
elaborate on in this \hyperref[sec1]{Introduction}. We refer to these results as the
\demph{asymptotic determinism of $\RSK$} and \demph{asymptotic
determinism of jeu de taquin}.

With the help of these results, Theorems \ref{thmm-straight-line},
\ref
{thmm-isomorphism} and \ref{thmm-inverse-rsk} are then proved in
Section~\ref{sec:proof-main-thms}.

Section~\ref{sec-particle-systems} is then dedicated to exploring the
connection between our results and the theory of interacting particle
systems. In particular, we study in depth the point of view in which a
``lazy'' version of the \jdt path is reinterpreted as encoding the
trajectory of a second-class particle in the Plancherel-TASEP particle
system, and consider how our results are analogous to results discussed
in the papers of \citet{FerrariKipnis1995,
MountfordGuiol2005,FerrariPimentel2005} in connection with the TASEP
and the closely related \demph{corner growth model} (also known under
the name \demph{directed last passage percolation}). This analogy is
one of the main ``inspirational'' forces of the paper, so the reader
interested in this point of view may want to read this section before
the more technical proofs in the sections preceding it.

Finally, Section~\ref{sec-additional-directions} mentions some
additional directions related to the ideas explored in this paper that
we plan to discuss in future work.

\subsection{Notation}

Throughout the paper, we use the following notational conventions: the
letters $\mu, \lambda, \nu$ will generally be used to denote
deterministic Young diagrams, and capital Greek letters such as
$\Lambda
, \Pi$ will be used to denote random Young diagrams. Similarly, lower
case letters such as $t, s$ may be used to denote a deterministic Young
tableau, and $T$ will denote a random one. The normalized semicircle
distribution \eqref{eq:semicircle} (on $[-2,2]$, which is the case when
its variance is $1$ and its even moments are the Catalan numbers) will
always be denoted by $\SClaw$. A~generic context-dependent probability
will be denoted by $\prob(\cdot)$, and expectation by $\E$; the symbol
$\plancherel$ will be reserved for Plancherel measure on the space~$\Omega$ of infinite Young tableaux. Other notation will be introduced
as needed in the appropriate place.

\section{Elementary properties of jeu de taquin and $\operatorname{RSK}$}
\label{sec-jdt-elementary}

In this section, we recall some standard facts about Young tableaux,
and use them to prove the easier parts of the results described in the
introduction (measure preservation, ergodicity and the factor map
property). We also start building some additional machinery that will
be used later to attack the more difficult claims about the asymptotics
of the \jdt path and the invertibility of $\RSK$.

\subsection{Finite version of jeu de taquin}
Let $\lambda\in\partitions{n}$ for some $n\ge1$. To each Young tableau
$t$ of shape $\lambda$, there is associated a finite \jdt path
$(1,1)=\jpath_1, \jpath_2, \ldots, \jpath_m$ defined analogously to
\eqref{eq:jdt-path-def} except that the path terminates at the last
place it visits inside the diagram $\lambda$, and for the purposes of
interpreting the formula \eqref{eq:jdt-path-def} we consider
$t_{i,j}=\infty$ for positions outside $\lambda$. We can similarly
define a finite \jdt map $j$ that takes a tableau $t$ of shape $\lambda
$ and returns a tableau $s=j(t)$ of shape $\mu$ for some $\mu\in
\partitions{n-1}$ satisfying $\mu\nearrow\lambda$. This is defined by
the same formula as \eqref{eq:jdt-transformation}, with the shape $\mu$
being formed from $\lambda$ by removing the last box of the \jdt path.

\begin{lem}
\label{lem:jdt-is-a-bijection}
For any $\lambda\in\partitions{n}$, denote by
\[
j_\lambda\dvtx \SYT_\lambda\rightarrow\bigsqcup
_{\mu: \mu\nearrow
\lambda} \SYT_\mu
\]
the restriction of the finite \jdt map $j$ to $\SYT_\lambda$. Then
$j_\lambda$
is a bijection.
\end{lem}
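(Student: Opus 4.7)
The plan is to exhibit an explicit inverse to $j_\lambda$, given by the classical \demph{reverse jeu de taquin slide}. Fix $\mu$ with $\mu\nearrow\lambda$ and let $c=(a,b)$ be the unique cell of $\lambda\setminus\mu$. Given $s\in\SYT_\mu$, I would define $j_\lambda^{-1}(s)$ as follows: increase every entry of $s$ by $1$, place these entries in the shape $\mu$ inside $\lambda$, and regard cell $c$ as momentarily empty. Then slide the empty cell toward the origin by iterating the rule
\[
\jpath'_{k+1}=
\begin{cases}
(i_k-1,j_k) & \text{if } t_{i_k-1,j_k}>t_{i_k,j_k-1},\\
(i_k,j_k-1) & \text{if } t_{i_k-1,j_k}<t_{i_k,j_k-1},
\end{cases}
\]
with entries outside the current shape treated as $-\infty$, until the empty cell reaches $(1,1)$; finally write $1$ in that cell. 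The result is a filling $t$ of $\lambda$ using each of $1,\dots,n$ exactly once.

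Next I would verify that $t\in\SYT_\lambda$. Rows and columns of $t$ coincide with those of $s+1$ except along the reverse-slide path, and the tie-breaking rule above is exactly the one needed to preserve monotonicity at each vacated cell: sliding in the larger of the two available neighbors restores the inequalities both with the element just slid and with the cells beyond. The freshly placed $1$ in $(1,1)$ is automatically the minimum, completing the check.

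The main task --- and the only genuinely non-trivial point --- is to show that the forward and backward slides are inverse to each other, i.e.\ $j_\lambda\circ j_\lambda^{-1}=\mathrm{Id}$ and $j_\lambda^{-1}\circ j_\lambda=\mathrm{Id}$. The key observation is that the comparison performed at each step of a forward slide, between the right neighbor $t_{i+1,j}$ and the upper neighbor $t_{i,j+1}$ of the current empty cell, compares exactly the same pair of entries as the corresponding step of the reverse slide run from the eventual terminal cell backward. Consequently the empty cell traces out the \emph{same} lattice path in both directions, and the two procedures simply permute the entries along this path in opposite ways. I expect this bookkeeping to be the main (though standard) obstacle: it is conceptually clean but requires a careful induction along the slide path.

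As a sanity check, a cardinality argument confirms the count: by the Young-graph branching rule $f^\lambda=\sum_{\mu\nearrow\lambda}f^\mu$, the two sides of $j_\lambda$ have equal finite cardinality, so the explicit inversion above upgrades to a bijection without further counting. This construction is exactly the classical inverse of Sch\"utzenberger's slide in the form used in \cite{Fulton}, specialized to the situation where the empty cell is initially the corner $(1,1)$ (respectively $c$) of $\lambda$.
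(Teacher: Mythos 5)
Your proposal is correct and follows essentially the same route as the paper, which simply cites the standard fact (Fulton, p.~14) that $j_\lambda$ is inverted by the reverse sliding operation started from the unique cell of $\lambda\setminus\mu$; you have merely spelled out that inverse in detail. One small imprecision: the forward and reverse steps do not literally compare ``the same pair of entries'' (the forward step at the vacancy compares its right and upper neighbors, while the undoing reverse step compares the just-moved entry against a different neighbor, with the column/row-increasing property of the original tableau guaranteeing the right outcome), but the induction along the slide path goes through exactly as you indicate.
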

\begin{pf}
This is a standard fact; see \citet{Fulton1997}, page 14. The idea is
that given the tableau $s=j(t)$ and the shape $\lambda$, one can
recover $t$ by performing a ``reverse sliding'' operation,
starting from the unique cell in the difference $\lambda\setminus\mu$.
\end{pf}

From the lemma, it follows that the preimage $j^{-1}(t)$ of a tableau
$t$ of shape $\lambda$ contains one element for each $\nu$ for which
$\lambda\nearrow\nu$, namely
%
\begin{equation}
\label{eq:finite-jdt-preimage} j^{-1}(t) = \bigl\{ j_\nu^{-1}(t)
\dvtx \nu\in\allpartitions, \lambda \nearrow\nu \bigr\}.
\end{equation}

\subsection{Measure preservation}
\label{subsec-proof-measure-pres}

We now prove that the \jdt map $J$ preserves the Plancherel measure
$\plancherel$, which is the easier part of Theorem~\ref
{thmm-measure-preservation}. The proof requires verifying that the identity
%
\begin{equation}
\label{eq:measure-preservation} \plancherel \bigl(J^{-1}(E) \bigr) = \plancherel(E)
\end{equation}
holds for any event $E \in\mathcal{F}$. We shall do this for a family
of cylinder sets of a certain form, defined as follows. If $\lambda
=(\lambda(1),\ldots,\lambda(k))\in\partitions{n}$ and
$s=(s_{i,j})_{1\le
i\le k, 1\le j\le\lambda(i)}$ is a Young tableau of shape $\lambda$
[where $s_{i,j}$ is our notation for the entry written in the box in
position $(i,j)$], we define the event $E_s\in\mathcal{F}$ by
%
\begin{equation}
\label{eq:elementary-event} E_s = \bigl\{ t=(t_{i,j})_{i,j=1}^\infty
\in\Omega | t_{i,j} = s_{i,j}\mbox{ for all }1\le i\le k, 1\le
j \le\lambda(i) \bigr\}.
\end{equation}
The family of sets of the form $E_s$ clearly generates $\mathcal{F}$
and is a $\pi$-system, so by a standard fact from measure theory
[\citet{Durrett2010}, Theorem A.1.5, page 345], it will be enough to check
that \eqref{eq:measure-preservation} holds for $E_s$.

Note that if $s$ is the recording tableau of the path $\varnothing
=\lambda
_0\nearrow\lambda_1\nearrow\ldots\nearrow\lambda_n=\lambda$ in the
Young graph, then in the language of the Plancherel growth process
\eqref{eq:plancherel-growth}, $E_s$ corresponds to the event that
\[
\{ \Lambda_k = \lambda_k \mbox{ for }0\le k\le n\}.
\]
Therefore, it is easy to see from \eqref{eq:transition-rule} that
%
\begin{equation}
\label{eq:prob-elementary-set} \plancherel(E_s) = \frac{f^\lambda}{n!},
\end{equation}
since when multiplying out the transition probabilities in \eqref
{eq:transition-rule} one gets a telescoping product.

On the other hand, let us compute $\plancherel (J^{-1}(E_s) )$.
From \eqref{eq:finite-jdt-preimage}, we see that $J^{-1}(E_s)$ can be
decomposed as the disjoint union
\[
J^{-1}(E_s) = \bigsqcup_{\nu: \lambda\nearrow\nu}
E_{j_\nu^{-1}(s)}.
\]
Applying \eqref{eq:prob-elementary-set} to each summand, we see that
\[
\plancherel \bigl(J^{-1}(E_s) \bigr) = \sum
_{\nu\in\partitions{n+1},
\lambda
\nearrow\nu} \frac{f^\nu}{(n+1)!},
\]
and this is equal to $f^\lambda/n! = \plancherel(E_s)$ by the
well-known relation
\[
(n+1)f^\lambda= \sum_{\nu \dvtx \lambda\nearrow\nu} f^\nu
\]
[see equation (7) in \citet{GreeneNijenhuisWilf1984}; note that this
relation also explains why \eqref{eq:transition-rule} is a valid Markov
transition rule]. So, \eqref{eq:measure-preservation} holds for the
event $E_s$, as claimed.

\subsection{$\operatorname{RSK}$ and Plancherel measure}
The following lemma is well known [see,
e.g., \citet{KerovVershik1986}], and can be used as an equivalent alternative
definition of Plancherel measure. We include its proof for completeness.

\begin{lem}
\label{lem-push-forward}
Let $X_1, X_2, \ldots$ be a sequence of independent and identically
distributed random variables with the $U(0,1)$ distribution. The random
infinite Young tableau
\[
T = \RSK(X_1,X_2,\ldots)
\]
is distributed according to the Plancherel measure $\plancherel$. In
other words, $\plancherel$ is the push-forward of the product measure
$\mathrm{Leb}^{\otimes\mathbb{N}}$ (defined in Theorem~\ref
{thmm-isomorphism}) under the
mapping $\RSK\dvtx [0,1]^\mathbb{N}\to\Omega$.
\end{lem}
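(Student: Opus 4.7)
The plan is to verify that the push-forward measure agrees with $\plancherel$ on a generating $\pi$-system, namely the cylinder sets $E_s$ of the form \eqref{eq:elementary-event}, since we have already observed in Section~\ref{subsec-proof-measure-pres} that these cylinders generate $\mathcal{F}$. For a Young tableau $s \in \SYT_\lambda$ with $\lambda \in \partitions{n}$, set
$$
A_s = \RSK^{-1}(E_s) = \{(x_1,x_2,\ldots) \in [0,1]^\mathbb{N} : Q_n(x_1,\ldots,x_n) = s\},
$$
where $Q_n$ is the recording tableau produced after inserting $x_1,\ldots,x_n$. Since $Q_n$ depends only on the relative order of its inputs and not on their actual values (the insertion rule only compares entries), membership in $A_s$ is determined entirely by the permutation pattern of $(x_1,\ldots,x_n)$. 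By the target probability \eqref{eq:prob-elementary-set}, it will suffice to show that $\lebesgueN(A_s) = f^\lambda/n!$.

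Because the $X_i$ are i.i.d.\ continuous, with $\lebesgueN$-probability one no two entries coincide, and the relative order of $(X_1,\ldots,X_n)$ is a uniformly random permutation $\sigma \in S_n$. The finite Robinson--Schensted correspondence (see \cite[\S4]{Fulton}) is a bijection
$$
S_n \;\longleftrightarrow\; \bigsqcup_{\lambda \in \partitions{n}} \SYT_\lambda \times \SYT_\lambda, \qquad \sigma \mapsto (P_n(\sigma), Q_n(\sigma)),
$$
sending a permutation to a pair of standard Young tableaux of the same shape. Hence the number of permutations $\sigma \in S_n$ with $Q_n(\sigma) = s$ equals the number of possible insertion tableaux of shape $\lambda$, which is $f^\lambda$. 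Consequently
$$
\lebesgueN(A_s) \;=\; \prob\bigl(Q_n(\sigma)=s\bigr) \;=\; \frac{f^\lambda}{n!} \;=\; \plancherel(E_s),
$$
the last equality being \eqref{eq:prob-elementary-set}.

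There is a small loose end: $\RSK$ is only defined on the subset of sequences for which every box of the first quadrant is eventually added (so that the $Q_\infty$ is a bona fide infinite tableau). I would dispose of this by invoking Theorem~\ref{thm-plancherel-limitshape} (cited in the text), which gives that the Plancherel-distributed random Young diagrams $\Lambda_n$ almost surely have unbounded row and column lengths, combined with the identity of finite-dimensional distributions established above; this forces $\lebesgueN$-almost every sequence to lie in the domain of $\RSK$. The main obstacle, such as it is, is bookkeeping: making sure one correctly identifies ``the relative order determines $Q_n$'' and invoking the RSK bijection to count fibers; no deep combinatorial or probabilistic input beyond these standard facts is required.
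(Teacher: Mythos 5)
Your proposal is correct and follows essentially the same route as the paper's proof: reduce to the cylinder sets $E_s$, note that $Q_n$ depends only on the order structure of $(X_1,\ldots,X_n)$, which is a uniform permutation, and count the fiber over $s$ via the Robinson--Schensted bijection to get $f^\lambda/n!$. Your handling of the domain issue via Theorem~\ref{thm-plancherel-limitshape} matches the remark the paper makes just before stating Theorem~\ref{thm-isomorphism}.
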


\begin{pf} Let $\plancherel'$ be the distribution measure of $T$.
Let $\lambda=(\lambda(1),\ldots,\lambda(k))\in
\partitions
{n}$ for some $n\ge1$ and let $s=(s_{i,j})_{1\le i\le k, 1\le j\le
\lambda(i)}$ be a Young tableau of shape~$\lambda$. Then the event $\{
T\in E_s\}$ [with $E_s$ as in \eqref{eq:elementary-event}] can be
written equivalently as $\{ Q_n = s \}$, where $Q_n$ is the recording
tableau part of the $\RSK$ algorithm output $(P_n,Q_n)$ corresponding
to the first $n$ inputs $(X_1,\ldots,X_n)$. Note that $Q_n$ is
dependent only on the order structure of the sequence $X_1,\ldots,X_n$;
this order is a uniformly random permutation in the symmetric group
$S_n$, and by the properties of the $\RSK$ correspondence,
%
\begin{equation}
\label{eq:probability-plancherel-tableau} \prob(Q_n=s) = f^\lambda/n!,
\end{equation}
since there are $f^\lambda$ possibilities to choose the insertion
tableau $P_n$, each of them corresponding to a single permutation among
the $n!$ possibilities.
Therefore, we have that
\[
\plancherel'(E_s) = \prob(T\in E_s) =
\prob( Q_n = s ) = \frac
{f^\lambda
}{n!} = \plancherel(E_s).
\]
Since this is true for any Young tableau $s$, and the events $E_s$ form
a $\pi$-system generating $\mathcal{F}$, it follows that the
measures $\plancherel'$ and $\plancherel$ coincide.
\end{pf}

\subsection{$\operatorname{RSK}$ is a factor map}
\label{subsec-factor-map}

We now prove \eqref{eq:factormap}. We need the following result which
concerns $\RSK$ and \jdt in the finite setup; see \citet{Sagan2001}, Proposition~3.9.3, for a proof.

\begin{lem}[{[\citet{Schutzenberger1963}]}]
\label{lem:factor-map}
Let $x_1,\ldots,x_n$ be distinct numbers. Let $Q_n$ be the recording
tableau associated by $\RSK$ to $(x_1,x_2,\ldots,x_n)$ and let
$\widetilde{Q}_{n-1}$ be the recording tableau associated to
$(x_2,x_3,\ldots,x_n)$. Then
\[
\widetilde{Q}_{n-1} = j(Q_n),
\]
where $j$ is the finite version of the \jdt map.
\end{lem}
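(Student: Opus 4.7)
The plan: This lemma expresses a classical compatibility between the $\RSK$ row-insertion algorithm and Schützenberger's jeu de taquin slide, whose underlying combinatorics is worked out in detail in \cite[\S3]{Fulton}. I would proceed by induction on $n$.

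The base case $n=1$ is immediate: $Q_1$ consists of a single box with entry $1$, so $j(Q_1)$ is the empty tableau, and likewise $\widetilde{Q}_0 = \emptyset$. For the inductive step, the aim is to compare how the two recording tableaux grow when $x_n$ is appended. The tableau $Q_n$ is obtained from $Q_{n-1}$ by adding a new box, labeled $n$, at the endpoint of the bumping path produced by row-inserting $x_n$ into $P_{n-1}$; similarly $\widetilde{Q}_{n-1}$ is obtained from $\widetilde{Q}_{n-2}$ by adding a new box, labeled $n-1$, at the endpoint of the bumping path produced by inserting $x_n$ into $\widetilde{P}_{n-2}$. The inductive hypothesis gives $j(Q_{n-1}) = \widetilde{Q}_{n-2}$, so it suffices to show that adding the box labeled $n$ to $Q_{n-1}$ and then applying $j$ produces the same tableau as applying $j$ to $Q_{n-1}$ (yielding $\widetilde{Q}_{n-2}$) and then adding the box labeled $n-1$ at the correct new cell.

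The main obstacle is the following purely combinatorial comparison: one must verify that the endpoint of the bumping path of $x_n$ inside $P_{n-1}$ and the endpoint of the corresponding bumping path of $x_n$ inside $\widetilde{P}_{n-2}$ differ exactly by the shift induced by the jeu de taquin slide on $Q_{n-1}$, in such a way that after sliding, the correct corner of $j(Q_n)$ is labeled $n-1$. This alignment is governed by the classical ``row bumping lemma'' and its companion statements on how successive bumping paths compose and interact, which are the standard technical backbone of the proof of Schützenberger's result (see \cite[\S3]{Fulton}). The case analysis is routine but not short, so I would follow the exposition there rather than reproduce it.

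As an alternative --- conceptually cleaner but requiring more setup --- the lemma admits a transparent proof via Fomin's growth-diagram formalism for $\RSK$, in which one encodes the pair $(P_n,Q_n)$ as a labeling of the corners of a grid by Young diagrams satisfying local growth rules. In that language, deleting the first input $x_1$ corresponds to deleting the initial row of the growth diagram, and a direct inspection of the local rule shows that this deletion acts on the edge labels of the diagonal (which encode $Q_n$) as precisely one jeu de taquin slide at the corner followed by a decrement of all entries, matching the definition of $j$.
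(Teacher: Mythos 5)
Your inductive set-up is fine as far as it goes: the base case is trivial, and one can check that the \jdt path of $Q_n$ is the \jdt path of $Q_{n-1}$, possibly extended by one step into the newly added box, so that $j(Q_n)$ is $j(Q_{n-1})$ with one extra box labelled $n-1$ placed either at $\Rec(x_1,\dots,x_n)$ or at the terminal box of the \jdt path of $Q_{n-1}$. But the step you flag as the ``main obstacle'' --- that this position coincides with $\Rec(x_2,\dots,x_n)$, the endpoint of the bumping path of $x_n$ into $\widetilde{P}_{n-2}$ --- is not a routine consequence of the row bumping lemma, and this is where the proof has a genuine gap. The row bumping lemma compares the bumping routes of two successive insertions into the \emph{same} tableau; here you must compare insertions of $x_n$ into the two \emph{different} tableaux $P_{n-1}$ and $\widetilde{P}_{n-2}$, whose mutual relationship (governed by deleting the first letter of the generating word, i.e.\ by Knuth equivalence) is itself nontrivial and is essentially the content of the lemma in incremental form. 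As written, the inductive step assumes a statement equivalent in difficulty to the one being proved. The growth-diagram alternative you sketch is a legitimate route, but the phrase ``a direct inspection of the local rule shows'' again conceals exactly the verification that needs to be supplied.

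For comparison, the paper avoids this difficulty entirely by a change of viewpoint. Using the classical symmetry of $\RSK$ under inversion of the word, the recording tableau of $\mathbf{x}$ equals the \emph{insertion} tableau of $\mathbf{x}^{-1}$, and likewise for $\widetilde{\mathbf{x}}$; the point is that $\widetilde{\mathbf{x}}^{-1}+1$ is obtained from $\mathbf{x}^{-1}$ by deleting the entry $1$. The insertion tableau of $\mathbf{x}^{-1}$ is then computed as the \jdt rectification of the antidiagonal skew tableau filled with $\mathbf{x}^{-1}$, and deleting the entry $1$ from every intermediate skew tableau yields a valid sliding sequence for $\widetilde{\mathbf{x}}^{-1}+1$ that differs from the original only by one final slide out of the corner $(1,1)$ --- which is precisely the map $j$. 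If you want to salvage your induction, you would need to prove the positional claim directly (relating the two bumping routes via the \jdt path of $Q_{n-1}$); otherwise the inversion-plus-rectification argument is the cleaner path.
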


Let $(x_1,x_2,\ldots) \in[0,1]^{\mathbb{N}}$ be a sequence for which
the infinite tableau\break $\RSK(x_1, x_2,\ldots)=Q_\infty$ is defined. In the
notation of the lemma, $Q_\infty$ is the unique infinite tableau that
``projects down'' to the sequence of finite recording tableaux $Q_n$
(in the sense that deleting all entries $>n$ gives $Q_n$). The sequence
of recording tableaux $\widetilde{Q}_{n-1}=j(Q_n)$ of $(x_2,\ldots
,x_n)$ for $n\ge1$ also determines a unique infinite tableau
$\widetilde
{Q}_\infty$ with the same projection property, which is therefore the
recording tableau of $(x_2,x_3,\ldots)=S(x_1,x_2,\ldots)$. Because $j$
is a finite version of $J$, it is easy to see that this implies
$J(Q_\infty)=\widetilde{Q}_\infty$, which is the relation \eqref
{eq:factormap} for the input $(x_1,x_2,\ldots)$.

Note that \eqref{eq:factormap} also implies that the measure-preserving
system $\mathfrak{J}$ is ergodic, since a factor of an ergodic system
is ergodic [\citet{Silva2008}, page 119].
So, we have finished proving Theorem~\ref{thmm-measure-preservation}.

\subsection{Monotonicity properties of $\operatorname{RSK}$}

We will identify the set of boxes of an infinite Young tableau with $\N
^2$. We introduce a partial order on $\N^2$ as follows:
\[
(x_1,y_1)\preceq(x_2,y_2) \quad\iff\quad
x_1 \leq x_2\mbox{ and }y_1\geq
y_2.
\]

If $\mathbf{a}=(a_1,\ldots,a_n)$ and $\mathbf{b}=(b_1,\ldots,b_k)$ are
finite sequences we denote by
\[
\mathbf{a} \mathbf{b}=(a_1,\ldots,a_n,b_1,
\ldots,b_k)
\]
their concatenation. Also, if $b$ is a number we denote by
\[
\mathbf{a} b=(a_1,\ldots,a_n,b)
\]
the sequence $\mathbf{a}$ appended by $b$, etc.

For a finite sequence $\mathbf{a}=(a_1,\ldots,a_n)$ we denote by $\Rec
(\mathbf{a})\in\N^2$ the last box which was inserted to the Young
diagram by the $\RSK$ algorithm applied to the sequence $\mathbf{a}$.
In other words, it is the box containing the biggest number in the
recording tableau associated to $\mathbf{a}$.

\begin{lem}
\label{lem:RSK-monotone1}
Assume that the elements of the sequence $\mathbf{a}=(a_1,\ldots,a_l)$
and $b,b'$ are distinct numbers and $b<b'$.
Then we have the relations:
\begin{longlist}[(a)]
\item[(a)] $\Rec(\mathbf{a} b) \prec\Rec(\mathbf{a} b b')$;
\item[(b)] $\Rec(\mathbf{a} b') \succ\Rec(\mathbf{a}
b' b)$;
\item[(c)] $\Rec(\mathbf{a} b)\preceq\Rec(\mathbf{a} b')$;
\item[(d)] $\Rec(\mathbf{a} b') \preceq\Rec(\mathbf{a}
b b')$.
\end{longlist}
\end{lem}

\begin{pf}
Parts (a) and (b) are slightly weaker versions of
the ``Row Bumping Lemma'' in \citeauthor{Fulton1997} [(\citeyear{Fulton1997}), page 9]. The remaining
parts (c) and (d) follow using a similar argument
of comparing the ``bumping routes.''
\end{pf}

Note that part (a) [resp., part (b)] in the lemma
above implies that if a sequence $\mathbf{a}=(a_1,\ldots,a_n)$ is
arbitrary and $\mathbf{b}=(b_1,\ldots,b_k)$ is increasing (resp.,
decreasing), and $\Box_1,\ldots,\Box_{n+k}$ are the boxes of the
$\RSK$
shape associated to the concatenated sequence $\mathbf{a} \mathbf{b}$,
written in the order in which they were added (i.e., $\Box_j$ being the
box containing the entry $j$ in the recording tableau), then
$ \Box_{n+1} \prec\cdots\prec\Box_{n+k}$ (resp., $ \Box_{n+1}
\succ\cdots\succ\Box_{n+k}$).
Part (c) shows that the function $z\mapsto\Rec(\mathbf{a}
z)$ is weakly increasing with respect to the order $\preceq$.

\subsection{Symmetries of $\operatorname{RSK}$}

For a box $(i,j)\in\N^2$ we denote by $(i,j)^t=(j,i)$ the transpose
box, obtained under the mirror image across the axis $x=y$.
For a Young diagram $\lambda\in\partitions{n}$ the transposed diagram
$\lambda^t\in\partitions{n}$ is obtained by transposing all boxes of
the original Young diagram. In the following lemma, we recall some of
the well-known symmetry properties of the $\RSK$ algorithm.

\begin{lem}
\label{lem:symmetries-rsk}
Let $x_1,\ldots,x_n$ be a sequence of distinct elements and let
$\lambda
$ be the corresponding $\RSK$ shape.
Then:
\begin{longlist}[(a)]
\item[(a)] the $\RSK$ shape associated to the
sequence $x_n,x_{n-1},\ldots,x_1$ is equal to $\lambda^t$;
\item[(b)] the $\RSK$ shape associated to the sequence
$1-x_1,1-x_{2},\ldots,1-x_n$ is equal to $\lambda^t$;
\item[(c)] the $\RSK$ shape associated to the
sequence $1-x_n,1-x_{n-1},\ldots,1-x_1$ is equal to $\lambda$.
\end{longlist}
\end{lem}

\begin{pf}
Claim~(c) follows from (a)
and (b), which are both immediate consequences of
Greene's Theorem [\citet{Stanley1999}, Theorem A1.1.1].
\end{pf}


\section{The limit shape and the semicircle transition measure}
\label{sec:limit-shape}

\subsection{The limit shape of Plancherel-random diagrams}
\label{subsec:limit-shape}
In what follows, the limit shape theorem for Plancherel-distributed
random Young diagrams, due to \citet{LoganShepp1977} and \citeauthor{VervsikKerov1977} (\citeyear{VervsikKerov1977,VershikKerov1985}) (that was instrumental in the
solution of the famous Ulam problem on the asymptotics of the maximal
increasing subsequence length in a random permutation), will play a key
role, so we recall its formulation.

Given a Young diagram $\lambda=(\lambda(1),\ldots,\lambda(k))\in
\partitions{n}$, we identify it with the subregion
%
\begin{equation}
\label{eq:diag-region} A_\lambda= \bigcup_{ 1\le i\le k, 1\le j\le\lambda(i) }
[i-1,i]\times[j-1,j]
\end{equation}
of the first quadrant of the plane. Transform this region by
introducing the coordinate system
\[
u = x-y,\qquad v = x+y
\]
(the so-called Russian coordinates) rotated by 45 degrees and stretched
by the factor $\sqrt{2}$ with respect to the $(x,y)$ coordinates. In
the $(u,v)$-coordinates, the region $A_\lambda$ now has the form
\[
A_\lambda= \bigl\{ (u,v) \dvtx -\lambda'(1) \le u \le
\lambda(1), |u| \le v \le\phi_\lambda(u) \bigr\},
\]
where $\lambda'(1)=k$ is the number of parts of $\lambda$, and $\phi
_\lambda$ is a piecewise linear function on $[-\lambda'(1), \lambda
(1)]$ with slopes $\phi_\lambda' = \pm1$. We extend $\phi_\lambda$ to
be defined on all of $\R$ by setting $\phi_\lambda(u)=|u|$ for
$u\notin
[-\lambda'(1), \lambda(1)]$, as illustrated in Figure~\ref{fig:french}.
The function $\phi_\lambda$, called \emph{profile} of $\lambda$, is a
useful way to encode the shape of the diagram $\lambda$.

\begin{figure}

\includegraphics{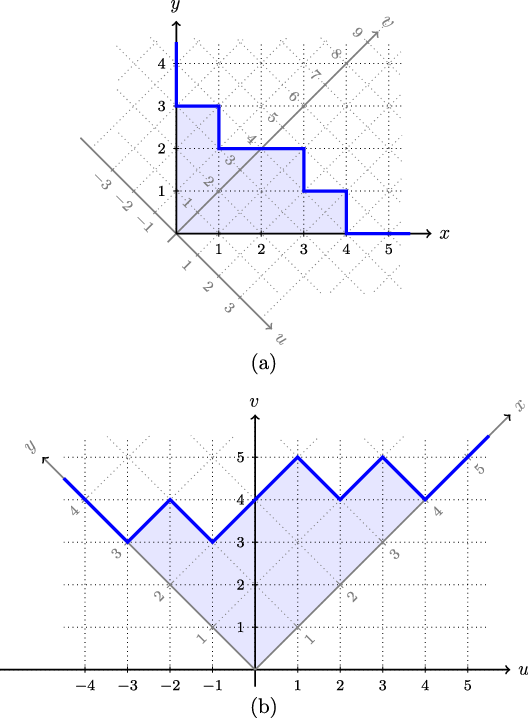}

\caption{A Young diagram $\lambda=(4,3,1)$ shown in \textup{(a)} the French, and
\textup{(b)} the Russian convention. The solid line represents the profile $\phi
_\lambda$ of the Young diagram. The coordinate system $(u,v)$
corresponding to the Russian convention and the coordinate system
$(x,y)$ corresponding to the French convention are shown.}
\label{fig:french}
\end{figure}
%
%
%
%

We can also consider a scaled version of $\phi_\lambda$ given by
\[
\tilde{\phi}_\lambda(u) = \frac{1}{\sqrt{n}} \phi_\lambda (\sqrt
{n} u ).
\]
This scaling leads to a diagram with constant area (equal to $2$, in
this coordinate system), and is naturally suitable for dealing with
asymptotic questions about the shape $\lambda$.

The following version of the limit shape theorem with an explicit error
estimate is a slight variation of the one given by \citet
{VershikKerov1985} [it follows from the numerical estimates in
Section~3 of that paper by modifying some parameters in an obvious way;
see also \citet{Romik2013}, Chapter~1].

\begin{thmm}[(The limit shape of Plancherel-random Young diagrams)]
\label{thmm-plancherel-limitshape}
Define the function $\Omega_*\dvtx \R\to[0,\infty)$ by
%
\begin{equation}
\label{eq:omegastar} \Omega_*(u) = %
\cases{\displaystyle \frac{2}{\pi} \biggl[ u
\sin^{-1} \biggl(\frac{u}{2} \biggr) +\sqrt{4-u^2}
\biggr], & \quad $\mbox{if }-\!2 \le u\le2$, \vspace *{2pt}
\cr
|u|, & \quad $\mbox{otherwise.}$}
\end{equation}
Let $\varnothing= \Lambda_0 \nearrow\Lambda_1 \nearrow\Lambda_2
\nearrow\ldots$ denote the
Plancherel growth process as in \eqref{eq:plancherel-growth}. Then
there exists a constant $C>0$ such that for any $\varepsilon>0$, we have
\[
\prob \Bigl( \sup_{u\in\R} \bigl|\tilde{\phi}_{\Lambda_n} (u ) -
\Omega_*(u) \bigr| > \varepsilon \Bigr) = O \bigl(e^{-C \sqrt
{n}} \bigr) \qquad\mbox{as }n
\to\infty.
\]
\end{thmm}

See Figure~\ref{fig-plancherel-limitshape} for an illustration of the
profile of a typical Plancherel-random diagram shown together with the
limit shape.

\begin{figure}

\includegraphics{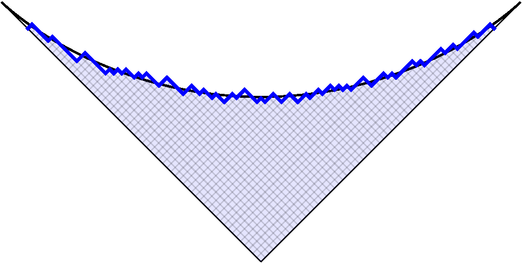}

\caption{The limit shape $v=\Omega_*(u)$ superposed with the (rescaled)
profile $\tilde{\phi}_{\Lambda_n}$ of a simulated
Plancherel-distributed random Young diagram of order $n=1000$.}
\label{fig-plancherel-limitshape}
\end{figure}

\subsection{The transition measure}
Next, we recall the concept of the \demph{transition measure} of Young
diagrams and its extension to smooth shapes, developed by \citeauthor{Kerov1993}
(\citeyear{Kerov1993,Kerov1999}) see also [\citet{Romik2004}]. For a Young
diagram $\lambda\in\partitions{n}$, this is defined simply as the
probability measure on the set of diagrams $\nu\in\partitions{n+1}$
such that $\lambda\nearrow\nu$ (or equivalently on the set of boxes
that can be attached to $\lambda$ to form a new Young diagram) given by
\eqref{eq:transition-rule}. Kerov observed that as a sequence of
diagrams approaches in the scaling limit a smooth shape (in a sense
similar to that of the limit shape theorem above), the transition
measures also converge, and thus depend continuously, in an appropriate
sense, on the shape. For the limit shape $\Omega_*$, which is the only
one we will need to consider, the transition measure (in this limiting
sense) is the semicircle distribution. The precise result, paraphrased
slightly to bring it to a form suitable for our application, is as follows.

\begin{thmm}[(Transition measure of $\plancherel_n$-random Young diagrams)]
\label{thmm-semicircle-transition-measure}
For each $n\ge1$, denote by $\mathbf{d}_n = (a_n, b_n)$ the random
position of the box that was added to the random Young diagram $\Lambda
_{n-1}$ in \eqref{eq:plancherel-growth} to obtain $\Lambda_{n}$. Then
we have the convergence in distribution
%
\begin{equation}
\label{eq:conv-dist-transition} \frac{1}{\sqrt{n}} (a_n-b_n,a_n+b_n)
\convdist ( U,V ) \qquad\mbox{as }n\to\infty,
\end{equation}
where $U$ is a random variable with the semicircle distribution $\SClaw
$ on $[-2,2]$, and $V=\Omega_*(U)$.
In other words, in the $(u,v)$-coordinates, the position of the box
added according to the transition measure \eqref{eq:transition-rule}
has in the limit a $u$-coordinate distributed according to the
semicircle distribution and its $v$-coordinate is related to its
$u$-coordinate by the function $\Omega_*$.
\end{thmm}

\begin{pf}
This follows immediately by combining Theorem~\ref
{thmm-plancherel-limitshape} with the fact that the transition measure
of the curve $\Omega_*$ is $\SClaw$, and the fact
that the mapping taking a continual Young diagram to its transition
measure is continuous in the uniform norm (with the weak topology on
measures on $\R$). For the proofs of these facts, refer to
\citeauthor{Kerov1993} (\citeyear{Kerov1993,Kerov1999})
[see also \citet{Romik2004}].
\end{pf}

\subsection{Weak asymptotics for the jeu de taquin path}
\label{sec-weak-asym}
As an application of these ideas, we prove the convergence in
distribution of the directions along the \jdt path in the infinite
Plancherel-random tableau. This is a weaker version of Theorem~\ref
{thmm-straight-line} that identifies the distribution \eqref
{eq:theta-dist} but does not include the fact that the \jdt path is
asymptotically a straight line. It will be convenient to work with a
modified version of the \jdt path in which time is reparameterized to
correspond more closely to the Plancherel growth process \eqref
{eq:plancherel-growth}. We call this the \demph{natural
parameterization} of the \jdt path. To define it, let $\jpathlazy_n
=\jpath_{K(n)}$ denote the position of the last box in the \jdt path
contained in the diagram $\Lambda_n$, that is, $K(n)$ is the maximal
number $k$ such that $t_{\jpath_k}$, the tableau entry in position
$\jpath_k$, is $\le n$. The reparameterized sequence $(\jpathlazy
_n)_{n\ge1}$ is simply a slowed-down or ``lazy'' version of the \jdt
path: as $n$ increases, it either jumps to its right or up if in the
Plancherel growth process a box was added in one of those two
positions, and stays put at other times.

\begin{thmm} \label{thmm-jdt-conv-dist}
Let $T$ be a Plancherel-random infinite Young tableau with a
naturally-parameterized \jdt path $(\jpathlazy_n)_{n=1}^\infty$. We
have the convergence in distribution
\[
\frac{\jpathlazy_n}{\Vert\jpathlazy_n\Vert} \convdist (\cos\Theta,\sin\Theta)
 \qquad\mbox{as }n\to\infty,
\]
where $\Theta$ is the random variable defined by \eqref{eq:theta-dist}.
\end{thmm}

To show this, we need the following lemma, which also gives one
possible explanation for why the slowed-down parameterization may be
considered natural (another explanation, related to the ``second-class
particle'' interpretation, is suggested in Section~\ref{sec-particle-systems}).

\begin{lem} \label{lem-useful-eq-dist}
For any fixed $n\geq1$, we have the equality in distribution
\[
\jpathlazy_n \equalindist\mathbf{d}_n.
\]
\end{lem}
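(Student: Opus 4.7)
The plan is to compute the conditional distribution of $\jpathlazy_n$ given the shape $\Lambda_n$ and show it agrees with that of $\mathbf{d}_n$; since $\Lambda_n$ has the same Plancherel marginal in both cases, the equality in law of $\jpathlazy_n$ and $\mathbf{d}_n$ will then follow immediately.

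First, I would identify $\jpathlazy_n$ with an object of finite combinatorics. Writing $T_n$ for the finite sub-tableau of $T$ consisting of the entries $\le n$ (which has shape $\Lambda_n$), I claim that $\jpathlazy_n$ coincides with the terminal box of the finite \jdt path of $T_n$, so that $j(T_n)$ has shape $\Lambda_n \setminus \{\jpathlazy_n\}$. This is because every comparison in the rule \eqref{eq:jdt-path-def} between an entry inside $\Lambda_n$ (which is $\le n$) and an entry outside (which is $>n$) favors the inside entry; hence the infinite \jdt path of $T$ agrees with the finite \jdt path of $T_n$ for as long as it stays inside $\Lambda_n$, and exits $\Lambda_n$ precisely at that terminal box.

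Next, I would use the standard fact that, conditional on $\Lambda_n = \lambda$, the tableau $T_n$ is uniformly distributed on $\SYT_\lambda$. This is immediate from the telescoping computation of Section~\ref{subsec-proof-measure-pres}: iterating \eqref{eq:transition-rule} shows that every path in the Young graph from $\emptyset$ to $\lambda$ has the same probability $f^\lambda/n!$. Applying the bijection $j_\lambda$ of Lemma~\ref{lem:jdt-is-a-bijection}, for every removable box $\square$ of $\lambda$,
$$\prob\bigl(\jpathlazy_n = \square \,\bigm|\, \Lambda_n = \lambda\bigr) = \frac{\#\{t\in\SYT_\lambda : j(t)\in\SYT_{\lambda\setminus\{\square\}}\}}{f^\lambda} = \frac{f^{\lambda\setminus\{\square\}}}{f^\lambda}.$$

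Finally, I would verify that $\mathbf{d}_n$ has the same conditional law. A Bayes' rule computation using \eqref{eq:transition-rule} and the Plancherel formula \eqref{eq:plancherel} yields, for each $\mu \nearrow \lambda$,
$$\prob\bigl(\Lambda_{n-1} = \mu \,\bigm|\, \Lambda_n = \lambda\bigr) = \frac{f^\mu}{f^\lambda},$$
which, setting $\square = \lambda \setminus \mu$, matches the preceding display. The two conditional distributions coincide, and so $\jpathlazy_n \equalindist \mathbf{d}_n$. There is no real obstacle to this argument; the only subtlety worth flagging is the identification in the first step of the infinite \jdt path with the finite one inside $\Lambda_n$, which amounts to observing that the rule \eqref{eq:jdt-path-def} is purely local.
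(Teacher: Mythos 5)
Your proof is correct, and it takes a genuinely different route from the one in the paper. You compute both conditional laws given $\Lambda_n=\lambda$ directly: the uniformity of the recording tableau on $\SYT_\lambda$ together with the bijection $j_\lambda$ of Lemma~\ref{lem:jdt-is-a-bijection} gives $\prob(\jpathlazy_n=\square\mid\Lambda_n=\lambda)=f^{\lambda\setminus\square}/f^\lambda$, and a Bayes computation on the transition rule \eqref{eq:transition-rule} gives the same for $\mathbf{d}_n$; your preliminary identification of $\jpathlazy_n$ with the terminal box of the finite \jdt path of $T_n$ is also right, since the rule \eqref{eq:jdt-path-def} always prefers an entry $\le n$ over one $>n$. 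The paper instead argues via $\RSK$: by Lemma~\ref{lem:factor-map} the box $\jpathlazy_n$ is (in law) the difference $\Lambda_n\setminus\widetilde\Lambda_{n-1}$ of the $\RSK$ shapes of $(X_1,\dots,X_n)$ and $(X_2,\dots,X_n)$, and by the reverse-and-flip symmetry of Lemma~\ref{lem:symmetries-rsk}\ref{symmetry-reverse-flip} this difference \emph{equals}, box by box, the last box added in the Plancherel growth process driven by $(1-X_n,\dots,1-X_1)$. So the paper's argument produces an explicit coupling under which the two boxes coincide almost surely (not merely in distribution), and it fits the $\RSK$-centric machinery used elsewhere in the paper; your argument is more elementary, avoids Greene's theorem entirely, and isolates the purely Markovian reason the identity holds, namely that the reversed Plancherel growth chain and the \jdt removal step have the same backward transition kernel $f^\mu/f^\lambda$. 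Either proof is complete.
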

\begin{pf}
Let $X_1,\ldots,X_n$ be i.i.d. $U(0,1)$ random variables. Let $\Lambda
_n$ be the Young diagram associated by $\RSK$ to the sequence
$(X_1,X_2,\ldots,X_n)$ and let $\widetilde{\Lambda}_{n-1}$ be the Young
diagram associated to $(X_2,X_3,\ldots,X_n)$.
From Lemmas \ref{lem-push-forward} and \ref{lem:factor-map}, we get that
\[
\jpathlazy_n \equalindist\Lambda_n \setminus\widetilde{
\Lambda }_{n-1}.
\]

Let $(Y_1,\ldots,Y_n)=(1-X_n, 1-X_{n-1}, \ldots, 1-X_1)$. In this way,
$Y_1,\ldots,Y_n$ are i.i.d. $U(0,1)$ random variables, and thus the path
in the Young graph $\varnothing=M_0 \nearrow\cdots\nearrow M_n$
corresponding to the sequence via $\RSK$
is distributed according to the Plancherel measure. It follows that
\[
\mathbf{d}_n \equalindist M_n \setminus
M_{n-1}.
\]

Applying Lemma~\ref{lem:symmetries-rsk}(c) for
the sequence $(X_1,\ldots,X_n)$ and for the sequence $(X_2,\ldots,X_n)$,
we get however that $M_n = \Lambda_n$ and $M_{n-1}=\widetilde{\Lambda
}_{n-1}$, which completes the proof.
\end{pf}

\begin{pf*}{Proof of Theorem~\ref{thmm-jdt-conv-dist}}
Define random angles $(\theta_n)_{n=1}^\infty$ by
\[
\mathbf{d}_n = (a_n,b_n)= \Vert
\mathbf{d}_n \Vert ( \cos \theta _n, \sin
\theta_n ),
\]
where $0 \le\theta_n \le\pi/2$ for $n\ge1$.
By Lemma~\ref{lem-useful-eq-dist}, it is enough to show that $\theta_n
\mathop{\rightarrow}\limits^{\mathcal{D}}\Theta$, or equivalently
that
%
\begin{equation}
\label{eq:w-lim-dist} \cot(\pi/4-\theta_n) \convdist\cot(\pi/4-\Theta) =
\frac{2}{\pi} \biggl( \sin^{-1} \biggl(\frac{W}{2} \biggr)+
\frac
{\sqrt {4-W^2}}{W} \biggr),
\end{equation}
where $W \sim\SClaw$ as in Theorem~\ref{thmm-straight-line}.
But note that
\[
\cot(\pi/4-\theta_n) = \frac{a_n+b_n}{a_n-b_n},
\]
the ratio of the $v$- and $u$- coordinates of $\mathbf{d}_n$, since the
$\pi/4$ term corresponds exactly to the angle of rotation between
$(x,y)$ and $(u,v)$ coordinates. So,
by \eqref{eq:conv-dist-transition}, $\cot(\pi/4-\theta_n) \mathop{\rightarrow}\limits^{\mathcal{D}}
V/U=\Omega_*(U)/U$, where $a_n, b_n, V$ and $U$ are defined in
Theorem~\ref{thmm-semicircle-transition-measure}, and it is easy to see
from the definition of $\Omega_*(\cdot)$ in \eqref{eq:omegastar} that
this is exactly the distribution appearing on the right-hand side
of \eqref{eq:w-lim-dist}.
\end{pf*}

Note that the proof above gives a simple geometric characterization of
the distribution of the limiting random angle $\Theta$. Namely, in the
Russian coordinate system we choose a random vector $(U,V)$ that lies
on the limit shape by drawing $U$ from the semicircle distribution
$\SClaw$, and taking $V=\Omega_*(U)$. The random variable $\Theta$ is
the angle subtended between the ray $\{u=v>0\}$ (which corresponds to
the positive $x$-axis) and the ray pointing from the origin to $(U,V)$.

\section{Plactic Littlewood--Richardson rule, Jucys--Murphy elements
and the semicircle distribution}
\label{sec:representation-voodoo}

\subsection{Pieri growth}
\label{subsec:pieri-growth}

Our goal in this section will be to prove a technical result that we
will need for the proofs of Theorems \ref{thmm-straight-line} and \ref
{thmm-inverse-rsk}. The result concerns a particular way of growing a
Plancherel-random Young diagram of order $n$ by $k$ additional boxes.
We refer to this type of growth as \demph{Pieri growth}, because of its
relation to the Pieri rule from algebraic combinatorics. This is
defined as follows. Fix $n, k\ge1$, and consider the following way of
generating a pair $\Lambda_n \subset\Gamma_{n+k}$ of random Young
diagrams, where $\Lambda_n \in\partitions{n}$ and $\Gamma_{n+k} \in
\partitions{n+k}$: first, take a sequence $A_1,\ldots,A_n$ of
i.i.d. random variables with the $U(0,1)$ distribution, and define
$\Lambda_n$ as the $\RSK$ shape associated with the input sequence
$A_1,\ldots,A_n$ (so, $\Lambda_n$ is distributed according to the
Plancherel measure $\plancherel_n$ of order $n$). Next, take a sequence
$B_1,\ldots,B_k$ of i.i.d. random variables with the $U(0,1)$
distribution, \emph{conditioned to be in increasing order} [i.e., the
vector $(B_1,\ldots,B_k)$ is chosen uniformly at random from the set
$\{ (b_1,\ldots,b_k) \dvtx 0\le b_1\le\cdots\le b_k \le1 \}$], then
let $\Gamma_{n+k}$ be the $\RSK$ shape associated with the concatenated
sequence $(A_1,\ldots,A_n,B_1,\ldots,B_k)$.

Let $\nu\in\partitions{k}$ be a Young diagram with $k$ boxes or, more
generally, let $\nu=\lambda\setminus\mu$ (for $\lambda\in
\partitions
{n+k}$, $\mu\in\partitions{n}$) be a skew Young diagram with $k$
boxes. Let
\[
\Box_1=(i_1,j_1),\qquad \Box_2=(i_2,j_2),\qquad
\ldots, \qquad \Box_k=(i_k,j_k)
\]
denote the positions of its boxes (arranged in some arbitrary order).
For each $1\le\ell\le k$, we will call $u_\ell= i_\ell- j_\ell$ the
\demph{$u$-coordinate} of the box $\Box_\ell$.
(In the literature, such a $u$-coordinate is usually called the \demph
{content} of $\Box_\ell$, but in order to avoid notational collisions
with the content of a box of a Young tableau, we decided not to use
this term in this meaning.)
The sequence $(u_1,\ldots,u_k)$ of the $u$-coordinates of the boxes of
$\nu$ will turn out to be very useful.

\begin{thmm}
\label{thmm:growth-deterministic}
For each $n,k$, let $u_1,\ldots,u_k$ be the $u$-coordinates of the boxes
of\/ $\Gamma_{n+k}\setminus\Lambda_n$, where the Pieri growth pair
$\Lambda_n \subset\Gamma_{n+k}$ is defined above.
Let $m_{n,k}$ denote the empirical measure of the $u$-coordinates
$u_1,\ldots,u_k$ (scaled by a factor of $n^{-1/2}$), given by
\[
m_{n,k} = \frac{1}{k} \sum_{\ell=1}^k
\delta_{n^{-1/2} u_\ell},
\]
where for a real number $x$, symbol $\delta_x$ denotes a delta measure
concentrated at $x$.
Let $k=k(n)$ be a sequence such that $k=o(\sqrt{n})$ as $n\to\infty$.
Then as $k\to\infty$, the random measure $m_{n,k}$ converges weakly in
probability to the semicircle distribution
$\SClaw$, and furthermore, for any $\varepsilon>0$ and any $u\in\R$ we
have the estimate
\[
\prob \bigl(\bigl |F_{m_{n,k}}(u) - \FSC(u) \bigr| > \varepsilon \bigr) = O \biggl(
\frac{1}{k}+\frac{k}{\sqrt{n}} \biggr)\qquad \mbox{as }n\to\infty,
\]
where $\FSC$ denotes the cumulative distribution function of the
semicircle distribution $\SClaw$, and $F_{m_{n,k}}$ denotes the
cumulative distribution function of $m_{n,k}$.
\end{thmm}

In order to prove this result, we will apply the ``plactic'' version of
the Littlewood--Richardson rule (Theorem~\ref{thmm:plactic-pieri-rule})
which, roughly speaking, says that the probabilistic behavior of the
$\RSK$ shape associated to a concatenation of two random sequences with
prescribed $\RSK$ shapes coincides with the probabilistic behavior of a
random irreducible component of a certain representation of the
symmetric group. In this way, the quantities describing the
probabilistic properties of the random probability measure $m_{n,k}$
can be calculated by the machinery of representation theory, and
specifically the Jucys--Murphy elements. We present the necessary tools below.

\subsection{The symmetric group and its representation theory}
Let $n,k\geq1$ be given. In the following, we will view $S_n$ as the
group of permutations of the set $\{1,\ldots,n\}$, $S_k$ as the group of
permutations of the set $\{n+1,\ldots,n+k\}$ and $S_{n+k}$ as the group
of permutations of $\{1,\ldots,n+k\}$. In this way $S_n\times S_k$ is
identified with the subgroup of $S_{n+k}$ consisting of those
permutations of $\{1,\ldots,n+k\}$ which leave the sets $\{1,\ldots,n\}$
and $\{n+1,\ldots,n+k\}$ invariant. In this article, we will consider
only the groups which have one of the above forms. We review below some
basic facts from representation theory, tailored for this particular setup.

For a representation $\rho\dvtx G\rightarrow\End W$ of some finite group
$G$, we define its \demph{normalized character}
\[
\chi^W(g) = \frac{\Tr\rho(g)}{(\mathrm{dimension\ of}\ W)} \qquad\mbox{for } g\in G.
\]
The group algebra $\C(G)$ can be alternatively viewed as the algebra of
functions $\{f\dvtx G\rightarrow\C\}$; as multiplication we take the
convolution of functions. For any element $f\in\C[G]$ of the group
algebra, we will denote by $\chi^W(f)$ the extension of the character
by linearity:
\[
\chi^W(f) =\sum_{g\in G} f(g)
\chi^W(g).
\]

For a modern approach to the representation theory of symmetric groups,
we refer to the monograph of
\citet{Ceccherini-SilbersteinScarabottiTolli2010}. There is a bijective
correspondence between the set of (equivalence classes of) irreducible
representations of the symmetric group $S_n$ and the set $\partitions
{n}$ of Young diagrams with $n$ boxes. We denote by $V^\lambda$ the
irreducible representation $\rho^\lambda\dvtx  S_n \rightarrow\End
V^\lambda
$ which corresponds to $\lambda\in\partitions{n}$. The dimension of
the space $V^\lambda$ is equal to $f^\lambda$, the number of standard
Young tableaux of shape $\lambda$. We use the shorthand notation $\chi
^\lambda$ for the corresponding character $\chi^{V^\lambda}$.

Two representations of the symmetric groups will play a special role in
the following. The \demph{trivial representation} $V^{\trivial}_{S_k}$
of $S_k$ is the one for which the vector space $V^{\trivial}_{S_k}$ is
one-dimensional and any group element $g\in S_k$ acts on it trivially
by identity. The corresponding character
\[
\chi^{\trivial}_{S_k}(g) = 1
\]
is constantly equal to $1$. The trivial representation is irreducible
and corresponds to the Young diagram $(k)$ which has only one row; in
other words $V^{\trivial}_{S_k}=V^{(k)}$.
The \demph{regular representation} $V^{\regular}_{S_n}$ of $S_n$ is the
one for which the vector space $V^{\regular}_{S_n}=\C(S_n)$ is just the
group algebra and the action is given by multiplication from the left.
The corresponding character
\[
\chi^{\regular}_{S_n}(g) = \delta_{e}(g)= %
\cases{1, &\quad $ \mbox{if } g=e$,\vspace*{2pt}
\cr
0, &\quad $\mbox{otherwise},$} %
\]
is equal to the delta function at the group unit.

\subsection{Isomorphism between \texorpdfstring{$\mathcal{C}(\mathbb{Y}_{n})$}{C(Yn)} and
\texorpdfstring{$Z\mathbb{C}(S_n)$}{ZC(Sn)}}
\label{subsec:isomorphism}
For a Young diagram $\lambda\in\partitions{n}$, we define
\[
\projection_{\lambda} = \frac{(f^\lambda)^2}{n!} \chi^\lambda.
\]
The elements $(\projection_\lambda\dvtx  \lambda\in\partitions{n})$
form a
linear basis of the center $Z\C[S_n]$ of the group algebra.
They form a commuting family of orthogonal projections, in other words
\[
\projection_\lambda\projection_\mu= %
\cases{\projection_\lambda,& \quad$\mbox{if }\lambda=\mu$, \vspace*{2pt}
\cr
0, & \quad $\mbox
{otherwise,}$} %
\]
which shows that
\[
( f\dvtx \partitions{n}\rightarrow\C ) \mapsto\sum_{\lambda
\in
\partitions{n}}
f(\lambda) \projection_\lambda\in Z\C(S_n)
\]
is an isomorphism between the commutative algebra $\mathcal
{C}(\partitions{n})$ of functions on $\partitions{n}$ (with pointwise
addition and multiplication) and the center $Z\C(S_n)$ of the symmetric
group algebra.
Thanks to this isomorphism any $f\in\mathcal{C}(\partitions{n})$ can be
identified with an element of the center $Z\C(S_n)$ which for
simplicity will be denoted by the same symbol.

The inverse isomorphism associates to $f\in Z\C(S_n)$ a function on
Young diagrams which is explicitly given by
%
\begin{equation}
\label{eq:inverse-isopmorphism} \lambda\mapsto\chi^\lambda(f).
\end{equation}

\subsection{The random Young diagram associated to a representation}
\label{subsec:random-young-diagram-to-representation}
For a representation $W$ of the symmetric group $S_n$ we consider its
decomposition into irreducible components:
%
\begin{equation}
\label{eq:decomposition} W= \bigoplus_{\lambda\in\partitions{n}} m_\lambda
V^\lambda,
\end{equation}
where $m_\lambda\in\N\cup\{0\}$ denotes the multiplicity.
The representation $W$ induces a probability measure on $\partitions
{n}$ given by
\[
\mathbb{P}_W(\lambda) = \frac{m_\lambda (\mathrm{dimension\ of}\ V^\lambda
)}{(\mathrm{dimension\ of}\ W)} \qquad\mbox{for } \lambda\in
\partitions{n}.
\]
In other words, the representation $W$ of $S_n$ gives rise to a random
Young diagram $\Lambda$ with $n$ boxes; we will say that $\Lambda$ is
the \demph{random Young diagram associated to the representation $W$}.
The probability of $\lambda$ is proportional to the total dimension of
all irreducible components of $W$ which are of type $[\lambda]$.
Alternatively, we can select some linear basis $e_1,\ldots,e_l$ of the
vector space $W$ in such a way that each basis vector $e_i$ belongs to
one of the summands in \eqref{eq:decomposition}. With the uniform
measure we randomly select a basis vector $e_i$; this
vector corresponds to a Young diagram $\Lambda$ which has the desired
distribution.

This choice of probability measure on $\partitions{n}$ has an advantage
that the corresponding expected value of random variables has a very
simple representation-theoretic interpretation. Namely,
for $f\in\mathcal{C}(\partitions{n})$ [which under the identification
from Section~\ref{subsec:isomorphism} can be seen as $f\in Z\C(S_n)$],
it is immediate from the definitions that
%
\begin{equation}
\label{eq:isomorphism-mean-value} \E_W f(\Lambda) = \chi^W (f),
\end{equation}
where $\E_W$ denotes the expectation with respect to the measure
$\mathbb{P}_W$.

An important example is the case when $W=V^{\regular}_{S_n}$ is the
regular representation of the symmetric group; then the corresponding
probability distribution on $\partitions{n}$ is the Plancherel measure
\eqref{eq:plancherel}.

\subsection{Outer product and Littlewood--Richardson coefficients}
If $V$ is a representation of $S_n$ and $W$ is a representation of
$S_k$ we denote by
\[
V\oproduct W= (V \otimes W) \uparrow_{S_n \times S_k}^{S_{n+k}}
\]
their \demph{outer product}. It is a representation of $S_{n+k}$ which
is induced from the tensor representation $V\otimes W$ of the Cartesian
product $S_n\times S_k$.

There are several equivalent ways to define Littlewood--Richardson
coefficients but for the purposes of this article it will be most
convenient to use the following one. For Young diagrams $\lambda\in
\Young{n}$, $\mu\in\Young{k}$, $\nu\in\Young{n+k}$, we define
the \demph
{Littlewood--Richardson coefficient} $c_{\lambda,\mu}^{\nu}$ as the
multiplicity of the irreducible representation $V^\lambda\otimes V^\mu$
of the group $S_n\times S_k$ in the restricted representation $V^\nu
\downarrow^{S_{n+k}}_{S_n\times S_k}$.

Equivalently, $c_{\lambda,\mu}^{\nu}$ is equal to the multiplicity of
the irreducible representation $V^\nu$ in the outer product $V^\lambda
\oproduct V^\mu$. It follows that the random Young diagram associated
to the outer product $V^\lambda\oproduct V^\mu$ has the distribution
%
\begin{equation}
\label{eq:distribution-outer-product} \mathbb{P}_{V^\lambda\oproduct V^\mu}(\nu) =
\frac{1}{\mathrm{dimension\
of\ }V^\lambda\oproduct V^\mu}
c_{\lambda,\mu}^{\nu} f^\nu.
\end{equation}

\subsection{The plactic Littlewood--Richardson rule}

The following result is essentially a reformulation of the usual form
of the plactic Littlewood--\break Richardson rule [\citet{Fulton1997}, Chapter~5].

\begin{thmm}
\label{thmm:plactic-pieri-rule}
Let the Young diagrams $\lambda\in\Young{n}$, $\mu\in\Young{k}$ be
fixed. Let $\mathbf{A}=(A_1,\ldots,A_n)\in[0,1]^n$ and $\mathbf
{B}=(B_1,\ldots,B_k)\in[0,1]^k$ be random sequences sampled according to
the product of Lebesgue measures, conditioned so that $\lambda$,
respectively $\mu$, is the $\RSK$ shape associated to $\mathbf{A}$,
respectively $\mathbf{B}$.
Then the distribution of the $\RSK$ shape associated to the
concatenated sequence $\mathbf{A} \mathbf{B}$ coincides with the
distribution \eqref{eq:distribution-outer-product} of the random Young
diagram associated to the representation $V^\lambda\oproduct V^\mu$.
\end{thmm}

\begin{pf}
Let $\A=[0,1]$ be the alphabet (linearly ordered set) of the numbers
from the unit interval. For the purpose of the following definition, we
consider $\RSK_n\dvtx \A^n\rightarrow\partitions{n}$ as a map which to words
of length $n$ associates the corresponding $\RSK$ shape. For a Young
diagram $\lambda\in\partitions{n}$, we define the formal linear
combination
\[
\strangeSchur_\lambda= \frac{n!}{(f^\lambda)^2} \mathop{\sum
_{\mathbf
{A}=(A_1,\ldots,A_n)\in\A^n, }}_{ \RSK_n(\mathbf{A})=\lambda} \mathbf{A}
\]
of all words for which the $\RSK$ shape is equal to $\lambda$. This
formal linear combination can be alternatively viewed as a function
$\strangeSchur_\lambda\dvtx  \A^n \rightarrow\R$; then it becomes a density
of a probability measure on $\A^n$. This measure is the probability
distribution of a random sequence $\mathbf{A}$ with the uniform
distribution on $\A^n$, conditioned to have the $\RSK$ shape equal to
$\lambda$.

There are $f^\lambda$ possible choices of a recording tableau of shape
$\lambda$. It follows that the plactic class corresponding to a given
insertion tableau of shape $\lambda$ consists of $f^\lambda$ elements
of $\A^n$. Therefore, the embedding of $\A^n$ into the plactic monoid
maps $\strangeSchur_\lambda$ to $\frac{n!}{f^\lambda} S_\lambda$, where
$S_\lambda$ is the plactic Schur polynomial, defined as
\[
S_\lambda=\sum_{\mathrm{shape}(P)=\lambda} P,
\]
where the sum runs over all increasing tableaux $P$ of shape $\lambda$
and with the entries in the alphabet $\A$.

We now use one of the forms of the plactic Littlewood--Richardson rule
[\citet{Fulton1997}, page 63], which says that for arbitrary $\lambda\in
\partitions{n}$, $\mu\in\partitions{k}$, we have that
\[
S_\lambda S_\mu= \sum_{\nu\in\partitions{n+k}}
c_{\lambda,\mu
}^{\nu} S_{\nu},
\]
where the product is taken in the plactic monoid. Therefore,
%
\begin{equation}
\label{eq:plactic-LR} \strangeSchur_\lambda\strangeSchur_\mu=
\frac{1}{{n+k\choose k}
f^\lambda f^\mu} \sum_{\nu\in\partitions{n+k}} c_{\lambda,\mu
}^{\nu}
f^\nu\strangeSchur_{\nu}.
\end{equation}

If we interpret $\strangeSchur_{\lambda}$ and $\strangeSchur_{\mu}$ as
densities of probability measures on $\A^n$ and $\A^k$, respectively,
and as a product we take concatenation of sequences, then
$\strangeSchur_{\lambda} \strangeSchur_{\lambda}$ can be
interpreted as
a density of a probability measure on $\A^{n+k}$. In this way, \eqref
{eq:plactic-LR} can be interpreted as follows: the left-hand side in
the plactic monoid is equal to the distribution of the $\RSK$ shape
associated to the concatenated sequence $\mathbf{A} \mathbf{B}$.
The probability distribution of this $\RSK$ shape is given by the
coefficients standing at the right-hand side:
\[
\prob \bigl(\RSK_n(\mathbf{A}\mathbf{B})=\nu \bigr) =
\frac
{1}{{n+k\choose k} f^\lambda f^\mu} c_{\lambda,\mu}^{\nu} f^\nu,
\]
which coincides with \eqref{eq:distribution-outer-product}, as required.
\end{pf}

\subsection{Jucys--Murphy elements and $u$-coordinates of boxes}

We define the \demph{Jucys--Murphy elements} as the elements of the
symmetric group algebra
\[
X_i = (1,i)+\cdots+(i-1,i) \in\C(S_{n})
\]
given for each $1\leq i \leq n$ by the formal sum of transpositions
interchanging the element $i$ with smaller numbers.
The following lemma summarizes some fundamental properties of
Jucys--Murphy elements [\citet{Jucys1974}].

\begin{lem}
\label{lem:Jucys-Murphy-1}
Let $\lambda\in\Young{n}$ be a Young diagram, and let $u_1,\ldots,u_n$
be the $u$-coordinates of its boxes. Let $P(x_1,\ldots,x_n)$ be a
symmetric polynomial in $n$ variables. Then:
\begin{longlist}[1.]
\item[1.]$P(X_1,\ldots,X_n) \in\C(S_n)$ belongs to the center of the
group algebra.
\item[2.]We denote by $\rho^\lambda\dvtx S_n\rightarrow V^\lambda$ the
irreducible representation of the symmetric group $S_n$ corresponding
to the Young diagram $\lambda$; then the operator $\rho^\lambda (
P(X_1,\ldots,X_n)  )$ is a multiple of the identity operator, and
hence can be identified with a complex number. The value of this number
is equal to
\[
\chi^\lambda \bigl( P(X_1,\ldots,X_n) \bigr) =
P(u_1,\ldots,u_n).
\]
\end{longlist}
\end{lem}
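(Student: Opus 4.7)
The plan is to reduce both parts of the lemma to the classical spectral description of the Jucys-Murphy elements in the \emph{Young} (a.k.a.\ \emph{Gelfand-Tsetlin}) basis of $V^\lambda$. Specifically, I would invoke the following result of Jucys (later rederived by Okounkov-Vershik): $V^\lambda$ admits a distinguished basis $(v_T)_T$ indexed by the standard Young tableaux $T$ of shape $\lambda$ with the property that each $v_T$ is a simultaneous eigenvector of the operators $\rho^\lambda(X_1),\ldots,\rho^\lambda(X_n)$, with
$$ \rho^\lambda(X_i)\,v_T \;=\; c_i(T)\,v_T, $$
where $c_i(T)$ denotes the $u$-coordinate of the box of $\lambda$ containing the entry $i$ of $T$. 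As a preparatory step one verifies the Jucys relation $X_iX_j=X_jX_i$ by a short direct computation on the transposition expansion of $X_i$, which makes simultaneous diagonalization possible in principle.

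Granting this spectral fact, part~(2) is immediate. For any standard tableau $T$ of shape $\lambda$, the entries $1,\ldots,n$ fill the boxes of $\lambda$ bijectively, so the multiset $\{c_1(T),\ldots,c_n(T)\}$ is exactly the multiset $\{u_1,\ldots,u_n\}$ of $u$-coordinates of boxes of $\lambda$. Since $P$ is symmetric,
$$ \rho^\lambda\!\bigl(P(X_1,\ldots,X_n)\bigr)\,v_T \;=\; P\bigl(c_1(T),\ldots,c_n(T)\bigr)\,v_T \;=\; P(u_1,\ldots,u_n)\,v_T, $$
independently of $T$. Hence $\rho^\lambda\!\bigl(P(X_1,\ldots,X_n)\bigr) = P(u_1,\ldots,u_n)\cdot\Id_{V^\lambda}$, and the scalar coincides with the normalized trace $\chi^\lambda\!\bigl(P(X_1,\ldots,X_n)\bigr)$, which is exactly the identity to be proved.

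Part~(1) then follows as a formal consequence via the Wedderburn decomposition $\C(S_n)\cong\bigoplus_{\lambda\in\partitions{n}}\End(V^\lambda)$: an element of $\C(S_n)$ lies in the center if and only if each irreducible representation $\rho^\lambda$ sends it to a scalar operator. Since we have just verified this for $P(X_1,\ldots,X_n)$ for every $\lambda\in\partitions{n}$, centrality is immediate.

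The main obstacle is the spectral statement for the Jucys-Murphy elements in Young's basis; this is the only genuinely nontrivial representation-theoretic input and the rest is bookkeeping. If one wished to avoid invoking Young's basis just for part~(1), an alternative route is to prove inductively in $n$ that the elementary symmetric polynomials $e_k(X_1,\ldots,X_n)$ are central, using the recursion $e_k(X_1,\ldots,X_n) = e_k(X_1,\ldots,X_{n-1}) + X_n\,e_{k-1}(X_1,\ldots,X_{n-1})$ together with the commutativity of the $X_i$ and the centrality of the class sum $\sum_{i\le n}X_i$ (the sum of all transpositions in $S_n$). However, obtaining the explicit eigenvalue formula in part~(2) appears to genuinely require the seminormal basis calculation, so invoking it once at the outset is the most economical approach.
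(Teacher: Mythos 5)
Your argument is correct. The paper itself gives no proof of this lemma --- it simply cites it to Jucys --- and your derivation is the standard one that the citation points to: the simultaneous eigenbasis $(v_T)$ of the Jucys--Murphy elements in $V^\lambda$ with eigenvalues the contents $c_i(T)$ immediately yields part (2) by symmetry of $P$ (the multiset of contents being independent of $T$), and part (1) then follows from the Wedderburn decomposition $\C(S_n)\cong\bigoplus_{\lambda\in\partitions{n}}\End(V^\lambda)$, under which centrality is equivalent to acting as a scalar in every irreducible representation.
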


\subsection{Growth of Young diagrams and Jucys--Murphy elements}

This section is devoted to the proof of the following result which will
be essential for the proof of Theorem~\ref{thmm:growth-deterministic}.

\begin{thmm}
\label{thmm:how-to-calculate-moments}
We keep the notation from Section~\ref{subsec:pieri-growth}, except
that the $u$-coordinates of the boxes of $\Gamma_{n+k}\setminus
\Lambda
_n$ will now be denoted by $u_{n+1},\ldots,u_{n+k}$.
For any symmetric polynomial $P(x_{n+1},\ldots,x_{n+k})$ in $k$
variables we have
%
\begin{equation}
\label{eq:it-is-all-characters}\qquad \E P(u_{n+1},\ldots,u_{n+k}) = \bigl(
\chi^{\regular}_{S_n} \otimes \chi ^{\trivial}_{S_k}
\bigr) \bigl( P(X_{n+1},\ldots,X_{n+k}) \downarrow
^{S_{n+k}}_{S_n \times S_k} \bigr),
\end{equation}
where $F  \downarrow^{S_{n+k}}_{S_n\times S_k}\in\C(S_n\times S_k)$
denotes the restriction of $F\in\C(S_{n+k})$ to the subgroup
$S_n\times S_k$.
\end{thmm}

Before we do this, we show the following technical result.

\begin{lem}
\label{lem:jucys-murphy-skew}
Let $\lambda\in\partitions{n}$, $\mu\in\partitions{k}$ be given.
Let\/ $\Gamma$ be a random Young diagram associated to the outer
product $V^{\lambda}\oproduct V^{\mu}$ of the corresponding irreducible
representations.
Let $u_{n+1},\ldots,u_{n+k}$ be the $u$-coordinates of the boxes of the
skew Young diagram $\Gamma\setminus\lambda$
(one can show that always $\lambda\subseteq\Gamma$).
Then for any symmetric polynomial $P(x_{n+1},\ldots,x_{n+k})$ in $k$ variables
\[
\E P(u_{n+1},\ldots,u_{n+k}) = \bigl( \chi^{\lambda}
\otimes\chi ^{\mu} \bigr) \bigl( P(X_{n+1},\ldots,X_{n+k})
\downarrow^{S_{n+k}}_{S_n
\times S_k} \bigr).
\]
\end{lem}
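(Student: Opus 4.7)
The plan is to exploit that $Y := P(X_{n+1},\dots,X_{n+k})$, for symmetric $P$, commutes with $H := S_n \times S_k$, so that $Y|_H$ lies in the center of $\C(H)$ and acts on the irreducible $V^\lambda \otimes V^\mu$ by a scalar which, by definition, is the right-hand side of the lemma; the task is then to compute that scalar via the canonical embedding of $V^\lambda \otimes V^\mu$ into the induced module $V^\lambda \oproduct V^\mu$.

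First I would verify $[Y,h]=0$ for all $h \in H$. Commutation with $S_n$ is transparent from the formula $X_i = (1,i)+\cdots+(i-1,i)$: conjugation by $\sigma \in S_n$ permutes the first $n$ summands of $X_i$ among themselves whenever $i > n$, so $Y$ is fixed. For a generator $(i,i+1) \in S_k$ with $n+1 \le i \le n+k-1$, Young's orthogonal form shows that the transposition stabilizes each pair $\{e_t, e_{t^\sigma}\}$ of the seminormal basis of $V^\Gamma$; since $P$ is symmetric and $u_i,u_{i+1}$ are swapped in $t \leftrightarrow t^\sigma$, $Y$ acts as the same scalar on both $e_t$ and $e_{t^\sigma}$, hence commutes with $(i,i+1)$. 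These generators together generate all of $H$. Then by Lemma~\ref{lem:Jucys-Murphy-1}, $Y$ acts diagonally in the Young basis of $V^\Gamma$ with eigenvalue $P(u_{n+1}(t),\dots,u_{n+k}(t))$, which by symmetry of $P$ depends only on the shape $\lambda(t)$ of $t|_{\{1,\dots,n\}}$; denote the common value by $P_{\lambda(t),\Gamma}$. Consequently $Y$ acts as the scalar $P_{\lambda,\Gamma}$ on every $V^\lambda \otimes V^\mu$-isotypic summand of $V^\Gamma \!\downarrow_H$, and the RHS of the lemma is simply the scalar $\alpha$ by which $Y|_H$ acts on the abstract irreducible $V^\lambda \otimes V^\mu$.

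To compute $\alpha$, I would work inside $V^\lambda \oproduct V^\mu$ equipped with its canonical $S_{n+k}$-invariant inner product, its isotypic decomposition $V^\lambda \oproduct V^\mu = \bigoplus_\Gamma c^\Gamma_{\lambda,\mu}\, V^\Gamma$ with orthogonal projectors $\pi_\Gamma$, and the canonical $H$-equivariant embedding $\iota : V^\lambda \otimes V^\mu \hookrightarrow V^\lambda \oproduct V^\mu$, $v \mapsto 1 \otimes v$. Writing $\iota(v) = \sum_\Gamma \pi_\Gamma \iota(v)$, each summand lies in the $V^\lambda \otimes V^\mu$-isotypic component of $V^\Gamma$, so $Y\cdot \pi_\Gamma \iota(v) = P_{\lambda,\Gamma}\,\pi_\Gamma \iota(v)$. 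Projecting $Y\cdot \iota(v)$ back onto $\iota(V^\lambda \otimes V^\mu)$ via the orthogonal projection $\pi_\iota$ and invoking Schur's lemma for the $H$-equivariant composition $\pi_\iota \pi_\Gamma \iota = s_\Gamma \cdot \operatorname{id}$ yields $\alpha = \sum_\Gamma s_\Gamma P_{\lambda,\Gamma}$ with $\sum_\Gamma s_\Gamma = 1$.

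The crux, and the main technical obstacle, is the identification $s_\Gamma = \mathbb{P}_{V^\lambda \oproduct V^\mu}(\Gamma)$. Taking an orthonormal basis $\{e_i\}$ of $V^\lambda \otimes V^\mu$ and the central idempotent formula $\pi_\Gamma = \tfrac{f^\Gamma}{(n+k)!}\sum_g \chi^\Gamma(g)\,g$, one computes
\[
s_\Gamma \cdot f^\lambda f^\mu \;=\; \sum_i \|\pi_\Gamma \iota(e_i)\|^2 \;=\; \frac{f^\Gamma}{(n+k)!} \sum_{h \in H}\chi^\Gamma(h)\,\chi^{V^\lambda \otimes V^\mu}(h),
\]
where distinct cosets of $H$ in the basis $\{g \otimes e_i\}$ are mutually orthogonal, so only $g \in H$ contribute. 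By Frobenius reciprocity the inner sum equals $n!\,k!\,c^\Gamma_{\lambda,\mu}$, giving $s_\Gamma = c^\Gamma_{\lambda,\mu} f^\Gamma/\dim(V^\lambda \oproduct V^\mu) = \mathbb{P}_{V^\lambda \oproduct V^\mu}(\Gamma)$ in view of \eqref{eq:distribution-outer-product}, and hence $\alpha = \sum_\Gamma \mathbb{P}_{V^\lambda \oproduct V^\mu}(\Gamma)\,P_{\lambda,\Gamma} = \E\,P(u_{n+1},\dots,u_{n+k})$, as claimed.
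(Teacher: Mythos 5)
Your proof is correct, and it reaches the same two milestones as the paper's argument --- (i) the element $P(X_{n+1},\dots,X_{n+k})$ commutes with $S_n\times S_k$ and acts by the scalar $P(u_{n+1},\dots,u_{n+k})$ on the pieces indexed by $\Gamma$, and (ii) the resulting weighted average of these scalars has weights equal to $\mathbb{P}_{V^\lambda\oproduct V^\mu}(\Gamma)$ --- but the technical route is genuinely different. The paper works inside the two-sided regular representation $\C(S_{n+k})=\bigoplus_\gamma V^\gamma\otimes V^\gamma$, hits it from the left with the projection $\projection_\lambda\otimes\projection_\mu$, and extracts the identity from a single trace computation, calibrating the overall constant at the end by plugging in $F=\delta_e$; step (i) is obtained there by the purely algebraic trick of writing $p_\ell(X_{n+1},\dots,X_{n+k})$ as the difference of the two elements $\sum_{i\le n+k}X_i^\ell$ and $\sum_{i\le n}X_i^\ell$, which are central in $\C(S_{n+k})$ and $\C(S_n)$ respectively, so that only Lemma~\ref{lem:Jucys-Murphy-1} as literally stated is needed. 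You instead work inside the induced module with its invariant inner product, identify the right-hand side as $\pi_\iota\,\rho(Y)\,\iota$ via the orthogonality of distinct cosets, and compute the weights $s_\Gamma$ by Frobenius reciprocity. Your version is more concrete and makes the probabilistic meaning of the weights transparent (they are literally the squared norms of the isotypic components of the embedded copy of $V^\lambda\otimes V^\mu$), at the cost of having to set up the coset basis and the projectors $\pi_\Gamma$, $\pi_\iota$ explicitly; the paper's version is shorter once the two-sided decomposition is in place. Both are complete arguments.

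One citation point to fix: in step (i) you invoke Lemma~\ref{lem:Jucys-Murphy-1} for the statement that each individual $X_i$ acts diagonally in the Young (Gelfand--Tsetlin) basis of $V^\Gamma$ with eigenvalue the content of the box containing $i$. That is the Jucys--Murphy/Okounkov--Vershik theorem --- standard, and available in the monograph the paper cites --- but it is strictly stronger than Lemma~\ref{lem:Jucys-Murphy-1}, which only concerns symmetric polynomials in \emph{all} of $X_1,\dots,X_n$. If you prefer to stay within the paper's stated toolbox, replace the seminormal-basis argument by the power-sum difference trick above, which gives the needed scalar action of $P(X_{n+1},\dots,X_{n+k})$ on the $V^\lambda$-isotypic component of $V^\Gamma\big\downarrow_{S_n}$ directly; commutation with the adjacent transpositions generating $S_k$ also follows algebraically from the relation $sX_is=X_{i+1}-s$, without appeal to Young's orthogonal form.
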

\begin{pf}
This proof is modeled after the proof of Proposition~3.3 in \citet{Biane1998}.
The regular representation of the symmetric group decomposes as follows:
%
\begin{equation}
\label{eq:left-regular} \C( S_{n+k} ) = \bigoplus_{\gamma\in\partitions{n+k}}
V^\gamma \otimes V^\gamma
\end{equation}
as an $S_{n+k} \times S_{n+k}$-module.
The image of the projection $\projection_{\lambda} \otimes
\projection
_{\mu}\in\C(S_n\times S_k)$ acting from the left on the decomposition
\eqref{eq:left-regular} is equal to
%
\begin{eqnarray}
\label{eq:left-regular-2} (\projection_{\lambda} \otimes\projection_{\mu})
\C( S_{n+k} ) &=&\bigoplus_{\gamma\in\Young{n+k}}
c_{\lambda,\mu}^{\gamma} \bigl( V^{\lambda} \otimes V^{\mu}
\bigr) \otimes V^\gamma
\nonumber
\\[-8pt]
\\[-8pt]
\nonumber
& =&  \bigl( V^{\lambda} \otimes
V^{\mu} \bigr) \otimes\bigoplus_{\gamma\in\Young{n+k}}
c_{\lambda,\mu}^{\gamma} V^\gamma,
\end{eqnarray}
which we view as a $(S_n\times S_k) \times S_{n+k}$-module and
where the multiplicity\vspace*{1pt} $c_{\lambda,\mu}^{\gamma}\in\N\cup\{0\}$
is the
Littlewood--Richardson coefficient.
It follows that if we view \eqref{eq:left-regular-2} as a (right)
$S_{n+k}$-module, the distribution of a random Young diagram associated
to it coincides with the distribution of a random Young diagram $\Gamma
$ associated to the outer product $V^{\lambda} \oproduct V^{\mu}$.

Assume that $F\in\C(S_{n+k})$ commutes with the projection
$\projection
_{\lambda}\otimes\projection_{\mu}$ and furthermore that $F$ acts from
the left on \eqref{eq:left-regular-2} as follows: on the summand
corresponding to $\gamma\in\partitions{n+k}$ it acts by multiplication
by some scalar which we will denote by $F(\gamma)$. From the above
discussion, it follows that if $\Gamma$ is a random Young diagram
associated to the outer product $V^{\lambda} \oproduct V^{\mu}$ then
\[
\E F(\Gamma) = \frac{\Tr F }{\mathrm{(dimension\ of\ the\ image\ of}\
\projection_{\lambda} \otimes\projection_{\mu})},
\]
where for the meaning of the trace $\Tr F$ we view $F$ as acting from
the left on \eqref{eq:left-regular-2}. The numerator is equal to the
trace of $(\projection_{\lambda} \otimes\projection_{\mu}) F \in
\C
(S_{n+k})$ which we view this time as acting from the left on the
regular representation, thus it is equal to
\[
(n+k)! \bigl[(\projection_{\lambda} \otimes\projection_{\mu}) F
\bigr](e)= \frac{(n+k)!  (f^\lambda )^2  (f^\mu
)^2}{n!^2 k!^2} \bigl[ \bigl(\chi^{\lambda} \otimes
\chi^{\mu} \bigr) F \bigr](e).
\]
The last factor on the right-hand side can be written as
\begin{eqnarray*}
\bigl[ \bigl(\chi^{\lambda} \otimes\chi^{\mu} \bigr) F \bigr](e)& =&
\sum_{g\in
S_{n}\times S_{k}} \bigl(\chi^{\lambda}\otimes
\chi^{\mu} \bigr) \bigl(g^{-1} \bigr) F(g) \\
&=&
\sum_{g\in S_{n}\times S_{k}} \bigl(\chi^{\lambda}\otimes
\chi^{\mu} \bigr) (g) F(g) = \bigl(\chi^{\lambda}\otimes
\chi^{\mu} \bigr) \bigl( F \downarrow ^{S_{n+k}}_{S_n\times S_k}
\bigr),
\end{eqnarray*}
where we used the fact that the characters of the symmetric groups
satisfy $\chi^\gamma(g)=\chi^\gamma(g^{-1})$.
Thus,
\[
\E F(\Gamma) = C_{\lambda,\mu} \bigl(\chi^{\lambda} \otimes
\chi^{\mu} \bigr) \bigl( F \downarrow^{S_{n+k}}_{S_n\times S_k}
\bigr)
\]
for some constant $C_{\lambda,\mu}$ which depends only on $\lambda$ and
$\mu$. In order to calculate the exact value of this constant, we can
take $F=\delta_e\in\C(S_{n+k})$ to be the unit of the symmetric group
algebra $\C(S_{n+k})$ which therefore corresponds to a function
$F\dvtx \partitions{n}\rightarrow\C$ which is identically equal to $1$. It
follows that $C_{\lambda,\mu}=1$, and thus
%
\begin{equation}
\label{eq:gula2} \E F(\Gamma) = \bigl(\chi^{\lambda} \otimes
\chi^{\mu} \bigr) \bigl( F \downarrow^{S_{n+k}}_{S_n\times S_k}
\bigr).
\end{equation}

We denote by $p_\ell$ the power-sum symmetric polynomial
\[
p_\ell(x_{n+1},\ldots,x_{n+k}) = \sum
_{1\leq i \leq k} x_{n+i}^\ell.
\]
Let $u_{1},\ldots,u_{n}$ be the $u$-coordinates of the boxes of the
Young diagram $\lambda$.
For a given Young diagram $\gamma\in\partitions{n+k}$ such that
$\lambda
\subseteq\gamma$ we denote by $u_{n+1},\ldots,u_{n+k}$ the
$u$-coordinates of the boxes of $\gamma\setminus\lambda$; in this way
$u_1,\ldots,u_{n+k}$ are the $u$-coordinates of the boxes of $\gamma$.
Lemma~\ref{lem:Jucys-Murphy-1} shows that the operator
%
\begin{equation}
\label{eq:bubu} \sum_{1\leq i \leq n+k} X_i^\ell
\in\C(S_{n+k})
\end{equation}
acts from the right on \eqref{eq:left-regular-2} as follows: on the
summand corresponding to $\gamma$ it acts by multiplication by the
scalar $\sum_{1\leq i \leq n+k} u_i^\ell$. Furthermore, it does not
matter if we act from the left or from the right because \eqref
{eq:bubu} belongs to the center of $\C(S_{n+k})$, and thus it commutes
with the projection $\projection_{\lambda}\otimes\projection_{\mu}$.

Lemma~\ref{lem:Jucys-Murphy-1} shows that the operator
%
\begin{equation}
\label{eq:bubu2} \sum_{1\leq i \leq n} X_i^\ell
\in\C(S_n)
\end{equation}
belongs to the center of the symmetric group algebra $\C(S_n)$
therefore it commutes with the projector $\projection_{\lambda
}\otimes
\projection_{\mu} \in\C(S_n) \otimes\C(S_k) \subseteq\C(S_{n+k})$.
Furthermore, Lemma~\ref{lem:Jucys-Murphy-1} shows that \eqref{eq:bubu2}
acts from the left on \eqref{eq:left-regular-2} as follows: on any
summand it acts by multiplication by the scalar $\sum_{1\leq i \leq n}
u_i^\ell$. It follows that the difference of \eqref{eq:bubu} and~\eqref
{eq:bubu2}
\[
\sum_{1\leq i \leq n+k} X_i^\ell- \sum
_{1\leq i \leq n} X_i^\ell= \sum
_{1\leq i \leq k} X_{n+i}^\ell =p_\ell(X_{n+1},
\ldots,X_{n+k})
\]
commutes with $\projection_{\lambda}\otimes\projection_{\mu}$ and acts
on \eqref{eq:left-regular-2} from the left as follows: on the summand
corresponding to $\gamma$ it acts by multiplication by
\[
\sum_{1\leq i \leq n+k} u_{i}^\ell- \sum
_{1\leq i \leq n} u_{i}^\ell= \sum
_{1\leq i \leq k} u_{n+i}^\ell=p_\ell(u_{n+1},
\ldots,u_{n+k}).
\]

Since power-sum symmetric functions generate the algebra of symmetric
polynomials, we proved in this way that
$P(X_{n+1}, \ldots, X_{n+k})$ commutes with $\projection_{\lambda
}\otimes\projection_{\mu}$ and acts on \eqref{eq:left-regular-2} from
the left as follows: on the summand corresponding to $\gamma$ it acts
by multiplication by $P(u_{n+1}, \ldots, u_{n+k})$. This shows that
\eqref{eq:gula2} can be applied to $F=P(X_{n+1}, \ldots, X_{n+k})$ which
completes the proof.
\end{pf}

\begin{pf*}{Proof of Theorem~\ref{thmm:how-to-calculate-moments}}
The construction of Pieri growth given in Section~\ref{subsec:pieri-growth} can be formulated equivalently as follows. First,
choose a random Young diagram $\Lambda_n$ according to the Plancherel
measure of order $n$; in other words $\Lambda_n$ is a random Young
diagram with the distribution corresponding to the left regular
representation. Then, conditioned on the event $\Lambda_n=\lambda\in
\partitions{n}$, we take $(A_1,\ldots,A_n)$ to be a vector of
i.i.d. $U(0,1)$ random variables conditioned to have $\lambda$ as its
associated $\RSK$ shape; and then similarly take $(B_1,\ldots,B_k)$ to
be a vector of i.i.d. $U(0,1)$ random variables conditions to have the
single-row diagram $(k)$ as its associated $\RSK$ shape.

For $F\in\C(S_n\times S_k)$, we define
$ ( \Id\otimes\chi^{\trivial}_{S_k}  ) F \in\C(S_n)$
by a
partial application of the character $\chi^{\trivial}_{S_k}$ to the
second factor as follows:
\[
\bigl[ \bigl( \Id\otimes\chi^{\trivial}_{S_k} \bigr) F \bigr] (g)
=\sum_{h\in S_k} \chi^{\trivial}_{S_k}(h)
F(g,h) \qquad\mbox{for } g\in S_n,
\]
where we view $(g,h)\in S_n\times S_k$.

Theorem~\ref{thmm:plactic-pieri-rule} shows that if we condition over
the event $\Lambda_n=\lambda$ then the distribution of the $\RSK$ shape
associated to the concatenated sequence $(A_1,\ldots,A_n,B_1, \ldots,B_k)$
coincides with the distribution of the random Young diagram associated
to the representation $V^\lambda\oproduct V^{\trivial}_{S_k}$.
Lemma~\ref{lem:jucys-murphy-skew} shows that the conditional expected
value is given by
%
\begin{eqnarray}
\label{eq:friday}&& \E \bigl( P(u_{n+1},\ldots,u_{n+k}) |
\Lambda_n=\lambda \bigr) \nonumber\\
&&\qquad=
\bigl( \chi^{\lambda} \otimes\chi^{\trivial}_{S_k} \bigr)
\bigl( P(X_{n+1},\ldots,X_{n+k}) \downarrow^{S_{n+k}}_{S_n \times S_k}
\bigr) \\
&&\qquad=
\chi^\lambda \bigl( \bigl( \Id\otimes\chi^{\trivial}_{S_k}
\bigr) \bigl( P(X_{n+1},\ldots,X_{n+k}) \downarrow^{S_{n+k}}_{S_n \times S_k}
\bigr) \bigr).\nonumber
\end{eqnarray}
If we view it as a function of $\lambda\in\partitions{n}$, then
\eqref
{eq:inverse-isopmorphism} shows that it corresponds to the central element
%
\begin{equation}
\label{eq:auxiliary-character} \bigl( \Id\otimes\chi^{\trivial}_{S_k} \bigr)
\bigl( P(X_{n+1},\ldots,X_{n+k}) \downarrow^{S_{n+k}}_{S_n \times S_k}
\bigr)\in\C(S_n).
\end{equation}

Let us take the mean value of both sides of \eqref{eq:friday}. The mean
value of the left-hand side is equal to the left-hand side of \eqref
{eq:it-is-all-characters}. The mean of the right-hand side, by~\eqref
{eq:auxiliary-character} and \eqref{eq:isomorphism-mean-value}, is
equal to the right-hand side of \eqref{eq:it-is-all-characters}. In
this way, we showed that equality \eqref{eq:it-is-all-characters}
holds true.
\end{pf*}

\subsection{Moments of Jucys--Murphy elements}

For $\alpha\in\N$, we define the appropriate moment of the random
measure $m_{n,k}$:
\[
M_\alpha= M_\alpha(n,k) = \int_\R
z^\alpha \,dm_{n,k} = \frac
{1}{k} n^{-{\alpha}/{2}} \sum
_{\ell=1}^k u_\ell^\alpha.
\]
Notice that $M_\alpha$ is a random variable. In this section, we will
find the asymptotics of its first two moments: we will not only
calculate the limits but also find the speed at which these limits are
obtained since the latter is also necessary for the calculation of the
variance $\Var M_\alpha$.

Denote by
\[
\gamma_\alpha= \int z^\alpha \,d\SClaw= %
\cases{
C_{{\alpha}/{2}}, & \quad$\mbox{if $\alpha$ is even}$, \vspace*{2pt}
\cr
0, & \quad $\mbox{if
$\alpha$ is odd}$,} %
\]
the sequence of moments of the semicircle distribution, where $C_m =
\frac{1}{m+1}{2m\choose m}$ denotes the $m$th Catalan number. We will
prove the following.

\begin{thmm}
\label{thmm-malpha-moments}
For each $\alpha\in\mathbb{N,}$ we have
%
\begin{eqnarray}
\E M_\alpha&=& \gamma_\alpha+ O \biggl(\frac{k}{\sqrt{n}}
\biggr), \label
{eq:malpha-mean}
\\
\Var M_\alpha&=& O \biggl( \frac{1}{k} + \frac{k}{\sqrt{n}}
\biggr). \label{eq:malpha-variance}
\end{eqnarray}
\end{thmm}

This kind of calculation is not entirely new; similar calculations
already appeared in several papers [\citeauthor{Biane1995} (\citeyear
{Biane1995,Biane1998,Biane2001}, \citeauthor{Sniady2006}
(\citeyear{Sniady2006,Sniady2006a})] in the special
case $k=1$. Our calculation is not very far from the ones mentioned
above; in fact, in some aspects it is simpler than some of them since
we study a particularly simple character of the symmetric group $S_n$,
namely $\chi^{\regular}_{S_n}$ corresponding to the regular representation.

\subsubsection{The mean value of \texorpdfstring{$M_\alpha$}{Malpha}}
\label{subsec:mean-value}
Theorem~\ref{thmm:how-to-calculate-moments} shows that
%
\begin{equation}
\label{eq:moment-jm} \E M_\alpha= \frac{1}{k} n^{-{\alpha}/{2}} \bigl(
\chi^{\regular}_{S_n} \otimes\chi^{\trivial}_{S_k}
\bigr) \biggl( \sum_{1\leq i\leq k} X_{n+i}^\alpha
\downarrow^{S_{n+k}}_{S_n
\times S_k} \biggr).
\end{equation}
The problem is therefore reduced to studying the element
%
\begin{equation}
\label{eq:sum-jm} \sum_{1\leq i\leq k} X_{n+i}^\alpha=
\sum_{1\leq i\leq k} \sum_{1\leq j_1,\ldots,j_\alpha\leq n+i-1}
(n+i, j_1) \cdots(n+i, j_\alpha)\in\C(S_{n+k}).
\end{equation}

We say that $\Xi=\{ \Xi_1,\ldots, \Xi_\ell\}$ is a \demph
{set-partition} of some set $Z$ if $\Xi_1,\ldots,\Xi_\ell$ are disjoint,
nonempty subsets of $Z$ such that $\Xi_1\cup\cdots\cup\Xi_l = Z$. We
denote by $\vert\Xi \vert$ the number of parts of $\Xi$, which is
equal to
$\ell$. There is an obvious bijection between set partitions of $Z$ and
equivalence relations on $Z$.

For a given summand contributing to the right-hand side of \eqref
{eq:sum-jm}, we define the sets
\begin{eqnarray*}
Z_\Sigma& =& \bigl\{\ell\in\{1,\ldots,\alpha\}\dvtx j_\ell\leq
n \bigr\},
\\
Z_\Pi& =& \bigl\{\ell\in\{1,\ldots,\alpha\}\dvtx j_\ell\geq
n+1 \bigr\}.
\end{eqnarray*}
We also define a set-partition $\Sigma$ of the set $Z_\Sigma$ which
corresponds to the equivalence relation
\[
p\sim q \quad\iff\quad j_p=j_q\qquad \mbox{for }p,q\in
Z_\Sigma.
\]
In an analogous way, we define a set-partition $\Pi$ of the set $Z_\Pi$.

It is easy to see that if $1\leq i \leq k$, and $j_1,\ldots,j_\alpha
\leq n+i-1$, and $1\leq i'\leq k$, and $j'_1,\ldots,j'_\alpha\leq
n+i'-1$ are such that the corresponding set-partitions coincide:
$\Sigma
=\Sigma'$ and $\Pi=\Pi'$ then there exists a permutation $g\in S_n
\times S_k$ with the property that $g(n+i)=n+i'$, $g(j_\ell)=j'_{\ell
}$. It follows that the corresponding summands
\[
(n+i, j_1) \cdots(n+i, j_\alpha) \quad\mbox{and}\quad
\bigl(n+i', j'_1 \bigr) \cdots
\bigl(n+i', j'_\alpha \bigr)
\]
are conjugate by a permutation $g\in S_n \times S_k$. This implies that
the corresponding characters
\[
\bigl( \chi^{\regular}_{S_n} \otimes\chi^{\trivial}_{S_k}
\bigr) \bigl( (n+i, j_1) \cdots(n+i, j_\alpha)
\downarrow^{S_{n+k}}_{S_n
\times S_k} \bigr)
\]
are equal. This shows that we can group together summands of \eqref
{eq:sum-jm} according to the corresponding partitions $\Sigma$ and
$\Pi$.

The contribution to \eqref{eq:moment-jm} of any summand corresponding
to given set-partitions $\Pi$ and $\Sigma$ is equal to zero if $(n+i,
j_1) \cdots(n+i, j_\alpha)$ restricted to $S_n$ is not equal to the
identity for any representative $i,j_1,\ldots,j_\alpha$. Otherwise, the
total contribution of all such summands is equal to
%
\begin{equation}
\label{eq:contribution} \frac{1}{k} n^{-{\alpha}/{2}} (n)_{\vert\Sigma \vert} \biggl(
\sum_{1\leq i \leq k} (i-1)_{\vert\Pi \vert} \biggr) = O \biggl(
n^{{(2 \vert\Sigma \vert+\vert\Pi \vert-\alpha)
}/{2}} \biggl(\frac
{k}{\sqrt{n}} \biggr)^{\vert\Pi \vert} \biggr),
\end{equation}
where
\[
(m)_\ell=\underbrace{m (m-1)\cdots(m-\ell+1)}_{\ell\ \mathrm{factors}}
\]
denotes the falling factorial.

Assume that the partition $\Sigma$ has a singleton $\{\ell\}$. Then it
is easy to check that the element $j_\ell\in\{1,\ldots,n\}$ is not a
fixed point of the product $(n+i, j_1) \cdots(n+i, j_\alpha)$, hence
the contribution of such partitions $\Sigma$ is equal to zero. This
means that we can assume that every block of $\Sigma$ has at least two
elements. It follows that $2 \vert\Sigma \vert +\vert\Pi \vert
\leq
|Z_\Sigma|+
|Z_\Pi| = \alpha$. On the other hand, from the assumptions it follows
that $\frac{k}{\sqrt{n}} = o(1)$. There are the following three (not
disjoint) cases.
\begin{itemize}
\item Suppose that $2 \vert\Sigma \vert +\vert\Pi \vert \leq
\alpha$ and
$\vert\Pi \vert\geq1$. Then \eqref{eq:contribution} is equal to
\[
O \biggl( \frac{k}{\sqrt{n}} \biggr).
\]
\item Suppose that $2 \vert\Sigma \vert +\vert\Pi \vert \leq
\alpha-1$ and
$\vert\Pi \vert\geq0$. Then \eqref{eq:contribution} is equal to
\[
O \biggl( \frac{1}{n} \biggr).
\]
\item Suppose that $2 \vert\Sigma \vert +\vert\Pi \vert = \alpha
$ and $\vert\Pi \vert=0$; in other words, all blocks of $\Sigma$
have exactly two elements
and the partition $\Pi$ is empty. Then the left-hand side of \eqref
{eq:contribution} is equal to
\[
1 + O \biggl( \frac{1}{n} \biggr);
\]
this is the only case when the limit of \eqref{eq:contribution} is nonzero.
\end{itemize}
The above discussion shows that
\[
\E M_\alpha= \operatorname{Const}_\alpha+ O \biggl(
\frac{1}{n}+\frac
{k}{\sqrt{n}} \biggr)=\operatorname{Const}_\alpha+
O \biggl(\frac
{k}{\sqrt {n}} \biggr),
\]
where $\operatorname{Const}_\alpha$ is some constant which depends only
on $\alpha$.
In this way, we showed that the limit $\lim\E M_\alpha=\operatorname
{Const}_\alpha$ of \eqref{eq:moment-jm} is the same as in the simpler
case $k=1$, related to a single Jucys--Murphy element. This case was
computed explicitly by \citet{Biane1995}, who showed that
\[
\lim_{n\to\infty} n^{-{\alpha}/{2}} \chi^{\regular}_{S_{n+1}}
\bigl(X_{n+1}^\alpha \bigr) = \gamma_\alpha,
\]
which implies that $\operatorname{Const}_\alpha=\gamma_\alpha$ and
proves \eqref{eq:malpha-mean}.

\subsubsection{The second moment of \texorpdfstring{$M_\alpha$}{Malpha}}
We now calculate the second moment of the random variable $M_\alpha$.
We have that
%
\begin{equation}\qquad
\label{eq:second-moment} \E M_\alpha^2 = \frac{1}{k^2}
n^{-\alpha} \bigl( \chi^{\regular}_{S_n} \otimes
\chi^{\trivial}_{S_k} \bigr)
\biggl( \biggl( \sum_{1\leq i_1\leq k}
X_{n+i_1}^\alpha \cdot \sum_{1\leq i_2\leq k}
X_{n+i_2}^\alpha \biggr) \downarrow^{S_{n+k}}_{S_n \times S_k}
\biggr),
\end{equation}
so, similarly as in Section~\ref{subsec:mean-value}, the problem is
reduced to studying the element
\begin{eqnarray*}
&&\biggl( \sum_{1\leq i\leq k} X_{n+i}^\alpha
\biggr)^2 \\
&&\qquad=
\sum_{1\leq i_1,i_2\leq k} \sum_{1\leq j_1,\ldots,j_\alpha\leq n+i_1-1}
\sum_{1\leq j_{\alpha+1},\ldots,j_{2\alpha} \leq n+i_2-1}
(n+i_1, j_1) \cdots\\
&&\hspace*{230pt}{}\times (n+i_1,
j_{\alpha}) \\
&&\hspace*{230pt}{}\times(n+i_2, j_{\alpha+1}) \cdots\\
&&\hspace*{230pt}{}\times
(n+i_2, j_{2\alpha})\in\C(S_{n+k}).
\end{eqnarray*}
In an analogous way, we define sets $Z_\Sigma, Z_\Pi\subseteq\{
1,\ldots
,2\alpha\}$ and the corresponding partitions $\Sigma$ and $\Pi$. In
this case, however, the analysis is more difficult, which comes from
the fact that it is possible that $j_\ell=n+i_q$ for some values of
$\ell$ and~$q$. If this happens, then we say that the block of $\Pi$
which contains $\ell$ is \emph{special}. We can again group summands
according to the corresponding set-partitions $\Sigma$, $\Pi$ (and the
information about which of the blocks of $\Pi$ is special, if any).
The detailed analysis follows. Just as before, one can assume that
every block of $\Sigma$ contains at least two elements.
\begin{longlist}[\quad]
\item[\textit{Case} 1:]
$i_1=i_2$. The total contribution of the summands of this form is just
equal to $\frac{1}{k} \E M_{2\alpha}$. We already calculated the
asymptotic behavior of such expressions; it is equal to $\frac{1}{k}
\gamma_{2\alpha} + O ( \frac{1}{\sqrt{n}}  ) $.

\item[\textit{Case} 2:] $i_1 < i_2$. Here, we divide into
two subcases.

\begin{enumerate}[\quad]
\item[\textit{Case} 2A:] there exists a special block, that is, $j_\ell=n+i_1$ for some
index $\ell$. If the contribution is nonzero, then it is nonnegative
and bounded from above by
\begin{eqnarray*}
&&\frac{1}{k^2} n^{-\alpha} (n)_{\vert\Sigma \vert} \biggl( \sum
_{1\leq i_1
< i_2 \leq k} (i_2-1)_{\vert\Pi \vert-1} \biggr)
\\
&&\qquad=O \biggl( \frac{1}{k} n^{{(2 \vert\Sigma \vert+\vert\Pi
\vert-2\alpha)}/{2}} \biggl(\frac{k}{\sqrt{n}}
\biggr)^{\vert\Pi \vert} \biggr)\\
&&\qquad = O \biggl( \frac{1}{k} \biggr).
\end{eqnarray*}

\item[\textit{Case} 2B:] there is no special block, that is, $j_1,\ldots,j_{2\alpha}$ are
all different from $i_1$ and~$i_2$. In this case, we divide into two
further subcases.

\begin{enumerate}[\qquad]
\item[\textit{Case} 2B(i):] $\Pi$ is not empty. If the contribution is nonzero, then it is
nonnegative and bounded from above by
\begin{eqnarray*}
&&\frac{1}{k^2} n^{-\alpha} (n)_{\vert\Sigma \vert} \biggl( \sum
_{1\leq i_1
< i_2 \leq k} (i_2-1)_{\vert\Pi \vert} \biggr)
\\
&&\qquad= O \biggl( n^{{(2 \vert\Sigma \vert+\vert\Pi \vert-2\alpha)
}/{2}} \biggl(\frac
{k}{\sqrt{n}} \biggr)^{\vert\Pi \vert}
\biggr) \\
&&\qquad= O \biggl( \frac{k}{\sqrt{n}} \biggr).
\end{eqnarray*}

\item[\textit{Case} 2B(ii):] $\Pi$ is empty. In this case, the contribution of all such
summands to~\eqref{eq:second-moment} does not depend on $i_1$ and $i_2$
and can be written as
%
\begin{equation}
\label{eq:contribution-jm2} \frac{{k\choose2}}{k^2} n^{-\alpha} \chi^{\regular}_{S_{n}}
\bigl[ \bigl( X_{n+1}^\alpha \downarrow^{S_{n+1}}_{S_n}
\bigr)^2 \bigr].
\end{equation}
From the proof of equation (5.1.2) in \citet{Biane1998}, it follows that
\begin{eqnarray*}
&&\lim_{n\to\infty} \chi^{\regular}_{S_{n}} \biggl[
\biggl(\frac
{1}{n^{\alpha/2}} X_{n+1}^\alpha
\downarrow^{S_{n+1}}_{S_n} \biggr)^2 \biggr] \\
&&\qquad= \lim
_{n\to\infty} \biggl[ \chi^{\regular}_{S_{n}} \biggl(
\frac{1}{n^{\alpha/2}} X_{n+1}^\alpha \downarrow^{S_{n+1}}_{S_n}
\biggr)^2 \biggr] = ( \gamma_\alpha )^2,
\end{eqnarray*}
and, therefore,
\[
\chi^{\regular}_{S_{n}} \biggl[ \biggl( \frac{1}{n^{\alpha/2}}
X_{n+1}^\alpha \downarrow^{S_{n+1}}_{S_n}
\biggr)^2 \biggr]= ( \gamma_\alpha )^2 + O \biggl(
\frac{1}{n} \biggr).
\]
It follows that \eqref{eq:contribution-jm2} is equal to
\[
\frac{1}{2} ( \gamma_\alpha )^2 + O \biggl(
\frac
{1}{k}+\frac
{1}{n} \biggr).
\]
\end{enumerate}
\end{enumerate}

\item[\textit{Case} 3:]
$i_1 > i_2$. This case is analogous to Case 2 above.
\end{longlist}

To summarize, we have shown that
\[
\E M_\alpha^2 = (\gamma_\alpha )^2 + O
\biggl( \frac
{1}{k} + \frac{k}{\sqrt{n}} \biggr).
\]
Since $ \Var M_\alpha= \E M_\alpha^2 - (\E M_\alpha)^2$, combining
this with \eqref{eq:malpha-mean} we get \eqref{eq:malpha-variance},
which finishes the proof of Theorem~\ref{thmm-malpha-moments}.

\subsection{Proof of Theorem \protect\ref{thmm:growth-deterministic}}

By Theorem~\ref{thmm-malpha-moments}, we get using Chebyshev's
inequality that for any $\varepsilon>0$ and any $\alpha\in\N$,
%
\begin{equation}
\label{eq:moments-converge} \prob \bigl( \llvert M_\alpha- \gamma_\alpha
\rrvert > \varepsilon \bigr) = O \biggl( \frac{1}{k}+\frac{k}{\sqrt{n}}
\biggr).
\end{equation}
Furthermore, for each $\varepsilon>0$ and $u\in\R$ there exists a
$\delta
>0$ and an integer $A>0$ with the property that if $m$ is a probability
measure on $\mathbb{R}$ such that its moments (up to order $A$) are
$\delta$-close to the moments of $\SClaw$ then $\llvert  F_m(u)-\FSC
(u)\rrvert <\varepsilon$. If this were not the case, then there would exist
a sequence of measures which converges in moments to $\SClaw$ but does
not converge weakly to $\SClaw$, which is not possible, since $\SClaw$
is compactly supported and therefore uniquely determined by its moments
[\citet{Durrett2010}, Section~3.3.5]. So, we get from \eqref
{eq:moments-converge} that for any $u\in\R$,
\begin{eqnarray*}
\prob \bigl( \bigl\llvert F_{m_{n,k}}(u) - \FSC(u) \bigr\rrvert >
\varepsilon \bigr) &\le&\sum_{\alpha=1}^A \prob
\bigl( |M_\alpha-\gamma_\alpha |>\delta \bigr)
\\
&= &O \biggl( \frac{1}{k}+\frac{k}{\sqrt{n}} \biggr),
\end{eqnarray*}
which proves the claim.

\section{The asymptotic determinism of $\operatorname{RSK}$ and jeu de taquin}
\label{sec-determinism}

\subsection{The asymptotic determinism of $\operatorname{RSK}$}
\label{sec-asym-det-rsk}
A key fact which we will need in our proof of Theorems \ref
{thmm-straight-line}, \ref{thmm-isomorphism} and \ref
{thmm-inverse-rsk}, and which is also of interest by itself, is the
following: when applying an $\RSK$ insertion step with a fixed input $z
\in[0,1]$ to an existing insertion tableau $P_n$ which is the result
of $n$ previous insertion steps involving random inputs which are drawn
independently from the uniform distribution $U(0,1)$, the macroscopic
position of the new box that is added to the $\RSK$ shape depends
asymptotically only on the number $z$ being inserted. We refer to this
phenomenon as the \emph{asymptotic determinism of $\RSK$ insertion}.
Its precise formulation is given in the following theorem, whose proof
will be our first goal in this section.

\begin{thmm}[(Asymptotic determinism of $\RSK$ insertion)]
\label{thmm-asymptotic-det-rsk}
Let
\[
\FSC(t)=F_{\SClaw}(t)=\frac{1}2 + \frac{1}{\pi} \biggl(
\frac{t\sqrt {4-t^2}}{4} + \sin^{-1} \biggl(\frac{t}{2} \biggr) \biggr)
\qquad(-2\le t\le2),
\]
denote as before the cumulative distribution function of the semicircle
distribution $\SClaw$.
Fix $z\in[0,1]$. For each $n\ge1$, let $(\Lambda_n, P_n, Q_n)$ be the
(random) output of the $\RSK$ algorithm applied to a sequence
$X_1,\ldots,X_n$ of i.i.d. random variables with distribution $U(0,1)$,
and let
\[
\Box_n(z) = (i_n, j_n) =
\Rec(X_1,X_2,\ldots,X_n,z)
\]
denote the random position of the new box added to the shape $\Lambda
_n$ upon applying a further insertion step with the number $z$ as the
input. Then we have the convergence in probability
\[
n^{-1/2} (i_n-j_n,i_n+j_n
) \mathop{\rightarrow }^{\mathrm{P}} \bigl(u(z), v(z) \bigr) \qquad\mbox{as }n\to
\infty,
\]
where $u(z)=\FSC^{-1}(z)$
and $v(z)=\Omega_*(u(z))$.
Moreover, for any $\varepsilon>0$,
%
\begin{equation}
\label{eq:det-rsk-estimate} \prob \bigl[ \bigl\Vert n^{-1/2} (i_n-j_n,i_n+j_n
) - \bigl(u(z), v(z) \bigr)\bigr \Vert> \varepsilon \bigr] = O \bigl(n^{-{1}/{4}}
\bigr).
\end{equation}
\end{thmm}

The asymptotic (rescaled) position of the new box as a function of $z$
is illustrated in Figure~\ref{fig:VK-insertion}.

\begin{figure}

\includegraphics{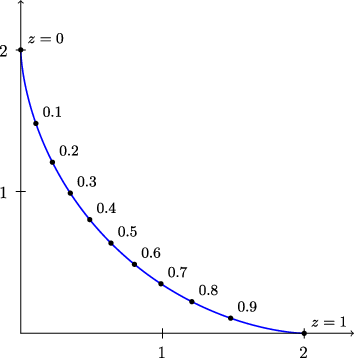}

\caption{The asymptotic position of the new box after RSK insertion as
a function of the new input~$z$.}
\label{fig:VK-insertion}
\end{figure}

\begin{pf*}{Proof of Theorem \ref{thmm-asymptotic-det-rsk}}
We consider first the case $z\in\{0,1\}$: for $z=0$ the box will be
added in the first column and for $z=1$ the box will be added in the
first row, and the question becomes equivalent to the standard problem
of finding the asymptotics\vadjust{\goodbreak} of the length of the first row and the first
column of a Plancherel-distributed random Young diagram, or
equivalently of the length of a longest increasing subsequence in a
random permutation. The large deviations results in the papers of
\citet
{DeuschelZeitouni1999} and \citet{Seppalainen1998} immediately imply
our claim in that case.

Next, fix $z\in(0,1)$ and $\varepsilon>0$.
Denote $u=\FSC^{-1}(z)\in(-2,2)$ and $u'=u+\varepsilon/4=\FSC^{-1}(z) +
\varepsilon/4$.
The cumulative distribution function $\FSC$ is strictly increasing on
$[-2,2]$; it follows that $\FSC(u') > \FSC(u) = z$. Choose some
$\Delta
>0$ in such a way that
$z+\Delta< \FSC( u') \leq1$.

Set $k=k(n) = \lceil n^{1/4}  \rceil$.
Let $\mathbf{X}=(X_1,\ldots,X_n)\in[0,1]^n$ be a sequence of
i.i.d. $U(0,1)$ random variables, and let $\mathbf{Y}=(Y_1,\ldots
,Y_k)\in
[0,1]^k$ be a sequence of i.i.d. $U(0,1)$ random variables conditioned
to be in increasing order. The $\RSK$ shapes $\Lambda_n$ and $\Gamma
_{n+k}$ associated with the sequences $\mathbf{X}$ and $\mathbf{X}
\mathbf{Y}$, respectively, are a Pieri growth pair as defined in
Section~\ref{subsec:pieri-growth}.

Denote $r=\lfloor k (z+\Delta) \rfloor\leq k$. Note that, by
interpreting the random variables $Y_1,\ldots,Y_k$ as the order
statistics of $k$ i.i.d. $U(0,1)$ random variables $Z_1,\ldots,Z_k$, we
have that
\begin{eqnarray*}
\prob ( Y_{r}<z ) &=& \prob \bigl( \mbox{at least $r$ of the
$Z_j$'s are }<z \bigr)
\\
&= &\sum_{j=r}^k \pmatrix{k
\cr
j}
z^j (1-z)^{k-j} = \prob(S_{k,z} \ge r),
\end{eqnarray*}
where $S_{k,z}$ is a random variable with a binomial distribution
$\operatorname{Binom}(k,z)$. By standard large deviations estimates, we
therefore have that for some constant $C>0$,
\[
\prob ( Y_{r}\leq z ) = O \bigl(e^{-C k} \bigr) = O
\bigl(e^{-C n^{1/4}} \bigr) \qquad\mbox{as } n\to\infty.
\]
Let $u_{n+1}\leq\cdots\leq u_{n+k}$ be the $u$-coordinates of the
boxes of $\Gamma_{n+k}\setminus\Lambda_n$ written in the order in
which they were inserted during the application of the $\RSK$ algorithm.
Assume that the event $\{ Y_r>z \}$ occurred; then parts (c)
and (d) of Lemma~\ref{lem:RSK-monotone1} imply that
\[
\Box_n= \Rec(\mathbf{X} z) \preceq\Rec(\mathbf{X} Y_1
\cdots Y_r).
\]
It follows that
\[
\frac{1}{\sqrt{n}}(i_n-j_n)=\frac{1}{\sqrt{n}}(\mbox
{$u$-coordinate of }\Box_n) \leq\frac{1}{\sqrt{n}}u_{n+r}.
\]
Now apply Theorem~\ref{thmm:growth-deterministic} to get that
\begin{eqnarray*}
&&\prob \bigl( \bigl\llvert \FSC \bigl( u' \bigr)- F_{m_{n,k}}
\bigl( u' \bigr) \bigr\rrvert > \FSC \bigl(u'
\bigr)- (z+\Delta) \bigr)
\\
&&\qquad = O \biggl( \frac{1}{k}+\frac{k}{\sqrt {n}} \biggr) = O \bigl(
n^{- 1/4} \bigr),
\end{eqnarray*}
where $m_{n,k}$ is the empirical measure of $u_{n+1}, \ldots, u_{n+k}$.
Outside of this exceptional event, we therefore have that
\[
\frac{r}{k} \leq z+\Delta\leq F_{m_{n,k}} \bigl( u'
\bigr),
\]
which, because of the meaning of the empirical measure, implies that
\[
\frac{1}{\sqrt{n}} u_{n+r} \leq u'=u+\varepsilon/4.
\]
To summarize, the above discussion shows that
%
\begin{equation}
\label{eq:one-sided-bound} \frac{1}{\sqrt{n}}(i_n-j_n) \leq u+
\varepsilon/4
\end{equation}
holds with probability $\ge1- O ( n^{- 1/4}  )$.
In order to obtain an inequality in the other direction we define
$(X'_1,\ldots,X'_n)=(1-X_1,\ldots,1-X_n)$, and let
\[
\Box'_n(z) = \bigl(i'_n,
j'_n \bigr) = \Rec \bigl(X'_1,X'_2,
\ldots,X'_n,1-z \bigr).
\]
Inequality \eqref{eq:one-sided-bound} in this setup shows that
%
\begin{equation}
\label{eq:reverse-one-sided-bound} \frac{1}{\sqrt{n}} \bigl(i'_n-j'_n
\bigr) \leq\FSC^{-1}(1-z) +\varepsilon/4
\end{equation}
holds, except on an event with probability $O ( n^{- 1/4}  )$.
But note that, first, by Lemma~\ref{lem:symmetries-rsk}(b), $(i'_n,j'_n)=(j_n,i_n)$, and second, the semicircle
distribution is symmetric, which implies that $\FSC^{-1}(1-z)=-\FSC
^{-1}(z)$. So, \eqref{eq:reverse-one-sided-bound} translates to
%
\begin{equation}
\label{eq:one-sided-bound2} \frac{1}{\sqrt{n}}(i_n-j_n) \geq u -
\varepsilon/4.
\end{equation}
Combining \eqref{eq:one-sided-bound} and \eqref{eq:one-sided-bound2},
we get that
\[
\label{eq:bound} \prob \biggl[ \biggl\llvert \frac{1}{\sqrt{n}}(i_n-j_n)
- u \biggr\rrvert > \varepsilon/4 \biggr] = O \bigl( n^{- 1/4} \bigr).
\]
Since the function $\Omega_*$ is Lipschitz with constant $1$, by
appealing to Theorem~\ref{thmm-plancherel-limitshape} we also get that
\begin{eqnarray*}
&&\prob \biggl[ \biggl| \frac{1}{\sqrt{n}}(i_n+j_n) -
\Omega_*(u)\biggr | > \varepsilon/2 \biggr]
\\
&&\qquad = O \bigl(e^{-c\sqrt{n}} \bigr) +O \bigl( n^{- 1/4} \bigr) = O \bigl(
n^{- 1/4} \bigr).
\end{eqnarray*}
These last two estimates together immediately imply \eqref
{eq:det-rsk-estimate}.
\end{pf*}

\subsection{Asymptotic determinism of jeu de taquin}
\label{sec-asym-det-jdt}

We now use the relationship between $\RSK$ insertion and \jdt
formulated in Lemma~\ref{lem:factor-map} to deduce from Theorem~\ref
{thmm-asymptotic-det-rsk} an analogous statement that applies to jeu de taquin,
namely the fact that prepending a fixed number $z\in[0,1]$ to $n$
i.i.d. $U(0,1)$ random inputs $X_1,\ldots,X_n$ causes the \jdt path to
exit the $\RSK$ shape at a position that is macroscopically
deterministic in the limit.
We call this property the \emph{asymptotic determinism of jeu de
taquin}, and prove it below. In the next section, we will deduce
Theorems \ref{thmm-straight-line} and \ref{thmm-inverse-rsk} from it.

\begin{thmm}[(Asymptotic determinism of jeu de taquin)]
\label{thmm-asym-det-jdt}
Let $(X_n)_{n=1}^\infty$ be an i.i.d. sequence of random variables with
the $U(0,1)$ distribution.
Fix $z\in[0,1]$. Let $(\jpathlazy_n(z))_{n=1}^\infty$ be the natural
parameterization of the \jdt path associated with the random infinite
Young tableau
\[
\RSK(z, X_1, X_2, \ldots),
\]
and for each $n\ge1$ denote $\jpathlazy_n(z) = (i_n,j_n)$.
Then we have the almost sure convergence
%
\begin{equation}
\label{eq:asym-det-jdt-almost-sure} n^{-1/2} (i_n-j_n,i_n+j_n
) \mathop{\rightarrow }^{{a.s.}} \bigl(-u(z), v(z) \bigr)\qquad \mbox{as }n\to
\infty,
\end{equation}
where $u(z)$ and $v(z)$ are as in Theorem~\ref{thmm-asymptotic-det-rsk}.
\end{thmm}

Note that in the setting of Theorem~\ref{thmm-asym-det-jdt} it is
possible to talk about almost sure convergence, since the random
variables are defined on a single probability space.

\begin{pf*}{Proof of Theorem \ref{thmm-asym-det-jdt}}
For each $n\ge1$, let $(\Pi_{n+1}, P_{n+1}, Q_{n+1})$ denote the
output of the $\RSK$ algorithm applied to the input sequence
$(z,X_1,\ldots,X_n)$, and let $(\Lambda_n, \widetilde{P}_n,
\widetilde
{Q}_n)$ denote the output of $\RSK$ applied to $(X_1,\ldots,X_n)$.
Lemma~\ref{lem:factor-map} shows that
\[
\widetilde{Q}_n = j(Q_{n+1}).
\]
An equivalent way of saying this is that the box $\jpathlazy_n$ is the
difference of the $\RSK$ shapes $\Pi_{n+1}$ and $\Lambda_n$.

On the other hand, let us see what happens when we reverse the sequences:
by Lemma~\ref{lem:symmetries-rsk}(a) the $\RSK$
shape associated to the sequence $(X_n,\ldots,X_1,z)$ is equal to
$(\Pi
_{n+1})^t$ and the $\RSK$ shape associated to $(X_n,\ldots,X_1)$ is
equal to $\Lambda_{n}^t$.
Therefore, the box $\jpathlazy_n^t$ (the reflection of $\jpathlazy_n$
along the principal diagonal) is the box added to the $\RSK$ shape of
$X_n,\ldots,X_1$ upon application of a further $\RSK$ insertion step
with the input $z$. This is exactly the scenario addressed in
Theorem~\ref{thmm-asymptotic-det-rsk} [except that the order of
$X_1,\ldots,X_n$ has been reversed, but that still gives a sequence of
i.i.d. $U(0,1)$ random variables]. Substituting $\jpathlazy_n^t$ for
$\mathbf{d}_n$ in that theorem, we conclude that, for any $\varepsilon>0$,
\[
\prob \bigl[ \bigl\Vert n^{-1/2} (i_n-j_n,i_n+j_n)
- \bigl(-u(z),v(z) \bigr) \bigr\Vert> \varepsilon \bigr] = O \bigl(n^{-1/4}
\bigr).
\]
This implies a weaker version of \eqref{eq:asym-det-jdt-almost-sure}
with convergence \emph{in probability}. To improve this to almost sure
convergence, we will use the Borel--Cantelli lemma, but this requires
passing to a subsequence first to get a convergent series. Setting $n_m
= m^8$, we get that
\[
\sum_{m=1}^\infty n_m^{-1/4}
\le\sum_{m=1}^\infty m^{-2} <
\infty,
\]
so from the Borel--Cantelli lemma we get that for any $\varepsilon>0$,
almost surely
\[
\bigl\Vert n_m^{-1/2} (i_{n_m}-j_{n_m},i_{n_m}+j_{n_m})
- \bigl(-u(z),v(z) \bigr) \bigr\Vert< \varepsilon\vadjust{\goodbreak}
\]
holds for all $m$ large enough. This means that we have the almost sure
convergence in \eqref{eq:asym-det-jdt-almost-sure} along the
subsequence $n=n_m$. Finally, note that $n_{m+1}/\break n_m \to1$ as \mbox{$m\to
\infty$}. It is easy to see that this, together with the fact that the
path $(\jpathlazy_{n})_n$ advances monotonically in both the $x$ and
$y$ directions, guarantees (deterministically) that convergence along
the subsequence implies convergence for the entire sequence.
\end{pf*}

\section{Proof of Theorems \protect\ref{thmm-straight-line},
\protect\ref
{thmm-isomorphism} and \protect\ref{thmm-inverse-rsk}}
\label{sec:proof-main-thms}
\mbox{}
\begin{pf*}{Proof of Theorem~\ref{thmm-straight-line}}
Let $X_1, X_2,
\ldots$ be an i.i.d. sequence of $U(0,1)$ random variables, and let
$(\jpathlazy_n)_{n=1}^\infty$ be the natural parameterization of the
\jdt path of the (Plancherel-distributed) $\RSK$ image of the sequence,
denoting as before $\jpathlazy_n=(i_n,j_n)$. Conditioning on the value
of $X_1$, the situation is exactly that of Theorem~\ref
{thmm-asym-det-jdt}. By Fubini's theorem, the almost sure convergence
in that theorem therefore implies that almost surely (even taking into
account the randomness in~$X_1$),
\[
\lim_{n\to\infty} n^{-1/2}(i_n-j_n,i_n+j_n)
= \bigl(-u(X_1), v(X_1) \bigr).
\]
It follows in particular that the limit
\[
\lim_{n\to\infty} \frac{\jpathlazy_n}{\Vert\jpathlazy_n \Vert} =: ( \cos\Theta, \sin\Theta )
\]
exists almost surely, where $\Theta$ is the random variable defined by
\[
\cot(\pi/4-\Theta) = \frac{v(X_1)}{-u(X_1)}
\]
[as in the proof of Theorem~\ref{thmm-jdt-conv-dist}, the $\pi/4$ comes
from the rotation of the $(u,v)$-coordinate system relative to the
standard one]. Since $(\jpathlazy_n)_n$ is merely a slowed-down version
of the original \jdt path $(\jpath_k)_k$, that is, $\jpathlazy
_n=\jpath
_{K(n)}$ where $K(n)\le n$ for all $n$ and $K(n)\uparrow\infty$ almost
surely as $n\to\infty$,
it follows also that
\[
\frac{\jpath_k}{\Vert\jpath_k \Vert} \mathop{\rightarrow}^{\mathrm{a.s.}} ( \cos\Theta, \sin\Theta
) \qquad\mbox{as }k\to\infty.
\]

It remains to verify that $\Theta$ has the distribution given in
\eqref
{eq:theta-dist}. This follows from Theorem~\ref{thmm-jdt-conv-dist},
which already identifies the correct distributional limit. To argue a
bit more directly, note that by the definition of $u(z)$, the random
variable $u(X_1)$ [and hence also $-u(X_1)$] is distributed according
to the semicircle distribution on $[-2,2]$, that is, it is equal in
distribution to the random variable $U$ from Theorem~\ref
{thmm-semicircle-transition-measure}. Similarly, $v(X_1)=\Omega
_*(-u(X_1))$ is equal in distribution to $V$ from that theorem. So,
$\Theta=\frac{\pi}{4}-\cot^{-1}(-v(X_1)/u(X_1))$ is equal in
distribution to $\frac{\pi}{4}-\cot^{-1}(V/U)$. This is exactly the
random variable whose distribution was shown in the proof of
Theorem~\ref{thmm-jdt-conv-dist} to be given by \eqref{eq:theta-dist}.
\end{pf*}

\begin{pf*}{Proof of Theorems \ref{thmm-isomorphism} and \ref
{thmm-inverse-rsk}}
From the discussion in Section~\ref{sec-jdt-elementary},
we know that there is a measurable subset $A\in\mathcal{B}$ of
$[0,1]^\mathbb{N}$ with $\mathrm{Leb}^{\otimes\mathbb{N}}(A)=1$ and
such that the map $\RSK
\dvtx A\to\Omega$ is defined on $A$, and satisfies the homomorphism
property \eqref{eq:factormap}. To define the inverse homomorphism, let
$B \in\mathcal{F}$ be the set
\[
B = \biggl\{ t \in\Omega \dvtx \lim_{k\to\infty} \frac{\jpath
_k(t)}{\Vert\jpath_k(t) \Vert}
\mbox{ exists } \biggr\}.
\]
This is the event in $\Omega$ on which the random variable $\Theta$
from Theorem~\ref{thmm-straight-line} is defined, and we proved that
$\plancherel(B) = 1$. Since $J$ is a measure-preserving map, the event
\[
C = \bigcap_{n=0}^\infty J^{-n}(B)
= \bigl\{ \Theta_n:= \Theta\circ J^{n-1} \mbox{ exists for
}n=1,2,\ldots \bigr\}
\]
also satisfies $\plancherel(C)=1$. On this event, we define a map
$\Delta\dvtx C\to[0,1]^\mathbb{N}$ by
\[
\Delta(t) = \bigl(F_\Theta \bigl(\Theta_1(t) \bigr),
F_\Theta \bigl(\Theta _2(t) \bigr), F_\Theta \bigl(
\Theta_3(t) \bigr), \ldots \bigr).
\]
Clearly, $\Delta$ is a measurable function since each of its
coordinates is defined in terms of the measurable functions $J$ and
$\Theta$ on $\Omega$. Now, take some sequence $\mathbf
{x}=(x_1,x_2,\ldots) \in A \cap\RSK^{-1}(C)$, and denote $t=\RSK
(\mathbf{x})$. Following the argument in the proof of Theorem~\ref
{thmm-straight-line} above, we see that $\Theta_1=\Theta_1(t)$ is
related to $x_1$ via
\[
- \cot(\pi/4-\Theta) = v(x_1)/u(x_1).
\]
In particular, $\Theta$ is a strictly increasing function of $x_1$, so,
since we also know that the measure $\mathrm{Leb}^{\otimes\mathbb
{N}}$ induces the uniform
distribution $U(0,1)$ on $x_1$ and the distribution \eqref
{eq:theta-dist} on $\Theta$, it follows that this functional relation
can be alternatively described in terms of the cumulative distribution
function of $\Theta$, namely
%
\begin{equation}
x_1 =
F_\Theta(\Theta_1).
\end{equation}
Now apply the same argument to $J(t)$. By the factor property, we get
similarly that $x_2 = F_\Theta(\Theta\circ J(t)) = F_\Theta(\Theta
_2(t))$. Continuing in this way, we get that $x_n = F_\Theta(\Theta
_n(t))$ for all $n\ge1$, in other words that
\[
\Delta(t) = \mathbf{x},
\]
which shows that $\Delta$ is inverse to $\RSK$ on the set $A\cap\RSK
^{-1}(C)$. This completes the proof.
\end{pf*}

\section{Second class particles}
\label{sec-particle-systems}

In this section, we take another look at our results on \jdt on
infinite Young tableaux, this time from the perspective of the theory
of interacting particle systems. As we mentioned briefly in the
\hyperref[sec1]{Introduction}, it turns out that there is a very natural and elegant way
to reinterpret the results on the \jdt path of a random infinite Young
tableau as statements on the behavior of a \demph{second-class
particle} in a certain interacting particle system associated with the
Plancherel measure, which we call the \demph{Plancherel-TASEP} particle
system. This is not only interesting in its own right; it also draws
attention to the remarkable similarity of our results to the parallel
(and, so far, better-developed) theory of second-class particles in the TASEP.

\subsection{Rost's mapping} \label{sec:rost-mapping}
Before introducing the all-important concept of the second-class
particle, let us start by recalling a simpler mapping between growth
sequences of Young diagrams (i.e., paths in the Young graph) and
time-evolutions of a particle system, without the presence of a
second-class particle. To our knowledge, this mapping was first
described in the classical paper of \citet{Rost1981}. Here, the
particles occupy a subset of the sites of a lattice---usually taken
to be $\Z$, but for our purposes it will be more convenient to imagine
the particles as residing in the spaces between the lattice positions,
or equivalently on the sites of the shifted (or dual) lattice $\Z' =
\Z
+\frac{1}2$. The mapping can be described as follows: given a Young
diagram $\lambda\in\allpartitions$, draw the profile $\phi_\lambda$
of $\lambda$ in the Russian coordinate system, then project each
segment of the graph of $\phi_\lambda(u)$ where $u$ ranges over an
interval of the form $[m,m+1]$ down to the $u$-axis. In the particle
universe, a segment of slope $-1$ corresponds to the presence of a
particle at the $\Z'$ lattice site $m+\frac{1}2$, and a segment of slope
$+1$ translates to a vacant site (often referred to as a ``hole''), at
position $m+\frac{1}2$. A site containing a particle is said to be occupied.

It is now easy to see that the allowed transitions of the Young graph
(adding a box to a Young diagram $\lambda$ to get a new diagram $\nu$)
correspond to the following ``exclusion dynamics'' on the particle
system: a particle in position $m+\frac{1}2$ may jump one step to the
right to position $(m+1)+\frac{1}2$, and such a jump is only possible if
site $(m+1)+\frac{1}2$ is currently vacant. This is illustrated in
Figure~\ref{fig-diags-to-particles}. For consistency with the more
general theory of exclusion processes, we refer to these transition
rules as the \demph{TASE \textup{(}Totally Asymmetric Simple Exclusion\textup{)} rules}.

Note also that the empty Young diagram $\varnothing$ corresponds to an
initial state of the particle system wherein the positions $m+\frac{1}2$
are occupied for $m<0$ and vacant for $m\ge0$. Thus, the mapping we
described translates statements about infinite paths on the Young graph
starting from the empty diagram to statements about time-evolutions of
the particle system starting from this initial state. Note that the
mapping is purely combinatorial---we have not imposed any
probabilistic structure yet.

\begin{figure}

\includegraphics{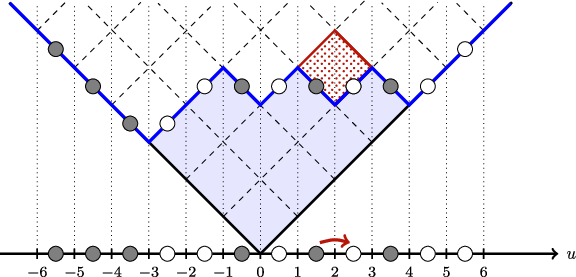}

\caption{A sample configuration of particles on the shifted lattice
$\Z
'$ corresponding via Rost's mapping to the Young diagram $(4,3,2)$.
Particles are depicted by filled circles, empty slots by empty circles.
The addition of the dotted box would correspond to a jump of one of the
particles one site to the right; such a jump can occur whenever the
site to the right of a particle is vacant.}
\label{fig-diags-to-particles}
\end{figure}

\subsection{Enhanced particle systems}

Next, we describe how the structure of the particle system may be
enhanced by the addition of a new kind of particle, referred to as a
second-class particle, whose behavior is different from that of both
ordinary particles and that of holes. Such a particle emerges from an
extension of Rost's mapping defined above, described by \citet
{FerrariPimentel2005}. Consider an infinite path
%
\begin{equation}
\label{eq:inf-path-young} \varnothing= \lambda_0 \nearrow
\lambda_1 \nearrow\lambda_2 \nearrow \ldots
\end{equation}
on the Young graph starting from the empty diagram. From the empty
diagram, the path always moves to the single-box diagram $\lambda_1 =
(1)$. Note that at this point, in the corresponding particle world
there is a single pair consisting of a hole lying directly to the left
of a particle. Following the terminology of Ferrari and Pimentel, we
call this pair the \emph{$*$-pair}, and call the hole on the
left side of the pair the \demph{$*$-hole} and the particle on the
right the \demph{$*$-particle}. In a picture visualizing this system,
we highlight the $*$-pair by drawing a rectangle around it; see
Figure~\ref{fig-domino-initial}. We refer to a particle configuration
with a $*$-pair as an \demph{enhanced particle configuration}, and call
the configuration corresponding to the diagram $\lambda_1$ the \demph
{initial state}.

\begin{figure}[b]

\includegraphics{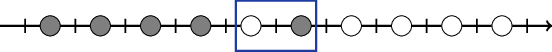}

\caption{The initial state of an enhanced particle system.}
\label{fig-domino-initial}
\end{figure}

Next, introduce dynamics to the enhanced particle system by noting that
when the $*$-particle jumps to the right, it swaps with a hole. Thus,
in such a transition a triplet of adjacent sites in a
``hole--particle--hole'' configuration (of which the leftmost two sites
represent the $*$-pair) becomes a ``hole--hole--particle'' triplet.
Following such a transition, we designate the rightmost two sites of
the triplet as the new $*$-pair. In other words, one can say that the
$*$-pair has jumped one step to the right, trading places with the hole
to its right.

Similarly, another possible transition involving a $*$-pair is when a
``particle--hole--particle'' triplet, of which the two rightmost
positions form a $*$-pair, becomes a ``hole--particle--particle'' triplet
due to the $*$-hole being jumped on by the particle to its left. In
this case, following the transition we designate the leftmost two
particles as the new $*$-pair, and say the $*$-pair jumped one step to
the left. These rules are illustrated in Figure~\ref{fig-star-pair}.

\begin{figure}

\includegraphics{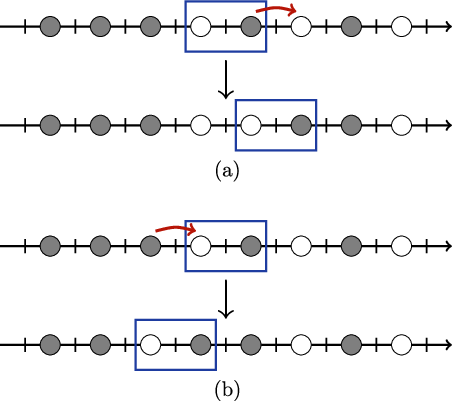}

\caption{Transitions in an enhanced particle system. Transitions not
involving the $*$-pair obey the usual TASE rules. The possible
transitions involving a $*$-pair are: \textup{(a)}
when the rightmost particle in a $*$-pair jumps to the right, the
$*$-pair also moves to the right; \textup{(b)} when
the leftmost hole in a $*$-pair is pushed to the left by a particle
jumping on it, the $*$-pair moves left.}
\label{fig-star-pair}
\end{figure}

\begin{figure}

\includegraphics{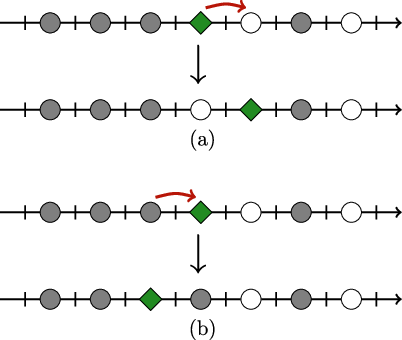}

\caption{A particle system with a second-class particle (represented as
a diamond) and the allowed transitions involving the second-class
particle. Other transitions obey the TASE rules.}
\label{fig-second-class-transitions}
\end{figure}

%
%
%
%
%
%

\subsection{Simplifying the $*$-pair}

We have described the strange-looking rules of evolution of a particle
system enhanced by a so-called $*$-pair. One final simplification step
will make everything much cleaner and more intuitive. Since a $*$-pair
always consists of a particle with a hole to its right, we may as well
consider the pair as occupying a single lattice position, by
contracting the two adjacent positions into one, thus effectively
``shortening'' the lattice by one unit. The result, illustrated in
Figure~\ref{fig-second-class-transitions}, is that now there are three
types of sites: those occupied by an ``ordinary'' particle, holes and a
special site representing the $*$-pair, which is of course the
second-class particle. In this context, we refer to the ordinary
particles as \demph{first-class} particles. Now the transition rules
become much more intuitive: a second-class particle (similarly to a
first-class particle) can swap with a hole to its right but not with a
first-class particle; and it can swap with a first-class particle to
its left (which we think of whimsically as the first-class particle
``pulling rank'' to overtake it, pushing it back to an inferior
position in the infinite line of particles---hence the ``class''
terminology), but not with a hole.

\subsection{The second-class particle and the jeu de taquin path}

Now comes a key observation, which can be understood implicitly from
the discussion in \citet{FerrariPimentel2005} by an astute reader, but
which (so far as we know) is made here explicitly for the first time.
Take an infinite sequence \eqref{eq:inf-path-young} of growing Young
diagrams, and assume that it has a recording tableau $t =
(t_{i,j})_{i,j=1}^\infty$.
Let $(\jpathlazy_n)_{n=1}^\infty$ be the \jdt path of the infinite
Young tableau $t$ given in the natural time parameterization as defined
in Section~\ref{sec-weak-asym}, and denote by $(a_n,b_n) = \jpathlazy
_n$ the coordinates of $\jpathlazy_n$.

\begin{prop} \label{prop-sec-class-jdt-equiv}
For each $n\ge1$, let $u(n)$ denote the position at time $n$ of the
second-class particle in the particle system associated with the
sequence \eqref{eq:inf-path-young} via the mapping described in the
previous section, where we choose the origin of time and space such
that its initial position is $u(0)=0$.
Let $v(n)$ denote the number of times the second-class particle moved
up to time $n$.
Then we have
\[
u(n) = a_{n+1}-b_{n+1},\qquad v(n) = a_{n+1} +
b_{n+1}\qquad (n\ge 0).
\]
\end{prop}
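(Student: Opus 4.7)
The plan is to proceed by induction on $n \ge 0$, showing at each step that in Russian coordinates $(u,v)$ the second-class particle sits at the top vertex $(a_{n+1}-b_{n+1},\,a_{n+1}+b_{n+1})$ of the box $\jpathlazy_{n+1}$. For the base case $n=0$: the diagram $\Lambda_1 = (1)$ consists of the single box $(1,1)$, and Rost's mapping places the initial $*$-pair at the unique peak of the profile $\phi_{(1)}$, which lies at $(0,2)$; this matches $\jpathlazy_1 = (1,1)$.

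For the inductive step, assume the claim holds at time $n$, with the second-class particle at the top vertex $(u_0,v_0)$ of $\jpathlazy_{n+1} = (a_{n+1},b_{n+1})$. First I would analyze the local geometry via Rost's mapping: near the peak, the profile has slope $+1$ on $[u_0-1,u_0]$ (producing the $*$-hole at $u_0-\tfrac12$) and slope $-1$ on $[u_0,u_0+1]$ (the $*$-particle at $u_0+\tfrac12$). A direct case analysis of the enhanced TASE transition rules (adapted from the dictionary in Figures~\ref{fig-star-pair} and \ref{fig-second-class-transitions}) then shows that the second-class particle moves to $(u_0+1,v_0+1)$ precisely when the new box $\mathbf{d}_{n+2}$ is $(a_{n+1}+1,b_{n+1})$, moves to $(u_0-1,v_0+1)$ precisely when $\mathbf{d}_{n+2} = (a_{n+1},b_{n+1}+1)$, and otherwise stays put. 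In all three cases, the new Russian-coordinate position of the second-class particle coincides with the top vertex of the box $\jpathlazy_{n+1}$ or of the newly added box, according to the rule.

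Second, I would compare with the evolution of the lazy \jdt path. By the definition of $K(n+1)$, the next step $\jpath_{K(n+1)+1}$ is one of $(a_{n+1}+1,b_{n+1})$ or $(a_{n+1},b_{n+1}+1)$, with tableau entry exceeding $n+1$; hence $\jpathlazy$ advances at time $n+2$ iff $\mathbf{d}_{n+2}$ equals this next step, and in all other cases $\jpathlazy_{n+2} = \jpathlazy_{n+1}$. The point requiring care is to verify that the direction taken by the \jdt rule \eqref{eq:jdt-path-def} is consistent with which of the two neighbours is actually added. Suppose $\mathbf{d}_{n+2} = (a_{n+1}+1,b_{n+1})$. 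Then $(a_{n+1},b_{n+1}+1) \notin \Lambda_{n+1}$, for otherwise $\jpathlazy$ would already have advanced past $(a_{n+1},b_{n+1})$; since only one box is added at each step, also $(a_{n+1},b_{n+1}+1) \notin \Lambda_{n+2}$, so $t_{(a_{n+1},b_{n+1}+1)} > n+2 = t_{(a_{n+1}+1,b_{n+1})}$ and the \jdt rule sends the path to the right, as required. The symmetric argument handles the ``up'' case.

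Combining the two dichotomies closes the induction: $\jpathlazy_{n+2}$ advances exactly when the second-class particle moves, and both new positions agree under the top-vertex correspondence. The main obstacle is not conceptual but bookkeeping: one must simultaneously track the profile with its $\pm 1$ slopes in Russian coordinates, the configuration on the shifted lattice $\Z'$ (before and after contraction of the $*$-pair), and the box coordinates $(i,j)$ of the \jdt path. Once these three perspectives are aligned by Rost's mapping, the case analysis above is mechanical.
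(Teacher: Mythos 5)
Your proof is correct and follows essentially the same route as the paper's: induction on $n$ with a three-way case analysis matching the $*$-pair transitions (move left, move right, stay put) against which box, if any, is added adjacent to $\jpathlazy_{n+1}$. The one substantive detail you add is the explicit verification that the tie-breaking rule \eqref{eq:jdt-path-def} sends the \jdt path toward whichever neighbour is actually added (because the other neighbour cannot yet lie in $\Lambda_{n+2}$), a step the paper delegates to Figure~\ref{fig:illustration-jdt-sec-class}.
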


In words, the result says that if one considers the rotated $(u,v)$
coordinate of the natural (also known as ``lazy'') parameterization of
the \jdt path, the sequence of $u$-coordinates gives the trajectory of
the second-class particle, and the $v$-coordinates parameterize the
number of jumps of the second-class particle. In particular, the
sequence of $u$-coordinates of the ordinary (nonlazy) \jdt path
$(\jpath_k)_{k=1}^\infty$ can be interpreted as the positions of the
second-class particle after its successive jumps, in a time
parameterization in which all jumps not involving the second-class
particles do not ``move the clock.''

\begin{pf*}{Proof of Proposition \ref{prop-sec-class-jdt-equiv}}
Denote $u'(n)=a_{n+1}-b_{n+1}$ and $v'(n)=a_{n+1}+b_{n+1}$. We prove by
induction on $n$ that $u(n)=u'(n)$, $v(n)=v'(n)$.
For $n=0$, we have $(u'(0),v'(0))=(u(0),v(0))=(0,0)$. For the induction
step, it is helpful to go back to the enhanced particle system picture,
and consider the position $u(n)$ of the second-class particle at time
$n$ to be the midpoint between the positions of the $*$-hole and
$*$-particle [this is compatible with the choice of origin for which
$u(0)=0$, since in the initial state the $*$-hole and $*$-particle are
at positions $\pm\frac{1}2$]. Now consider possible changes in the
vectors $(u(n),v(n))$ and $(u'(n),v'(n))$ when we increment $n$ by $1$.
For $(u(n),v(n))$, we have that
%
\begin{eqnarray}
\label{eq:three-cases-star-pair} &&\bigl(u(n+1)-u(n), v(n+1)-v(n) \bigr)
\nonumber
\\[-8pt]
\\[-8pt]
\nonumber
&&\qquad= %
\cases{ (-1,1), & \quad$\mbox{if the $*$-pair moved left at time $n$}$,
\vspace *{2pt}
\cr
(1,1), &\quad $ \mbox{if the $*$-pair moved right at time $n$}$,
\vspace *{2pt}
\cr
(0,0), &\quad $\mbox{otherwise.}$} %
\end{eqnarray}
For $(u'(n),v'(n))$, from the definition of the \jdt path it is easy to
see that
%
\begin{eqnarray}
\label{eq:three-cases-jdt}&& \bigl(u'(n+1)-u'(n),
v'(n+1)-v'(n) \bigr)
\nonumber
\\[-8pt]
\\[-8pt]
\nonumber
&&\qquad= %
\cases{ (-1,1), &\quad $\mbox{if }\lambda_{n+2}\setminus
\lambda_{n+1} = \bigl\{ (a_{n+1},b_{n+1}+1) \bigr\}$,
\vspace*{2pt}
\cr
(1,1), &\quad $\mbox{if }\lambda_{n+2}\setminus
\lambda_{n+1} = \bigl\{ (a_{n+1}+1,b_{n+1}) \bigr\}$,
\vspace*{2pt}
\cr
(0,0), &\quad $\mbox{otherwise,}$} %
\end{eqnarray}
where $\lambda_m$ denotes the $m$th Young diagram in the Young graph
path associated with the particle system [recall that time $0$ in the
enhanced particle system corresponds to the diagram $\lambda_1=(1)$,
not $\lambda_0 = \varnothing$, which explains the discrepancy in the
indices on both sides of the equation].

Finally, as Figure~\ref{fig:illustration-jdt-sec-class} illustrates, it
is easy to see that each of the three cases in~\eqref{eq:three-cases-star-pair}
is equivalent to the corresponding case in \eqref{eq:three-cases-jdt}.
Thus, we have that
\begin{eqnarray*}
&&\bigl(u(n+1)-u(n), v(n+1)-v(n) \bigr) \\
&&\qquad= \bigl(u'(n+1)-u'(n),
v'(n+1)-v'(n) \bigr),
\end{eqnarray*}
which is just what was needed to complete the induction.\vadjust{\goodbreak}
\end{pf*}

\begin{figure}

\includegraphics{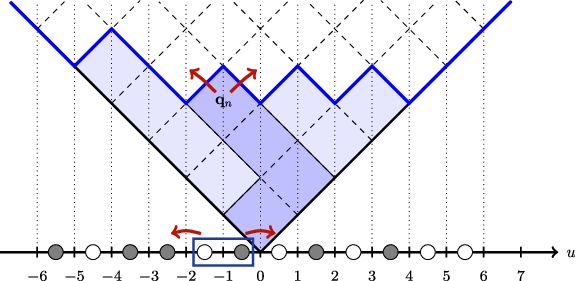}

\caption{The allowed transitions of the $*$-pair in the enhanced
particle system and the corresponding effect on the associated Young
diagram. A move of the $*$-pair to the left or right corresponds to a
north-west or north-east step, respectively, of the \jdt path in
Russian coordinates.}
\label{fig:illustration-jdt-sec-class}
\end{figure}

\subsection{Stochastic models}
\label{sec:stochastic}

We are finally ready to consider \emph{probabilistic} rules for the
evolution of a particle system equipped with a second-class particle as
described above. Thanks to the mapping taking a path on the Young graph
to such a system, it is enough to specify the rules of evolution for a
randomly growing family of Young diagrams.

\subsubsection{The Plancherel-TASEP}
Naturally, the first rule we consider is the particle system associated
with the Plancherel growth process (or, equivalently, with the
Plancherel measure). We call this the \demph{Plancherel-TASEP particle
system}; it is the process mentioned in Theorem~\ref
{thmm-second-class-plancherel-tasep} which we formulated in the
\hyperref[sec1]{Introduction} without explaining its precise meaning. Finally, we are in
a position to prove it.

\begin{pf*}{Proof of Theorem~\ref{thmm-second-class-plancherel-tasep}}
By Proposition~\ref{prop-sec-class-jdt-equiv}, the random variable
$X(n)$ in the theorem is simply the $u$-coordinate $a_{n+1}-b_{n+1}$ of
the natural parameterization $\jpathlazy_n = (a_n,b_n)$ of the \jdt
path of a random infinite Young tableau chosen according to Plancherel
measure. In the proof of Theorem~\ref{thmm-straight-line} in
Section~\ref{sec:proof-main-thms}, we already saw that after scaling by
a factor of $n^{-1/2}$, this random variable converges a.s. to a
limiting random variable $W$ having the semicircle distribution. This
was exactly the claim to prove.
\end{pf*}

\subsubsection{The TASEP}

A second natural and much-studied process is the \demph{Totally
Asymmetric Simple Exclusion Processes}, or \demph{TASEP}, introduced by
\citet{Spitzer1970} [this is a special case of the much wider family of
\emph{exclusion processes}, and we also consider here the TASEP itself
with only a specific initial state. For the general theory of such
processes, see \citet{Liggett1985}].
Here, we consider the simple random walk on the Young graph starting
from the empty diagram $\varnothing$. It is useful to let time flow
continuously, so the random walk is a process $(\Pi_t)_{t\ge0}$ taking
values in the Young graph $\allpartitions$, such that, given the state
of the walk $\Pi_t = \lambda$ at time $t$, at subsequent times the walk
randomly transitions to each state $\nu$ with $\lambda\nearrow\nu$ at
an exponential rate of $1$. Equivalently, each box in position $(i,j)$
gets added to the randomly growing diagram with an exponential rate of
$1$, as soon as both the boxes in positions $(i-1,j)$ and $(i,j-1)$ are
already included in the shape (where each of these conditions is
considered to be satisfied if $i=1$ or $j=1$, resp.).
This random walk is usually referred to as the \demph{corner growth model}.

A fundamental result for the corner growth model is the following limit
shape result, proved by \citet{Rost1981}, which is the analogue of
Theorem~\ref{thmm-plancherel-limitshape} for this model.

\begin{thmm}[(The limit shape of the corner growth model)]
Let $A_t = A_{\Pi_t}$ be the planar region associated with the random
diagram $\Pi_t$ as in \eqref{eq:diag-region}. Define
\[
L = \bigl\{ (x,y)\in[0,\infty)^2 \dvtx \sqrt{x}+\sqrt{y}\le1 \bigr
\}.
\]
Then for any $\varepsilon>0$ we have that
\[
\prob \bigl[ (1-\varepsilon)L \subseteq t^{-1} A_t
\subseteq(1+\varepsilon ) L \bigr] \to1\qquad \mbox{as }t\to\infty.
\]
\end{thmm}

\begin{figure}

\includegraphics{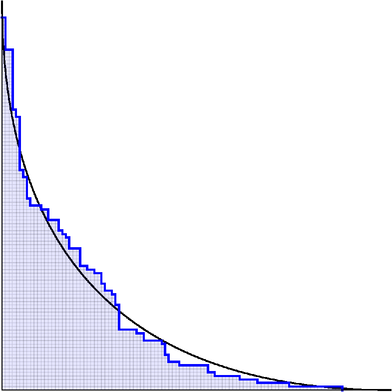}

\caption{A rescaled random Young diagram in the corner growth model and
its limit shape. The curved boundary of the limit shape is a rotated
parabola, given by the equation $\sqrt{x}+\sqrt{y}=1\ (0\le x,y\le1)$.
In Russian coordinates, it has the equation $v = \frac{1}2 (1+u^2),  |u|\le1$.}
\label{fig-cgm-limitshape}
\end{figure}

See Figure \ref{fig-cgm-limitshape} for an illustration of this result.\vadjust{\goodbreak}

Next, we can define the TASEP (without a second-class particle) as the
continuous-time interacting particle system associated with the corner
growth model via Rost's mapping described in Section~\ref{sec:rost-mapping}. This particle system follows the combinatorial TASE
rules described above for the valid particle transitions, but now in
addition the probabilistic dynamics governing these transitions are
very intuitive rules, namely that each particle can be thought of as
having a Poisson ``clock'' (independent of all others) of times during
which it attempts to jump to the right, succeeding if and only if the
space to its right is vacant. In other words, in probabilistic language
we will say that the resulting process is a Markov process with an
infinitesimal generator that can be explicitly written and encapsulates
this intuitive interpretation.

Finally, if we add the second-class particle by considering the
``enhanced'' version of Rost's mapping, we get a richer system
following the TASE rules with the additional rules governing
transitions involving the second-class particle. And again, the
probabilistic laws governing these transitions can be described in the
language of Markov processes, or equivalently in terms of each of the
first- and second-class particles having a Poisson process of times
during which it will attempt to jump.

The following result, proved by \citet{MountfordGuiol2005}, is a
precise analogue for the TASEP of Theorem~\ref
{thmm-second-class-plancherel-tasep}, and puts our own result in an
interesting context.

\begin{thmm} \label{thmm-second-class-tasep}
For $t\ge0$, let $X(t)$ denote the location at time $t$ of the
second-class particle in the TASEP with the initial conditions
described above. As $t\to\infty$, the trajectory of the second-class
particle converges almost surely to a straight line with a random
speed. More precisely, the limit
\[
U = \lim_{t\to\infty} \frac{X(t)}{t}
\]
exists almost surely and is a random variable distributed according to
the uniform distribution $U(-1,1)$.
\end{thmm}

A weaker version of Theorem~\ref{thmm-second-class-tasep} was proved
earlier by \citet{FerrariKipnis1995}. It is also worth noting here that
the study of trajectories of second-class particles in the TASEP, and
some of their higher-order generalizations (e.g., third-class,
fourth-class particles, etc.) in the process known as the \demph
{multi-species TASEP}, is an active field that has brought to light
very interesting results in the last few years; see the recent works by
\citet{AmirAngelValko2011,AngelHolroydRomik2009,
FerrariGonccalvesMartin2009,FerrariPimentel2005}. The paper by \citet
{AngelHolroydRomikVirag2007} also studies particle trajectories in the
\demph{uniformly random sorting network}, which is an interacting
particle system induced by a natural probability measure on Young
tableaux that shares some characteristics with Plancherel measure
(e.g., the semicircle distribution plays a special role in that context
as well). Angel et al. make detailed conjectural predictions about the
asymptotic behavior of particle trajectories in that model. It would be
interesting to see if some of the techniques used in the current paper
may be applicable to the study of these conjectures.

Second-class particles have also been studied recently in connection
with \demph{Hammersley's process}, an interacting particle system that
is also related to the $\RSK$ algorithm and Ulam's problem on longest
increasing subsequences. In this setting, a result on the trajectory of
second-class particles analogous to Theorem~\ref
{thmm-second-class-tasep} was proved by \citet{CollettiPimentel07}; see
also
\citeauthor{CatorGroeneboom05} (\citeyear{CatorGroeneboom05,CatorGroeneboom06}), \citet{CatorDobrynin06} for
related results, and \citet{CatorPimentel11} for a recent work
considerably generalizing the results of Coletti and Pimentel.

As a final note on the analogy between Theorem~\ref
{thmm-second-class-plancherel-tasep} and Theorem~\ref
{thmm-second-class-tasep}, we remark that the time parameterization of
the Plancherel-TASEP process is somewhat unnatural from the point of
view of tracking the second-class particle, and this is what accounts
for the scaling $n^{1/2}$ in Theorem~\ref
{thmm-second-class-plancherel-tasep}, which causes the second-class
particle to appear to slow down over time. As we mentioned briefly in
the \hyperref[sec1]{Introduction}, one can argue that it makes more sense to replace the
time parameter $t$ in \eqref{eq:second-class-rescaled} by $t^2$,
leading to particle system dynamics in which changes occur at a
constant time scale in each microscopic region (including in the
vicinity of the second-class particle). With such a parameterization,
the intuitive meaning of Theorem~\ref
{thmm-second-class-plancherel-tasep} becomes more similar to that of
Theorem~\ref{thmm-second-class-tasep}, namely that the second-class
particle moves asymptotically with a limiting speed, which is a random
variable whose distribution can be computed [i.e., $U(-1,1)$ in the
case of the TASEP; $\SClaw$ in the case of the Plancherel-TASEP].

\subsection{Competition interfaces in the corner growth model}

In the previous subsections, we reinterpreted the results on the \jdt
path of a Plancherel-random infinite Young tableau in terms of the
second-class particle in the Plancherel-TASEP particle system. One can
also go in the opposite direction, taking Theorem~\ref
{thmm-second-class-tasep} above on the behavior of a second-class
particle in the TASEP and reformulating it in the language of the
corner growth model, or equivalently, infinite Young tableaux. Indeed,
such a reformulation of Theorem~\ref{thmm-second-class-tasep} is the
central idea in the paper by \citet{FerrariPimentel2005}. While the
authors of that work do not mention Young tableaux and apparently did
not notice the connection to the \jdt path, made explicit in
Theorem~\ref{prop-sec-class-jdt-equiv} above, they phrased the result
in terms of what they call the \emph{competition interface}, which is
the boundary separating two competing growth regions in the corner
growth model. It is worth recalling this concept, which is interesting
in its own right, and noting how it interacts with our point of view.

The idea is as follows. Thinking of the diagram $\Pi_t$ as a collection
of boxes (each represented as a position in $\mathbb{N}^2$), we
decompose it into the box $(1,1)$ (assuming $t$ is large enough so that
$\Pi_t\neq\varnothing$) together with a union of boxes of two colors
\[
\Pi_t = \bigl\{(1,1) \bigr\} \cup\Pi_t^{\mathrm{green}}
\cup\Pi_t^{\mathrm{red}},
\]
so that the planar region $A_t$ associated to $\Pi_t$ is also
decomposed into a union of the regions
\begin{eqnarray*}
A_t &=& \bigl( [0,1]\times[0,1] \bigr) \cup
\biggl( \bigcup_{(i,j)\in\Pi_t^{\mathrm{green}}} [i-1,i]\times [j-1,j]
\biggr)\\
&&{} \cup \biggl( \bigcup_{(i,j)\in\Pi_t^{\mathrm{red}}} [i-1,i]
\times[j-1,j] \biggr)
\\
&=:& [0,1]^2 \cup A_t^{\mathrm{green}} \cup
A_t^{\mathrm{red}}.
\end{eqnarray*}
The color of a box $(i,j)\in\Pi_t$ is determined as follows: when the
box is added to the randomly growing Young diagram, it is classified as
green if $i=1$, red if $j=1$ [except the box $(i,j)=(1,1)$ which has no
color]; or, if $i,j\ge2$ it gets the color of that box among the two
boxes $(i,j-1)$, $(i-1,j)$ which was added to the Young diagram at the
\emph{later} time. One can think of two competing infections
propagating through the first quadrant of the plane, where a box
$(i,j)$ becomes infected at an exponential rate $1$ after the boxes
below it and to its left are already infected; once it is infected the
type of the infection (green or red) is decided according to which of
the two ``infecting'' boxes has been infected more recently than the other.

The \emph{competition interface} is defined as the boundary line
separating the green and red regions $A^t_{\mathrm{green}}$ and
$A^t_{\mathrm{red}}$; see Figure~\ref{fig-competition-interface}. As $t$
increases, this line grows by adding straight line segments in the
directions $(1,0)$ and $(0,1)$. In fact, the nature of this line is
made clear in the following result.

\begin{figure}

\includegraphics{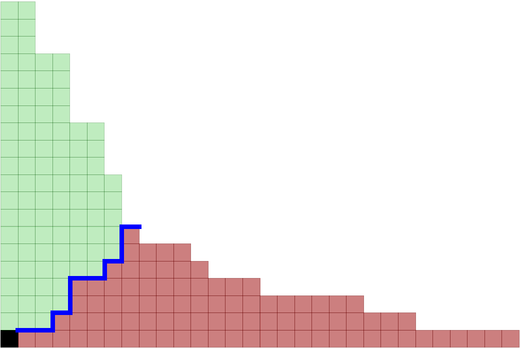}

\caption{Red and green infection regions in the corner growth model and
the competition interface.}
\label{fig-competition-interface}
\end{figure}

\begin{prop} Let $\varnothing=\nu_0\nearrow\nu_1\nearrow\nu
_2\nearrow
\ldots$ denote the sequence of Young diagrams that the corner growth
model $(\Pi_t)_{t\ge0}$ passes through.
The competition interface is the polygonal line connecting the sequence
of vertices
\[
(1,1)=\jpath_1, \jpath_2, \jpath_3, \ldots
\]
given by the \jdt path box positions $(\jpath_k)_{k=1}^\infty$
associated with the Young graph path $(\nu_n)_{n=0}^\infty$.
\end{prop}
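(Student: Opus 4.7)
The plan is to show by induction on $k$ that the $k$-th vertex of the competition interface, viewed as a lattice point in $\mathbb{N}^2$, coincides with $\jpath_k$, identifying the lattice point $(i,j)$ with the northeast corner of the box $(i,j)$ in the plane. Both the jeu de taquin path and the competition interface are sequences of lattice points starting at $(1,1)$ and advancing by unit steps $(1,0)$ or $(0,1)$, so the proof reduces to checking that at each step they make the same east-versus-north choice, and that this choice is dictated by the same comparison of two tableau entries.

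For the induction it will be convenient to carry along the auxiliary information that immediately ahead of the current vertex the color pattern is exactly what one expects. Accordingly, I would take as the induction hypothesis $H(k)$, writing $\jpath_k = (i,j)$, the conjunction of: (a) the $k$-th vertex of the interface equals $\jpath_k$; (b) every box $(a, j+1)$ with $1 \le a \le i$ is green; and (c) every box $(i+1, b)$ with $1 \le b \le j$ is red. The base case $k=1$ is immediate, since $(1,2)$ lies in the first column and is therefore green, while $(2,1)$ lies in the first row and is therefore red.

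For the induction step, at corner $\jpath_k = (i,j)$ I would focus on the diagonally adjacent box $(i+1, j+1)$. Its color is inherited from the later-added (i.e., larger-$t$-valued) of its two predecessors $(i,j+1)$ and $(i+1,j)$, which by (b) and (c) are green and red respectively. Thus $(i+1, j+1)$ is green precisely when $t_{i, j+1} > t_{i+1, j}$. To keep the colored boxes on the correct sides of its edges, the competition interface must then step east from $(i,j)$ to $(i+1,j)$ in this case; if instead $t_{i+1, j} > t_{i, j+1}$, then $(i+1, j+1)$ is red and the interface steps north to $(i, j+1)$. In both cases this is exactly the step prescribed by the definition of the jeu de taquin path, so (a) continues to hold at step $k+1$.

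The remaining work is to check that (b) and (c) propagate. In the east-step case, (b) at step $k+1$ requires every box $(a, j+1)$ with $1 \le a \le i+1$ to be green: the cases $a \le i$ come from (b) at step $k$, and the case $a = i+1$ was just established. For (c) at step $k+1$, one must verify that every box $(i+2, b)$ with $1 \le b \le j$ is red; I would prove this by a short sub-induction on $b$, using that $(i+2, 1)$ lies in the first row and that for $b \ge 2$ the box $(i+2, b)$ inherits from the later of $(i+1, b)$ (red by (c) at step $k$) and $(i+2, b-1)$ (red by sub-induction), both of which are red. The north-step case is handled symmetrically. The only conceptually delicate point in the whole argument is choosing the correct strengthening of the induction hypothesis so that the color-inheritance rule and the jeu de taquin rule mesh locally; once $H(k)$ is set up in this way, the verification reduces to a straightforward case analysis.
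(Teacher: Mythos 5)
Your proof is correct and follows the same strategy as the paper's, which likewise proceeds by induction on $k$, comparing the local transition rule of the competition interface (via the color of the box $(i+1,j+1)$, inherited from the later-added of its two neighbors) with the rule \eqref{eq:jdt-path-def} defining the jeu de taquin path. The paper dismisses the key step --- that the boxes bordering the interface from above are green and those bordering it from the right are red --- as ``easy to see,'' whereas your strengthened hypotheses (b)--(c) together with the sub-induction on the new column (resp.\ row) supply exactly that missing verification, so this is a faithful, more complete rendering of the same argument.
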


\begin{pf}
If at time $t$ the top-right endpoint of the competition interface is
in position $(a_t,b_t)$, that means that the Young diagram box indexed
by $\mathbb{N}^2$-coordinates $(a_t,b_t)$ is in $\Pi_t$ but the boxes
indexed by $(a_t+1,b_t)$ and $(a_t,b_t+1)$ are not in $\Pi_t$ (see
Figure~\ref{fig-competition-interface} for an example). Assume by
induction on $k=a_t+b_t-1$ that $\jpath_k = (a_t,b_t)$. The next step
$\jpath_{k+1}-\jpath_k$ taken by the jeu de taquin path will be $(1,0)$
or $(0,1)$ depending on which of the two boxes $(a_t+1,b_t)$ or
$(a_t,b_t+1)$ will be added to $\Pi_t$ next; it is easy to see from the
definition of the competition interface that its next step will be
determined in exactly the same way.
\end{pf}

The analogue of our Theorem~\ref{thmm-straight-line} for the corner
growth model is the following result, which is Ferrari and Pimentel's
reformulation of Theorem~\ref{thmm-second-class-tasep} in the language
of competition interfaces (which, as we observe above, is equivalent to
jeu de taquin).

\begin{thmm}[(Asymptotic behavior of the competition interface)]
The competition interface in the corner growth model converges to a
straight line with a random direction. More precisely, the limit
\[
(\cos\Phi, \sin\Phi) = \lim_{k\to\infty} \frac{\jpath_k}{\Vert
\jpath
_k\Vert}
\]
exists almost surely. The asymptotic angle $\Phi$ of the competition
interface is an absolutely continuous random variable, with distribution
\[
\prob(\Phi\le x) = \frac{\sqrt{\sin x}}{\sqrt{\sin x} + \sqrt {\cos
x}}\qquad (0\le x\le\pi/2).
\]
\end{thmm}

Figure~\ref{fig-density-comparison} shows a comparison of the density
function of $\Phi$ with that of $\Theta$, the asymptotic angle of the
\jdt path of a Plancherel-random infinite Young tableau.

\begin{figure}

\includegraphics{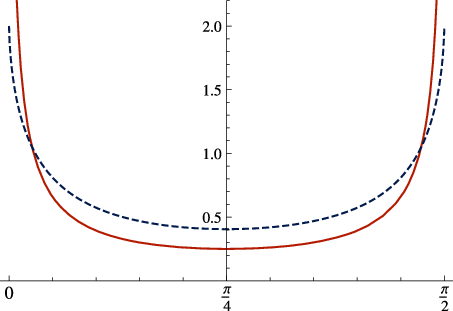}

\caption{A comparison of the density functions of $\Theta$, the
asymptotic angle of the \jdt path in a Plancherel-random infinite Young
tableau (dashed, dark blue line) and of $\Phi$ (full stroke line, in
red), the asymptotic angle of the competition interface in the corner
growth model, which can also be interpreted as a jeu de taquin path.
The density of $\Phi$ is unbounded near $0$ and $\pi/2$.}\vspace*{-5pt}
\label{fig-density-comparison}
\end{figure}

\subsection{Summary}

In the discussion above, we showed that the \jdt path arises naturally
in probabilistic settings which have not been noticed so far and which
go beyond its traditional applications to algebraic combinatorics,
namely the study of trajectories of second-class particle in
interacting particle systems and of the competition interface between
two randomly growing regions in the corner growth model. We hope that
the reader is convinced that the interplay between the different
interpretations and points of view is quite stimulating, and worthy of
further study.

\section{Additional directions}

\label{sec-additional-directions}

\subsection{Asymptotic determinism of RSK and the limit shape of the
bumping routes}

\label{sec-bumping-routes}

In a follow-up paper [\citet{RomikSniady2013}], we apply Theorem~\ref
{thmm-asymptotic-det-rsk} to prove an additional ``asymptotic
determinism'' property with more detailed information on the behavior
of RSK insertion in the random setting considered in this paper;
namely, we show that the ``bumping route'' when a deterministic input
$z$ is inserted into the insertion tableau $P_n$ (in the notation of
Theorem~\ref{thmm-asymptotic-det-rsk}) converges in the macroscopic
scaling to a limiting shape that depends only on $z$ and is given by an
explicit formula.

\subsection{$\operatorname{RSK}$ and random words in other alphabets}
\label{subsec-random-words}

It is natural to study the properties of $\RSK$ applied to an infinite
sequence $X_1,X_2,\ldots$
of i.i.d. random letters in a more general setup than the one
considered in the current paper,
that is, with the distribution of the letters being arbitrary.
The simplest example is the one in which $X_1,X_2,\ldots$ take values in
a finite set $[d]=\{1,\ldots,d\}$. In this case the random words and the
corresponding recording tableaux can be viewed as random walks in $\Z
^d$. \citet{OConnell2003} has shown that, under this identification,
$\RSK$ coincides with the \emph{generalized Pitman transform}
introduced by
\citet{OConnellYor2002}. The counterparts of some of the results of
the current paper have has been proved for the Pitman transform. This
topic is studied in a broader context, which also reveals interesting
connections with the representation theory of the infinite symmetric
group, in another follow-up paper by the second-named author [\citet
{RomikSniady2013a}].

\section*{Acknowledgements}

The authors are grateful to the anonymous referees for remarks and
suggestions that helped improve the paper.

%



\printaddresses


\begin{thebibliography}{45}


\bibitem[\protect\citeauthoryear{Amir, Angel and Valk{\'o}}{2011}]{AmirAngelValko2011}
\begin{barticle}[mr]
\bauthor{\bsnm{Amir},~\bfnm{Gideon}\binits{G.}},
\bauthor{\bsnm{Angel},~\bfnm{Omer}\binits{O.}} \AND
\bauthor{\bsnm{Valk{\'o}},~\bfnm{Benedek}\binits{B.}}
(\byear{2011}).
\btitle{The {TASEP} speed process}.
\bjournal{Ann. Probab.}
\bvolume{39}
\bpages{1205--1242}.
\bid{doi={10.1214/10-AOP561}, issn={0091-1798}, mr={2857238}}
\end{barticle}
\bptok{imsref}%
\endbibitem

\bibitem[\protect\citeauthoryear{Angel, Holroyd and Romik}{2009}]{AngelHolroydRomik2009}
\begin{barticle}[mr]
\bauthor{\bsnm{Angel},~\bfnm{Omer}\binits{O.}},
\bauthor{\bsnm{Holroyd},~\bfnm{Alexander}\binits{A.}} \AND
\bauthor{\bsnm{Romik},~\bfnm{Dan}\binits{D.}}
(\byear{2009}).
\btitle{The oriented swap process}.
\bjournal{Ann. Probab.}
\bvolume{37}
\bpages{1970--1998}.
\bid{doi={10.1214/09-AOP456}, issn={0091-1798}, mr={2561438}}
\end{barticle}
\bptok{imsref}%
\endbibitem

\bibitem[\protect\citeauthoryear{Angel et~al.}{2007}]{AngelHolroydRomikVirag2007}
\begin{barticle}[mr]
\bauthor{\bsnm{Angel},~\bfnm{Omer}\binits{O.}},
\bauthor{\bsnm{Holroyd},~\bfnm{Alexander~E.}\binits{A.~E.}},
\bauthor{\bsnm{Romik},~\bfnm{Dan}\binits{D.}} \AND
\bauthor{\bsnm{Vir{\'a}g},~\bfnm{B{\'a}lint}\binits{B.}}
(\byear{2007}).
\btitle{Random sorting networks}.
\bjournal{Adv. Math.}
\bvolume{215}
\bpages{839--868}.
\bid{doi={10.1016/j.aim.2007.05.019}, issn={0001-8708}, mr={2355610}}
\end{barticle}
\bptok{imsref}%
\endbibitem

\bibitem[\protect\citeauthoryear{Biane}{1995}]{Biane1995}
\begin{barticle}[mr]
\bauthor{\bsnm{Biane},~\bfnm{Philippe}\binits{P.}}
(\byear{1995}).
\btitle{Permutation model for semi-circular systems and quantum random walks}.
\bjournal{Pacific J. Math.}
\bvolume{171}
\bpages{373--387}.
\bid{issn={0030-8730}, mr={1372234}}
\end{barticle}
\bptok{imsref}%
\endbibitem

\bibitem[\protect\citeauthoryear{Biane}{1998}]{Biane1998}
\begin{barticle}[mr]
\bauthor{\bsnm{Biane},~\bfnm{Philippe}\binits{P.}}
(\byear{1998}).
\btitle{Representations of symmetric groups and free probability}.
\bjournal{Adv. Math.}
\bvolume{138}
\bpages{126--181}.
\bid{doi={10.1006/aima.1998.1745}, issn={0001-8708}, mr={1644993}}
\end{barticle}
\bptok{imsref}%
\endbibitem

\bibitem[\protect\citeauthoryear{Biane}{2001}]{Biane2001}
\begin{barticle}[mr]
\bauthor{\bsnm{Biane},~\bfnm{Philippe}\binits{P.}}
(\byear{2001}).
\btitle{Approximate factorization and concentration for characters of symmetric groups}.
\bjournal{Int. Math. Res. Not. IMRN}
\bvolume{4}
\bpages{179--192}.
\bid{doi={10.1155/S1073792801000113}, issn={1073-7928}, mr={1813797}}
\end{barticle}
\bptok{imsref}%
\endbibitem

\bibitem[\protect\citeauthoryear{Cator and Dobrynin}{2006}]{CatorDobrynin06}
\begin{barticle}[mr]
\bauthor{\bsnm{Cator},~\bfnm{Eric}\binits{E.}} \AND
\bauthor{\bsnm{Dobrynin},~\bfnm{Sergei}\binits{S.}}
(\byear{2006}).
\btitle{Behavior of a second class particle in {H}ammersley's process}.
\bjournal{Electron. J. Probab.}
\bvolume{11}
\bpages{670--685 (electronic)}.
\bid{doi={10.1214/EJP.v11-340}, issn={1083-6489}, mr={2242659}}
\end{barticle}
\bptok{imsref}%
\endbibitem

\bibitem[\protect\citeauthoryear{Cator and Groeneboom}{2005}]{CatorGroeneboom05}
\begin{barticle}[mr]
\bauthor{\bsnm{Cator},~\bfnm{Eric}\binits{E.}} \AND
\bauthor{\bsnm{Groeneboom},~\bfnm{Piet}\binits{P.}}
(\byear{2005}).
\btitle{Hammersley's process with sources and sinks}.
\bjournal{Ann. Probab.}
\bvolume{33}
\bpages{879--903}.
\bid{doi={10.1214/009117905000000053}, issn={0091-1798}, mr={2135307}}
\end{barticle}
\bptok{imsref}%
\endbibitem

\bibitem[\protect\citeauthoryear{Cator and Groeneboom}{2006}]{CatorGroeneboom06}
\begin{barticle}[mr]
\bauthor{\bsnm{Cator},~\bfnm{Eric}\binits{E.}} \AND
\bauthor{\bsnm{Groeneboom},~\bfnm{Piet}\binits{P.}}
(\byear{2006}).
\btitle{Second class particles and cube root asymptotics for {H}ammersley's process}.
\bjournal{Ann. Probab.}
\bvolume{34}
\bpages{1273--1295}.
\bid{doi={10.1214/009117906000000089}, issn={0091-1798}, mr={2257647}}
\end{barticle}
\bptok{imsref}%
\endbibitem

\bibitem[\protect\citeauthoryear{Cator and Pimentel}{2013}]{CatorPimentel11}
\begin{barticle}[mr]
\bauthor{\bsnm{Cator},~\bfnm{Eric}\binits{E.}} \AND
\bauthor{\bsnm{Pimentel},~\bfnm{Leandro~P.~R.}\binits{L.~P.~R.}}
(\byear{2013}).
\btitle{Busemann functions and the speed of a second class particle in the rarefaction fan}.
\bjournal{Ann. Probab.}
\bvolume{41}
\bpages{2401--2425}.
\bid{doi={10.1214/11-AOP709}, issn={0091-1798}, mr={3112921}}
\end{barticle}
\bptok{imsref}%
\endbibitem

\bibitem[\protect\citeauthoryear{Ceccherini-Silberstein, Scarabotti and Tolli}{2010}]{Ceccherini-SilbersteinScarabottiTolli2010}
\begin{bbook}[mr]
\bauthor{\bsnm{Ceccherini-Silberstein},~\bfnm{Tullio}\binits{T.}},
\bauthor{\bsnm{Scarabotti},~\bfnm{Fabio}\binits{F.}} \AND
\bauthor{\bsnm{Tolli},~\bfnm{Filippo}\binits{F.}}
(\byear{2010}).
\btitle{Representation Theory of the Symmetric Groups: The Okounkov--Vershik Approach, Character Formulas, and Partition Algebras}.
\bseries{Cambridge Studies in Advanced Mathematics}
\bvolume{121}.
\bpublisher{Cambridge Univ. Press},
\blocation{Cambridge}.
\bid{mr={2643487}}
\end{bbook}
\bptok{imsref}%
\endbibitem

\bibitem[\protect\citeauthoryear{Coletti and Pimentel}{2007}]{CollettiPimentel07}
\begin{barticle}[mr]
\bauthor{\bsnm{Coletti},~\bfnm{Cristian~F.}\binits{C.~F.}} \AND
\bauthor{\bsnm{Pimentel},~\bfnm{Leandro~P.~R.}\binits{L.~P.~R.}}
(\byear{2007}).
\btitle{On the collision between two {PNG} droplets}.
\bjournal{J.~Stat. Phys.}
\bvolume{126}
\bpages{1145--1164}.
\bid{doi={10.1007/s10955-006-9272-y}, issn={0022-4715}, mr={2312944}}
\end{barticle}
\bptok{imsref}%
\endbibitem

\bibitem[\protect\citeauthoryear{Deuschel and Zeitouni}{1999}]{DeuschelZeitouni1999}
\begin{barticle}[mr]
\bauthor{\bsnm{Deuschel},~\bfnm{Jean-Dominique}\binits{J.-D.}} \AND
\bauthor{\bsnm{Zeitouni},~\bfnm{Ofer}\binits{O.}}
(\byear{1999}).
\btitle{On increasing subsequences of {i}.{i}.{d}. samples}.
\bjournal{Combin. Probab. Comput.}
\bvolume{8}
\bpages{247--263}.
\bid{doi={10.1017/S0963548399003776}, issn={0963-5483}, mr={1702546}}
\end{barticle}
\bptok{imsref}%
\endbibitem

\bibitem[\protect\citeauthoryear{Durrett}{2010}]{Durrett2010}
\begin{bbook}[mr]
\bauthor{\bsnm{Durrett},~\bfnm{Rick}\binits{R.}}
(\byear{2010}).
\btitle{Probability: Theory and Examples},
\bedition{4th} ed.
\bpublisher{Cambridge Univ. Press},
\blocation{Cambridge}.
\bid{mr={2722836}}
\end{bbook}
\bptok{imsref}%
\endbibitem

\bibitem[\protect\citeauthoryear{Ferrari, Gon{\c{c}}alves and Martin}{2009}]{FerrariGonccalvesMartin2009}
\begin{barticle}[mr]
\bauthor{\bsnm{Ferrari},~\bfnm{Pablo~A.}\binits{P.~A.}},
\bauthor{\bsnm{Gon{\c{c}}alves},~\bfnm{Patricia}\binits{P.}} \AND
\bauthor{\bsnm{Martin},~\bfnm{James~B.}\binits{J.~B.}}
(\byear{2009}).
\btitle{Collision probabilities in the rarefaction fan of asymmetric exclusion processes}.
\bjournal{Ann. Inst. Henri Poincar\'e Probab. Stat.}
\bvolume{45}
\bpages{1048--1064}.
\bid{doi={10.1214/08-AIHP303}, issn={0246-0203}, mr={2572163}}
\end{barticle}
\bptok{imsref}%
\endbibitem

\bibitem[\protect\citeauthoryear{Ferrari and Kipnis}{1995}]{FerrariKipnis1995}
\begin{barticle}[mr]
\bauthor{\bsnm{Ferrari},~\bfnm{P.~A.}\binits{P.~A.}} \AND
\bauthor{\bsnm{Kipnis},~\bfnm{C.}\binits{C.}}
(\byear{1995}).
\btitle{Second class particles in the rarefaction fan}.
\bjournal{Ann. Inst. Henri Poincar\'e Probab. Stat.}
\bvolume{31}
\bpages{143--154}.
\bid{issn={0246-0203}, mr={1340034}}
\end{barticle}
\bptok{imsref}%
\endbibitem

\bibitem[\protect\citeauthoryear{Ferrari and Pimentel}{2005}]{FerrariPimentel2005}
\begin{barticle}[mr]
\bauthor{\bsnm{Ferrari},~\bfnm{Pablo~A.}\binits{P.~A.}} \AND
\bauthor{\bsnm{Pimentel},~\bfnm{Leandro~P.~R.}\binits{L.~P.~R.}}
(\byear{2005}).
\btitle{Competition interfaces and second class particles}.
\bjournal{Ann. Probab.}
\bvolume{33}
\bpages{1235--1254}.
\bid{doi={10.1214/009117905000000080}, issn={0091-1798}, mr={2150188}}
\end{barticle}
\bptok{imsref}%
\endbibitem

\bibitem[\protect\citeauthoryear{Fulton}{1997}]{Fulton1997}
\begin{bbook}[mr]
\bauthor{\bsnm{Fulton},~\bfnm{William}\binits{W.}}
(\byear{1997}).
\btitle{Young Tableaux: With Applications to Representation Theory and Geometry}.
\bseries{London Mathematical Society Student Texts}
\bvolume{35}.
\bpublisher{Cambridge Univ. Press},
\blocation{Cambridge}.
\bid{mr={1464693}}
\end{bbook}
\bptok{imsref}%
\endbibitem

\bibitem[\protect\citeauthoryear{Greene, Nijenhuis and Wilf}{1984}]{GreeneNijenhuisWilf1984}
\begin{barticle}[mr]
\bauthor{\bsnm{Greene},~\bfnm{Curtis}\binits{C.}},
\bauthor{\bsnm{Nijenhuis},~\bfnm{Albert}\binits{A.}} \AND
\bauthor{\bsnm{Wilf},~\bfnm{Herbert~S.}\binits{H.~S.}}
(\byear{1984}).
\btitle{Another probabilistic method in the theory of {Y}oung tableaux}.
\bjournal{J. Combin. Theory Ser. A}
\bvolume{37}
\bpages{127--135}.
\bid{doi={10.1016/0097-3165(84)90065-7}, issn={0097-3165}, mr={0757611}}
\end{barticle}
\bptok{imsref}%
\endbibitem

\bibitem[\protect\citeauthoryear{Jucys}{1974}]{Jucys1974}
\begin{barticle}[mr]
\bauthor{\bsnm{Jucys},~\bfnm{A.-A.~A.}\binits{A.-A.~A.}}
(\byear{1974}).
\btitle{Symmetric polynomials and the center of the symmetric group ring}.
\bjournal{Rep. Math. Phys.}
\bvolume{5}
\bpages{107--112}.
\bid{issn={0034-4877}, mr={0419576}}
\end{barticle}
\bptok{imsref}%
\endbibitem

\bibitem[\protect\citeauthoryear{Kerov}{1993}]{Kerov1993}
\begin{barticle}[mr]
\bauthor{\bsnm{Kerov},~\bfnm{S.~V.}\binits{S.~V.}}
(\byear{1993}).
\btitle{Transition probabilities of continual {Y}oung diagrams and the {M}arkov moment problem}.
\bjournal{Funktsional. Anal. i Prilozhen.}
\bvolume{27}
\bpages{32--49, 96}.
\bid{doi={10.1007/BF01085981}, issn={0374-1990}, mr={1251166}}
\end{barticle}
\bptok{imsref}%
\endbibitem

\bibitem[\protect\citeauthoryear{Kerov}{1999}]{Kerov1999}
\begin{binproceedings}[mr]
\bauthor{\bsnm{Kerov},~\bfnm{S.}\binits{S.}}
(\byear{1999}).
\btitle{A differential model for the growth of {Y}oung diagrams}.
In \bbooktitle{Proceedings of the {S}t. {P}etersburg {M}athematical {S}ociety, {V}ol. {IV}}.
\bseries{Amer. Math. Soc. Transl. Ser. 2}
\bvolume{188}
\bpages{111--130}.
\bpublisher{Amer. Math. Soc.},
\blocation{Providence, RI}.
\bid{mr={1732430}}
\end{binproceedings}
\bptok{imsref}%
\endbibitem

\bibitem[\protect\citeauthoryear{Kerov and Vershik}{1986}]{KerovVershik1986}
\begin{barticle}[mr]
\bauthor{\bsnm{Kerov},~\bfnm{Sergei~V.}\binits{S.~V.}} \AND
\bauthor{\bsnm{Vershik},~\bfnm{Anatol~M.}\binits{A.~M.}}
(\byear{1986}).
\btitle{The characters of the infinite symmetric group and probability properties of the {R}obinson--{S}chensted--{K}nuth algorithm}.
\bjournal{SIAM J. Algebraic Discrete Methods}
\bvolume{7}
\bpages{116--124}.
\bid{doi={10.1137/0607014}, issn={0196-5212}, mr={0819713}}
\end{barticle}
\bptok{imsref}%
\endbibitem

\bibitem[\protect\citeauthoryear{Liggett}{1985}]{Liggett1985}
\begin{bbook}[mr]
\bauthor{\bsnm{Liggett},~\bfnm{Thomas~M.}\binits{T.~M.}}
(\byear{1985}).
\btitle{Interacting Particle Systems}.
\bseries{Grundlehren der Mathematischen Wissenschaften}
\bvolume{276}.
\bpublisher{Springer},
\blocation{New York}.
\bid{doi={10.1007/978-1-4613-8542-4}, mr={0776231}}
\end{bbook}
\bptok{imsref}%
\endbibitem

\bibitem[\protect\citeauthoryear{Logan and Shepp}{1977}]{LoganShepp1977}
\begin{barticle}[mr]
\bauthor{\bsnm{Logan},~\bfnm{B.~F.}\binits{B.~F.}} \AND
\bauthor{\bsnm{Shepp},~\bfnm{L.~A.}\binits{L.~A.}}
(\byear{1977}).
\btitle{A variational problem for random {Y}oung tableaux}.
\bjournal{Adv. Math.}
\bvolume{26}
\bpages{206--222}.
\bid{doi={10.1016/0001-8708(77)90030-5}, mr={1417317}}
\end{barticle}
\bptok{imsref}%
\endbibitem

\bibitem[\protect\citeauthoryear{Mountford and Guiol}{2005}]{MountfordGuiol2005}
\begin{barticle}[mr]
\bauthor{\bsnm{Mountford},~\bfnm{Thomas}\binits{T.}} \AND
\bauthor{\bsnm{Guiol},~\bfnm{Herv{\'e}}\binits{H.}}
(\byear{2005}).
\btitle{The motion of a second class particle for the {TASEP} starting from a decreasing shock profile}.
\bjournal{Ann. Appl. Probab.}
\bvolume{15}
\bpages{1227--1259}.
\bid{doi={10.1214/105051605000000151}, issn={1050-5164}, mr={2134103}}
\end{barticle}
\bptok{imsref}%
\endbibitem

\bibitem[\protect\citeauthoryear{O'Connell}{2003}]{OConnell2003}
\begin{barticle}[mr]
\bauthor{\bsnm{O'Connell},~\bfnm{Neil}\binits{N.}}
(\byear{2003}).
\btitle{A path-transformation for random walks and the {R}obinson--{S}chensted correspondence}.
\bjournal{Trans. Amer. Math. Soc.}
\bvolume{355}
\bpages{3669--3697 (electronic)}.
\bid{doi={10.1090/S0002-9947-03-03226-4}, issn={0002-9947}, mr={1990168}}
\end{barticle}
\bptok{imsref}%
\endbibitem

\bibitem[\protect\citeauthoryear{O'Connell and Yor}{2002}]{OConnellYor2002}
\begin{barticle}[mr]
\bauthor{\bsnm{O'Connell},~\bfnm{Neil}\binits{N.}} \AND
\bauthor{\bsnm{Yor},~\bfnm{Marc}\binits{M.}}
(\byear{2002}).
\btitle{A representation for non-colliding random walks}.
\bjournal{Electron. Commun. Probab.}
\bvolume{7}
\bpages{1--12 (electronic)}.
\bid{doi={10.1214/ECP.v7-1042}, issn={1083-589X}, mr={1887169}}
\end{barticle}
\bptok{imsref}%
\endbibitem

\bibitem[\protect\citeauthoryear{Romik}{2004}]{Romik2004}
\begin{barticle}[mr]
\bauthor{\bsnm{Romik},~\bfnm{Dan}\binits{D.}}
(\byear{2004}).
\btitle{Explicit formulas for hook walks on continual {Y}oung diagrams}.
\bjournal{Adv. in Appl. Math.}
\bvolume{32}
\bpages{625--654}.
\bid{doi={10.1016/S0196-8858(03)00096-4}, issn={0196-8858}, mr={2053837}}
\end{barticle}
\bptok{imsref}%
\endbibitem

\bibitem[\protect\citeauthoryear{Romik}{2014}]{Romik2013}
\begin{bmisc}[author]
\bauthor{\bsnm{Romik},~\bfnm{D.}\binits{D.}}
(\byear{2014}).
\bhowpublished{\textit{The Surprising Mathematics of Longest Increasing
Subsequences}.
Cambridge Univ. Press, Cambridge. To appear. Available at
\url{http://www.math.ucdavis.edu/\textasciitilde romik/book}.}
\end{bmisc}
\bptok{imsref}%
\endbibitem

\bibitem[\protect\citeauthoryear{Romik and {\'S}niady}{2013}]{RomikSniady2013}
\begin{bmisc}[author]
\bauthor{\bsnm{Romik},~\bfnm{Dan}\binits{D.}} \AND
\bauthor{\bsnm{{\'S}niady},~\bfnm{Piotr}\binits{P.}}
(\byear{2013}).
\bhowpublished{Limit shapes of bumping routes in the {R}obinson--{S}chensted correspondence.
Preprint. Available at \arxivurl{arXiv:1304.7589}.}
\end{bmisc}
\bptok{imsref}%
\endbibitem

\bibitem[\protect\citeauthoryear{Rost}{1981}]{Rost1981}
\begin{barticle}[mr]
\bauthor{\bsnm{Rost},~\bfnm{H.}\binits{H.}}
(\byear{1981}).
\btitle{Nonequilibrium behaviour of a many particle process: Density profile and local equilibria}.
\bjournal{Z. Wahrsch. Verw. Gebiete}
\bvolume{58}
\bpages{41--53}.
\bid{doi={10.1007/BF00536194}, issn={0044-3719}, mr={0635270}}
\end{barticle}
\bptok{imsref}%
\endbibitem

\bibitem[\protect\citeauthoryear{Sagan}{2001}]{Sagan2001}
\begin{bbook}[mr]
\bauthor{\bsnm{Sagan},~\bfnm{Bruce~E.}\binits{B.~E.}}
(\byear{2001}).
\btitle{The Symmetric Group:
Representations, Combinatorial Algorithms, and Symmetric Functions},
\bedition{2nd} ed.
\bseries{Graduate Texts in Mathematics}
\bvolume{203}.
\bpublisher{Springer},
\blocation{New York}.
\bid{mr={1824028}}
\end{bbook}
\bptok{imsref}%
\endbibitem

\bibitem[\protect\citeauthoryear{Sch{\"u}tzenberger}{1963}]{Schutzenberger1963}
\begin{barticle}[mr]
\bauthor{\bsnm{Sch{\"u}tzenberger},~\bfnm{M.~P.}\binits{M.~P.}}
(\byear{1963}).
\btitle{Quelques remarques sur une construction de {S}chensted}.
\bjournal{Math. Scand.}
\bvolume{12}
\bpages{117--128}.
\bid{issn={0025-5521}, mr={0190017}}
\end{barticle}
\bptok{imsref}%
\endbibitem

\bibitem[\protect\citeauthoryear{Sch{\"u}tzenberger}{1977}]{Schutzenberger1977}
\begin{bincollection}[mr]
\bauthor{\bsnm{Sch{\"u}tzenberger},~\bfnm{M.-P.}\binits{M.-P.}}
(\byear{1977}).
\btitle{La correspondance de {R}obinson}.
In \bbooktitle{Combinatoire et Repr\'esentation du Groupe Sym\'etrique
({A}ctes {T}able {R}onde {CNRS}, {U}niv. {L}ouis-{P}asteur {S}trasbourg,
{S}trasbourg, 1976)}.
\bseries{Lecture Notes in Math.}
\bvolume{579}
\bpages{59--113}.
\bpublisher{Springer},
\blocation{Berlin}.
\bid{mr={0498826}}
\end{bincollection}
\bptok{imsref}%
\endbibitem

\bibitem[\protect\citeauthoryear{Sepp{\"a}l{\"a}inen}{1998}]{Seppalainen1998}
\begin{barticle}[mr]
\bauthor{\bsnm{Sepp{\"a}l{\"a}inen},~\bfnm{Timo}\binits{T.}}
(\byear{1998}).
\btitle{Large deviations for increasing sequences on the plane}.
\bjournal{Probab. Theory Related Fields}
\bvolume{112}
\bpages{221--244}.
\bid{doi={10.1007/s004400050188}, issn={0178-8051}, mr={1653841}}
\end{barticle}
\bptok{imsref}%
\endbibitem

\bibitem[\protect\citeauthoryear{Silva}{2008}]{Silva2008}
\begin{bbook}[mr]
\bauthor{\bsnm{Silva},~\bfnm{C.~E.}\binits{C.~E.}}
(\byear{2008}).
\btitle{Invitation to Ergodic Theory}.
\bseries{Student Mathematical Library}
\bvolume{42}.
\bpublisher{Amer. Math. Soc.},
\blocation{Providence, RI}.
\bid{mr={2371216}}
\end{bbook}
\bptok{imsref}%
\endbibitem

\bibitem[\protect\citeauthoryear{{\'S}niady}{2006a}]{Sniady2006}
\begin{barticle}[mr]
\bauthor{\bsnm{{\'S}niady},~\bfnm{Piotr}\binits{P.}}
(\byear{2006}a).
\btitle{Asymptotics of characters of symmetric groups, genus expansion and free probability}.
\bjournal{Discrete Math.}
\bvolume{306}
\bpages{624--665}.
\bid{doi={10.1016/j.disc.2006.02.004}, issn={0012-365X}, mr={2215589}}
\end{barticle}
\bptok{imsref}%
\endbibitem

\bibitem[\protect\citeauthoryear{{\'S}niady}{2006b}]{Sniady2006a}
\begin{barticle}[mr]
\bauthor{\bsnm{{\'S}niady},~\bfnm{Piotr}\binits{P.}}
(\byear{2006}b).
\btitle{Gaussian fluctuations of characters of symmetric groups and of {Y}oung diagrams}.
\bjournal{Probab. Theory Related Fields}
\bvolume{136}
\bpages{263--297}.
\bid{doi={10.1007/s00440-005-0483-y}, issn={0178-8051}, mr={2240789}}
\end{barticle}
\bptok{imsref}%
\endbibitem

\bibitem[\protect\citeauthoryear{{\'S}niady}{2014}]{RomikSniady2013a}
\begin{bmisc}[author]
\bauthor{\bsnm{{\'S}niady},~\bfnm{Piotr}\binits{P.}}
(\byear{2014}).
\bhowpublished{Robinson--Schensted--Knuth algorithm, jeu de taquin and
Kerov--Vershik measures of
infinite tableaux. \textit{SIAM J. Discrete Math.} To appear.
Available at \arxivurl{arXiv:1307.5645}.}
\end{bmisc}
\bptok{imsref}%
\endbibitem

\bibitem[\protect\citeauthoryear{Spitzer}{1970}]{Spitzer1970}
\begin{barticle}[mr]
\bauthor{\bsnm{Spitzer},~\bfnm{Frank}\binits{F.}}
(\byear{1970}).
\btitle{Interaction of {M}arkov processes}.
\bjournal{Adv. Math.}
\bvolume{5}
\bpages{246--290}.
\bid{issn={0001-8708}, mr={0268959}}
\end{barticle}
\bptok{imsref}%
\endbibitem

\bibitem[\protect\citeauthoryear{Stanley}{1999}]{Stanley1999}
\begin{bbook}[mr]
\bauthor{\bsnm{Stanley},~\bfnm{Richard~P.}\binits{R.~P.}}
(\byear{1999}).
\btitle{Enumerative Combinatorics. {V}ol. 2}.
\bseries{Cambridge Studies in Advanced Mathematics}
\bvolume{62}.
\bpublisher{Cambridge Univ. Press},
\blocation{Cambridge}.
\bid{doi={10.1017/CBO9780511609589}, mr={1676282}}
\end{bbook}
\bptok{imsref}%
\endbibitem


\bibitem[\protect\citeauthoryear{Vershik and Kerov}{1977}]{VervsikKerov1977}
\begin{barticle}[mr]
\bauthor{\bsnm{Vershik},~\bfnm{A.~M.}\binits{A.~M.}} \AND
\bauthor{\bsnm{Kerov},~\bfnm{S.~V.}\binits{S.~V.}}
(\byear{1977}).
\btitle{Asymptotic behavior of the {P}lancherel measure of the symmetric group and the limit form of {Y}oung tableaux}.
\bjournal{Dokl. Akad. Nauk SSSR}
\bvolume{233}
\bpages{1024--1027}.
\bid{issn={0002-3264}, mr={0480398}}
\end{barticle}
\bptok{imsref}%
\endbibitem

\bibitem[\protect\citeauthoryear{Vershik and Kerov}{1981}]{VershikKerov1981}
\begin{barticle}[mr]
\bauthor{\bsnm{Vershik},~\bfnm{A.~M.}\binits{A.~M.}} \AND
\bauthor{\bsnm{Kerov},~\bfnm{S.~V.}\binits{S.~V.}}
(\byear{1981}).
\btitle{Asymptotic theory of the characters of a symmetric group}.
\bjournal{Funktsional. Anal. i Prilozhen.}
\bvolume{15}
\bpages{15--27, 96}.
\bid{issn={0374-1990}, mr={0639197}}
\end{barticle}
\bptok{imsref}%
\endbibitem

\bibitem[\protect\citeauthoryear{Vershik and Kerov}{1985}]{VershikKerov1985}
\begin{barticle}[mr]
\bauthor{\bsnm{Vershik},~\bfnm{A.~M.}\binits{A.~M.}} \AND
\bauthor{\bsnm{Kerov},~\bfnm{S.~V.}\binits{S.~V.}}
(\byear{1985}).
\btitle{Asymptotic behavior of the maximum and generic dimensions of irreducible representations of the symmetric group}.
\bjournal{Funktsional. Anal. i Prilozhen.}
\bvolume{19}
\bpages{25--36, 96}.
\bid{issn={0374-1990}, mr={0783703}}
\end{barticle}
\bptok{imsref}%
\endbibitem


\end{thebibliography}
\end{document}